\newtheorem{thm}{Theorem}
\newtheorem{lem}[thm]{Lemma}
\def\cA{\mathcal A}
\def\fA{\mathbf A}
\def\ue{u_\varepsilon}
\def\cB{\mathcal B}
\def\B{\mathbf{B}}
\def\E{\mathbf{E}}
\def\R{\mathbb R}
\def\Z{\mathbb Z}
\def\M{\mathcal M}
\def\diam{\text{diam}}
\def\div{\operatorname{div }}
\def\supp{\operatorname{supp}}
\def\div{\operatorname{div}}
\title{\sc{Fractional differentiability for solutions \\ of nonlinear elliptic equations}
\footnotetext{\hspace{-0.35cm}
2010 \emph{Mathematics Subject Classification}. 35B65, 35J60, 42B37,49N60.
\endgraf
{\it Key words and phrases}.
Nonlinear elliptic equations, local well-posedness, higher order fractional differentiability, Besov spaces
}}
\author{\it A. L. Bais\'on, A. Clop, R. Giova, J. Orobitg, A. Passarelli di Napoli}
\date{}
\numberwithin{equation}{section}
\begin{document}
\maketitle

\begin{abstract}
We study nonlinear elliptic equations  in divergence form
$$\div\cA(x,Du)=\div G.$$
When $\cA$ has linear growth in $Du$, and assuming that $x\mapsto\cA(x,\xi)$ enjoys $B^\alpha_{\frac{n}\alpha, q}$ smoothness, local well-posedness is found in $B^\alpha_{p,q}$  for certain values of $p\in[2,\frac{n}{\alpha})$ and $q\in[1,\infty]$. In the particular case $\cA(x,\xi)=A(x)\xi$, $G=0$ and $A\in B^\alpha_{\frac{n}\alpha,q}$, $1\leq q\leq\infty$, we obtain $Du\in B^\alpha_{p,q}$ for each $p<\frac{n}\alpha$. Our main tool in the proof is a more general result, that holds also if $\cA$ has growth $s-1$ in $Du$, $2\leq s\leq n$, and asserts local well-posedness in $L^q$ for each $q>s$, provided that $x\mapsto\cA(x,\xi)$ satisfies a locally uniform $VMO$ condition.
 \end{abstract}

\section{Introduction}

The main purpose of this paper consists of analyzing the extra \emph{fractional} differentiability of weak solutions of the following nonlinear elliptic equations in divergence form,
\begin{equation}\label{defeq}
\div \cA(x,Du)= \div G  \qquad\qquad \mathrm{in} \,\,\Omega,
\end{equation}
where $\Omega\subset\R^n$ is a domain, $u:\Omega\to \R $, $ G :\Omega\to  \R^{n} $, and $\cA:\Omega\times \R^{n} \to  \R^{n} $ is a {\emph{Carath\'eodory function with linear growth}}. This means that there are constants $\ell,L,\nu>0$ and $0\leq \mu\leq 1$ such that
\begin{enumerate}
\item[{\rm ($\cA1$)}] $\langle \cA(x,\xi)-\cA(x,\eta),\xi-\eta\rangle \geq  |\xi-\eta|^2$,
\item[{\rm ($\cA2$)}] $|\cA(x,\xi)-\cA(x,\eta)|\leq L  |\xi-\eta|$,
\item [{\rm ($\cA3$)}]$|\cA(x,\xi)|\leq \ell (\mu^2+|\xi|^2)^{\frac{1}{2}}$,
\end{enumerate}
for every $\xi,\eta\in \R^n$ and for a.e. $x\in\Omega$.\\
\\
It is clear that no extra differentiability can be expected for solutions, even if $ G $ is smooth, unless some assumption is given on the $x$-dependence of $\cA$. Thus, we wish to find conditions on $\cA$ under which fractional differentiability assumptions on $G$ transfer to $Du$ \emph{with no losses in the order of differentiation}. \\
\\
The regularity theory for elliptic equations goes back to the seminal works by de Giorgi, Nash and Moser on H\"older continuity of weak solutions. Later on, for linear equations, Meyers found the existence of a number $p_0(n,\nu,L,\ell)$ such that a priori $L^p$ estimates for the gradient hold whenever $p_0'<p<p_0$. In both cases, no regularity for the coefficients is needed (other than measurability). Also, both the $C^\alpha$ and the $L^p$ theory have been extended to  nonlinear Carath\'eodory functions $\cA$ not necessarily having linear growth (we refer the interested reader to the monographs \cite{Gia} and \cite{Giusti} for a complete treatment of the subject). If one seeks for higher differentiability results, then extra assumptions are needed on the coefficients. The classical Schauder estimates are a typical example of this fact, and can be used to prove that H\"older regularity on the independent term $G$ transfers to the gradient $ D  u$ in a nice way, provided the dependence $x\mapsto\cA(x,\xi)$ is also H\"older. A very precise and unified description of such phenomenon can be found at Kuusi-Mingione \cite{KM2}.\\
\\
Even though there is an extensive literature on the regularity theory for equations like \eqref{defeq}, recent works in the planar situation, $n=2$, have shown a renovated interest in determining the higher differentiability of solutions in terms of that of the datum and the coefficients. So far, especial attention has been driven to the case of fractional  Sobolev spaces $W^{\alpha, p}$. It turns out that remarkable differences are appreciated, depending on the quantity $\alpha p$:
\begin{itemize}
\item If $\alpha p > 2$, then $G\in W^{\alpha,q}$ implies $Du\in W^{\alpha,q}$ whenever $q\leq p$ (see e.g. references \cite{CMO} and \cite{KM}).
\item If $\alpha p = 2$, then  $G\in W^{\alpha,q}$ implies $Du\in W^{\alpha, q}$ for every $q<p$, but not if $q=p$. The reason is that coefficients in $W^{\alpha,\frac{2}{\alpha}}_{loc}$ do not necessarily imply bounded derivative solutions. Precise results in this direction are given in \cite{CFMOZ} (for $\alpha=1$) or \cite{BCO} (for $0<\alpha<1$).
\item If $\alpha p<2$, then $G\in W^{\alpha, q}$ implies $Du\in W^{\alpha,q}$ for $q<q_0$ where $q_0$ depends on the ellipticity constants, and is such that $q_0<p$. See for instance \cite{CFMOZ} for the case $\alpha=1$, and \cite{CFR} for $0<\alpha<1$.
\end{itemize}
The results mentioned above refer to the planar Beltrami equation, which is equivalent to $\cA(x,\xi)=A(x)\,\xi$ for some $A(x)$ which is symmetric and has determinant $1$. \\
\\
It turns out that similar phenomena  seem to occur in higher dimensions. Indeed, recent developments  for  nonlinear equations suggest that linearity should not be a restriction, as appropriate counterparts hold even if $\cA$ has superlinear growth, see for instance \cite{raffaella}, \cite{APdN1} and \cite{PdN}. In these works, higher differentiability is obtained from a pointwise condition on the partial map $\cA$.  More precisely, for Carath\'eodory functions $\cA$ with linear growth, it is assumed that there exists a non negative function $g\in L^n_{loc }(\Omega)$ such that 
\begin{equation}\label{hajlasz}
|\cA(x,\xi)-\cA(y,\xi)|\leq |x-y|\,(g(x)+g(y))\,(\mu^2+|\xi|^2)^\frac{1}{2},  
\end{equation}
for almost every $x,y\in\Omega$, and every $\xi\in\R^n$. Under this condition, solutions to \eqref{defeq} with $G=0$ are shown in \cite{PdN} to be such that $Du\in W^{1,p}_{loc}$ for every $p<2$. As a first fractional counterpart to this result, instead of \eqref{hajlasz} one can assume that  there is $g\in L^\frac{n}\alpha_{loc}(\Omega)$ such that
\begin{equation}\label{hajlasz2}
|\cA(x,\xi)-\cA(y,\xi)|\leq |x-y|^\alpha\,(g(x)+g(y))\,(\mu^2+|\xi|^2)^\frac{1}{2},
\end{equation}
for almost every $x,y\in\Omega$, and every $\xi\in\R^n$. It turns out that one gets improved regularity for solutions measured in terms of the Besov spaces $B^\alpha_{p,q}$. 

\begin{thm}\label{maintriebel}
Let $0<\alpha<1$. Assume that $\cA$ satisfies $(\cA1),(\cA2),(\cA3)$,  and that \eqref{hajlasz2} holds for some $g\in L^\frac{n}\alpha$. There exists $p_0=p_0(n,\nu,\ell)>2$ such that if $u\in W^{1,2}_{loc}$ is a weak solution of 
$$\div \cA(x, Du)=0$$
then $Du\in B^\alpha_{p,\infty}$, locally, whenever $2\leq p<\min\{\frac{n}{\alpha}, p_0\}$. If $\cA(x,\xi)=A(x)\xi$ then $2\leq p<\frac{n}{\alpha}$ suffices.
\end{thm}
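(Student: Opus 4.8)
\emph{Proof plan.} The engine is a difference--quotient argument; the integrability needed to run it comes from the general $L^q$ well-posedness result announced above, applied with $s=2$. First I would check that \eqref{hajlasz2} forces the locally uniform $VMO$ hypothesis of that result on $x\mapsto\cA(x,\xi)$: averaging \eqref{hajlasz2} twice over a ball $B_r\Subset\Omega$ and using H\"older's inequality,
$$\frac1{|B_r|}\int_{B_r}\Bigl|\cA(x,\xi)-(\cA(\cdot,\xi))_{B_r}\Bigr|\,dx\ \le\ C\,r^\alpha\Bigl(\frac1{|B_r|}\int_{B_r}g\Bigr)(\mu^2+|\xi|^2)^{\frac12}\ \le\ C\,\|g\|_{L^{n/\alpha}(B_r)}\,(\mu^2+|\xi|^2)^{\frac12},$$
and the last quantity $\to0$ as $r\to0$ by absolute continuity of the integral. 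Hence $Du\in L^q_{\mathrm{loc}}(\Omega)$ for every $q\ge2$; in particular $Du$ belongs to every $L^q_{\mathrm{loc}}$ with $q<\infty$.

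Fix $B_r\Subset\Omega$, $\eta\in C_c^\infty(B_r)$ and $h\in\R^n$ with $|h|$ small, and set $u_h:=u(\cdot+h)-u\in W^{1,2}(B_r)$. Subtracting the weak form of $\div\cA(x,Du)=0$ from its $h$-translate, testing with $\varphi=\eta^2\bigl(u_h-(u_h)_{B_r}\bigr)$, and writing $\cA(x+h,Du(x+h))-\cA(x,Du(x))$ as
$$\bigl[\cA(x+h,Du(x+h))-\cA(x+h,Du(x))\bigr]+\bigl[\cA(x+h,Du(x))-\cA(x,Du(x))\bigr],$$
I estimate the first bracket (paired with $\eta^2Du_h$) from below by $\eta^2|Du_h|^2$ via $(\cA1)$, control its pairing with the lower-order part of $D\varphi$ by $(\cA2)$, and bound the second bracket by $|h|^\alpha\,\mathcal G^*_h$ with $\mathcal G^*_h:=\bigl(g(\cdot+h)+g\bigr)(\mu^2+|Du|^2)^{1/2}$ via \eqref{hajlasz2}. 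Young's inequality then gives the Caccioppoli inequality for second differences,
$$\int_{B_{r/2}}|Du_h|^2\,dx\ \le\ \frac{C}{r^2}\int_{B_r}\bigl|u_h-(u_h)_{B_r}\bigr|^2\,dx\ +\ C\,|h|^{2\alpha}\int_{B_r}(\mathcal G^*_h)^2\,dx .$$
Since $\|u_h\|_{L^2(B_r)}\le|h|\,\|Du\|_{L^2(B_{2r})}$ and $|h|<r$, the first term is $\le C|h|^{2\alpha}$; by H\"older's inequality, $g\in L^{n/\alpha}$ and $Du\in L^{2n/(n-2\alpha)}_{\mathrm{loc}}$ (previous step), the second term is $\le C|h|^{2\alpha}$ as well. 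As $0<\alpha<1$, first differences characterise $B^\alpha_{2,\infty}$, so $\sup_h|h|^{-\alpha}\|Du_h\|_{L^2(B_{r/2})}<\infty$ means $Du\in B^\alpha_{2,\infty}$ locally; this settles the case $p=2$.

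To reach $p>2$ I normalise: put $w_h:=u_h/|h|^\alpha$, divide the Caccioppoli inequality by $|h|^{2\alpha}$, and apply the Sobolev--Poincar\'e inequality to the first term on the right (with $(w_h)_{B_r}=(u_h)_{B_r}/|h|^\alpha$); this turns the estimate into a reverse H\"older inequality
$$\frac1{|B_{r/2}|}\int_{B_{r/2}}|Dw_h|^2\,dx\ \le\ C\Bigl(\frac1{|B_r|}\int_{B_r}|Dw_h|^{2_\ast}\,dx\Bigr)^{2/2_\ast}+\ C\,\frac1{|B_r|}\int_{B_r}(\mathcal G^*_h)^2\,dx,\qquad 2_\ast=\tfrac{2n}{n+2},$$
valid on all concentric pairs $B_{r/2}\subset B_r\Subset\Omega$ with $|h|$ small relative to $r$, and \emph{with constants independent of $h$}. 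Gehring's lemma (in the version that permits a right-hand side merely in some $L^m$, $m<\infty$) then produces $p_0=p_0(n,\nu,\ell,L)>2$, independent of $h$, such that $\|Dw_h\|_{L^p(B_{r/2})}\le C$ uniformly in $h$ for every $2\le p<\min\{p_0,n/\alpha\}$ --- the ceiling $n/\alpha$ being dictated by $\mathcal G^*_h\in L^m_{\mathrm{loc}}$ for all $m<n/\alpha$ (first step and $g\in L^{n/\alpha}$). Undoing the scaling, $\sup_h|h|^{-\alpha}\|Du_h\|_{L^p(B_{r/2})}<\infty$, i.e. $Du\in B^\alpha_{p,\infty}$ locally, for such $p$.

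If $\cA(x,\xi)=A(x)\xi$ the difference equation is genuinely linear, $\div\bigl(A(\cdot+h)\,Dw_h\bigr)=\div F_h$ with $|F_h|\le\bigl(g(\cdot+h)+g\bigr)|Du|$, and $A(\cdot+h)$ carries the same locally uniform $VMO$ modulus as $A$; the Calder\'on--Zygmund estimates for linear divergence-form equations with $VMO$ coefficients, valid for \emph{every} exponent, then give $\|Dw_h\|_{L^p(B_{r/2})}\le C$ uniformly in $h$ for all $p<n/\alpha$, whence $2\le p<n/\alpha$ suffices. The one genuinely delicate point throughout is the uniformity in $h$: both the constants in the reverse H\"older inequality and the Gehring exponent $p_0$ must be independent of $h$, which forces the Gehring iteration to be run along a chain of concentric balls all comparable to a fixed radius $r$, with $|h|$ kept small relative to $r$ so that the translates remain inside $\Omega$; and it is precisely this ceiling, inherited from the (unchanged) ellipticity of the difference equation, that limits the nonlinear case to $p<p_0$, whereas the linear equation enjoys unrestricted $L^p$ bounds.
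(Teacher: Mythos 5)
Your argument is correct and shares the paper's overall skeleton (verify that \eqref{hajlasz2} forces the locally uniform $VMO$ condition, invoke Theorem \ref{mainvmo} with $s=2$, $G=0$ to get $Du\in L^r_{loc}$ for all finite $r$, then run a difference--quotient scheme uniformly in $h$), but the crucial step --- the uniform $L^p$ bound for $Dv_h$ with $v_h=\Delta_hu/|h|^\alpha$ and some $p>2$ --- is obtained by a genuinely different mechanism. The paper views $v_h$ as a weak solution of $\div\cA_h(x,Dv_h)=\div G_h$ with $|G_h|\le(g(\cdot+h)+g)(\mu^2+|Du|^2)^{1/2}\in L^p_{loc}$ and applies the non-standard Caccioppoli inequality of Lemma \ref{nonstandardcaccioppoli}, which is proved at the level of the exponent $p$ itself via the stability of the Hodge decomposition (Lemma \ref{hodget}, after Iwaniec--Sbordone); this requires knowing a priori that $v_h\in W^{1,p}_{loc}$ (supplied by Theorem \ref{mainvmo}) and yields an estimate linear in the data with $p_0$ tied to the smallness of $\max\{p-2,p'-2\}$. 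You instead establish only the $L^2$ difference--quotient Caccioppoli inequality, convert it by Sobolev--Poincar\'e into a reverse H\"older inequality for $|Dw_h|^2$ with constants independent of $h$, and let Gehring's lemma self-improve to $L^p$, $p<p_0$. Your route is more elementary (no Riesz transforms, no Hodge decomposition) and is self-improving, so it does not need the a priori $W^{1,p}$ regularity of $v_h$ at the exponent $p$; what it gives up is the explicit, data-linear form of the $L^p$ estimate and the specific (Astala--Iwaniec--Saksman type) exponent $p_0$, which the paper reuses verbatim for the inhomogeneous Theorems \ref{mainBesov} and \ref{mainBesovlinear}, where the third Besov index $q$ forces one to take an $L^q(dh/|h|^n)$ norm of the $h$-dependent estimate rather than a supremum. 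Your treatment of the linear case and your insistence on the $h$-uniformity of the reverse H\"older constants and of the Gehring exponent are exactly the right points to flag, and both match the paper's conclusions.
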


\noindent
See Section \ref{prelim} for the definition of $B^\alpha_{p,q}$ and the meaning of \emph{locally}. Theorem \ref{maintriebel} holds even for some values $p> 2$, that is, different than the natural summability for $\cA$. The reason for this is a non-standard version of the Caccioppoli inequality, see Lemma \ref{nonstandardcaccioppoli} in Section \ref{prelim}. At the same time, Theorem \ref{maintriebel} seems to be in contrast with the results at \cite{PdN}. 
Indeed, condition \eqref{hajlasz} fully describes equations with coefficients in the Sobolev space $W^{1,n}$, that is, the Triebel-Lizorkin space $F^1_{n,2}$ (see \cite{KYZ} for details), and so in \cite{PdN} the Triebel-Lizorkin scale is nicely transferred from coefficients to solutions. Nevertheless, it is worth mentioning here that there is a jump between \eqref{hajlasz} and \eqref{hajlasz2}, since \eqref{hajlasz2} says that $A\in F^\alpha_{\frac{n}\alpha,\infty}$ (see \cite{KYZ} for details). \\
\\
Unfortunately, the method does not seem to extend to the existing counterparts of \eqref{hajlasz2} that characterize coefficients in $F^\alpha_{\frac{n}\alpha,q}$ for finite values of $q$. Somewhat surprisingly, the Besov setting fits better in this context. To be precise, given $0<\alpha<1$ and $1\leq q\leq \infty$ we say that {\emph{$(\cA4)$ is satisfied}} if there exists a sequence of measurable non-negative functions $g_k\in L^\frac{n}{\alpha}(\Omega)$ such that 
$$\sum_k\|g_k\|_{L^\frac{n}\alpha(\Omega)}^q<\infty,$$
and at the same time
$$|\cA(x,\xi)-\cA(y,\xi)|\leq |x-y|^{\alpha}\,\left(g_k(x )+g_k(y)\right)\,(1+|\xi|^2)^\frac{1}{2}$$
for each $\xi\in\R^n$ and almost every $x,y\in\Omega$ such that $2^{-k}\leq|x-y|<2^{-k+1}$. We will shortly write then that $(g_k)_k\in\ell^q(L^\frac{n}\alpha)$. If $\cA(x,\xi)= A(x)\xi$ and $\Omega=\R^n$ then $(\cA4)$ says that the entries of $A$ belong to $B^\alpha_{\frac{n}\alpha,q}$, see \cite[Theorem 1.2]{KYZ}.
Under $(\cA4)$, we can prove the following result. 

\begin{thm}\label{mainBesovhomog}
Let $0<\alpha<1$ and $1\leq q\leq \infty$, and assume that $\cA$ satisfies $(\cA1),(\cA2),(\cA3),(\cA4)$. There exists a number $p_0=p_0(n,\nu,\ell)>2$ such that if $u\in W^{1,2}_{loc}(\Omega)$ is a weak solution of 
$$\div \cA(x, Du)=0$$
then $Du\in B^\alpha_{p,q}$ provided that  $2\leq p<\min\{\frac{n}{\alpha},p_0\}$. If $\cA(x,\xi)=A(x)\xi$ then $2\leq p<\frac{n}\alpha$ suffices.
\end{thm}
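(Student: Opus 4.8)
The strategy is the classical difference-quotient method adapted to the Besov scale, exploiting the non-standard Caccioppoli inequality (Lemma \ref{nonstandardcaccioppoli}) to gain control on $L^p$ norms of $Du$ for $p$ slightly above $2$. Fix a ball $B_{2R}\Subset\Omega$ and, for $h\in\R^n$ small, write $\tau_h v(x)=v(x+h)-v(x)$. Testing the equation satisfied by $u$ and by its translate $u(\cdot+h)$ against $\eta^2\tau_h u$ with $\eta$ a cutoff, and using ellipticity $(\cA1)$ together with the growth bound $(\cA2)$, one arrives at an estimate of the form
\begin{equation*}
\int_{B_R}|\tau_h Du|^2\,dx\;\lesssim\;\frac{1}{R^2}\int_{B_{2R}}|\tau_h u|^2\,dx\;+\;\int_{B_{2R}}|\tau_h\cA(\cdot,Du(x+h))|\,|D(\eta^2\tau_h u)|\,dx,
\end{equation*}
where $\tau_h\cA(x,\xi):=\cA(x+h,\xi)-\cA(x,\xi)$ is the ``frozen-variable'' increment, which by $(\cA4)$ is bounded by $|h|^\alpha(g_k(x+h)+g_k(x))(1+|Du(x+h)|^2)^{1/2}$ as soon as $2^{-k}\le|h|<2^{-k+1}$.

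The main point is to turn this into a Besov estimate. First I would upgrade the integrability: applying Lemma \ref{nonstandardcaccioppoli} (the non-standard Caccioppoli inequality), together with a Gehring-type self-improvement, yields $Du\in L^{p_0}_{loc}$ for some $p_0=p_0(n,\nu,\ell)>2$, which is exactly the range appearing in the statement. With $p\in[2,\min\{n/\alpha,p_0\})$ fixed, I would reabsorb the gradient term and use Hölder's inequality with exponents tied to $p$, $p_0$ and $n/\alpha$ to bound the right-hand side by
\begin{equation*}
|h|^\alpha\,\bigl(\|g_k\|_{L^{n/\alpha}(B_{2R})}\bigr)\,\bigl(1+\|Du\|_{L^{p_0}(B_{2R})}\bigr)\,\|\tau_h Du\|_{L^{p}(B_R)}^{\theta}\,\cdots
\end{equation*}
for suitable exponents, so that after absorption one gets $\|\tau_h Du\|_{L^p(B_R)}\lesssim |h|^\alpha\,\|g_k\|_{L^{n/\alpha}}$ uniformly for $2^{-k}\le|h|<2^{-k+1}$, plus a harmless term coming from $\tau_h u$ that contributes order $|h|^{\alpha}$ as well (since $Du\in L^p$ already gives $u\in B^1_{p,\infty}\hookrightarrow B^\alpha_{p,q}$-type control at that scale). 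Finally I would pass from these dyadic-annulus bounds to membership in $B^\alpha_{p,q}$ by summing: using the characterization of $B^\alpha_{p,q}$ via $\bigl\|\,|h|^{-\alpha}\tau_h Du\,\bigr\|_{L^p}$ in $L^q(\tfrac{dh}{|h|^n})$, a dyadic decomposition $\R^n=\bigcup_k\{2^{-k}\le|h|<2^{-k+1}\}$ reduces the seminorm to $\sum_k\|g_k\|_{L^{n/\alpha}}^q<\infty$, which is precisely hypothesis $(\cA4)$; for $q=\infty$ the supremum version is used instead.

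In the linear case $\cA(x,\xi)=A(x)\xi$ one does not need $(\cA3)$'s role in the reabsorption in the same way, and the right-hand side carries $(1+|Du(x+h)|^2)^{1/2}$ replaced effectively by $|Du(x+h)|$, so the Hölder exponent balance works for every $p<n/\alpha$ without the constraint $p<p_0$; this is where the improved range in the second assertion comes from. The step I expect to be the genuine obstacle is the reabsorption together with the integrability gain: one must ensure that the extra power of $|Du|$ produced by $(\cA4)$ (through the factor $(1+|Du|^2)^{1/2}$) is controlled by the $L^{p_0}$ bound with room to spare, i.e. that $p$ can be taken up to $\min\{n/\alpha,p_0\}$ and not less, and that the constants do not blow up as $p\to p_0$. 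Tracking the dependence of all constants on $R$ (to get genuinely local estimates) and making the dyadic summation uniform in the scale $k$ are the technical heart of the argument.
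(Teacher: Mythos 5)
Your overall architecture (difference quotients, Caccioppoli, dyadic summation over the annuli $2^{-k}\le|h|<2^{-k+1}$ against $(\cA4)$) is the right shape, and the final summation step is exactly what the paper does. But there are two genuine gaps in the middle, and they are precisely the points where the paper has to work.

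First, testing the equation and its translate against $\eta^2\Delta_h u$ only produces an $L^2$ bound on $\Delta_h Du$, whereas the theorem needs $\|\Delta_h Du\|_{L^p}\lesssim|h|^\alpha$ for $p$ up to $\min\{\frac{n}\alpha,p_0\}$. Your plan to recover the $L^p$ rate by H\"older/interpolation against an $L^{p_0}$ bound on $Du$ cannot work: interpolating $\|\Delta_h Du\|_{L^2}\lesssim|h|^\alpha$ with a mere $h$-independent $L^{p_0}$ bound gives $\|\Delta_h Du\|_{L^p}\lesssim|h|^{\alpha\theta}$ with $\theta<1$, i.e.\ a strict loss in the order of differentiability. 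The paper avoids this by observing that $v_h=\Delta_h u/|h|^\alpha$ is itself a weak solution of a new elliptic equation $\div\cA_h(x,Dv_h)=\div G_h$ (with $\cA_h$ still satisfying $(\cA1)$--$(\cA3)$ with $\mu=0$), and then applying the Hodge-decomposition Caccioppoli inequality of Lemma \ref{nonstandardcaccioppoli} \emph{to that equation}, which delivers the $L^p$ estimate on $Dv_h$ directly for all $p\in(p_0',p_0)$ with no interpolation loss; this is also where the constant $p_0$ in the statement actually comes from (it is not a Gehring exponent). Second, to place $G_h\sim(g_k(\cdot+h)+g_k)(1+|Du|^2)^{1/2}$ in $L^p$ with $g_k\in L^{n/\alpha}$ you need $(1+|Du|^2)^{1/2}\in L^{p^\ast_\alpha}_{loc}$ with $p^\ast_\alpha=\frac{np}{n-\alpha p}$, which is arbitrarily large as $p\to\frac{n}\alpha$; a Gehring self-improvement to $L^{p_0}$ with $p_0$ slightly above $2$ is nowhere near enough. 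The paper gets this from Theorem \ref{mainvmo}: since $(\cA4)$ implies that $x\mapsto\cA(x,\xi)$ is locally uniformly in $VMO$ (Lemma \ref{besovimpliesvmo}) and the equation is homogeneous, $Du\in L^r_{loc}$ for every finite $r$. This $VMO$ machinery is the load-bearing ingredient your proposal omits. It is also what explains the improved range in the linear case: there $\cA_h(x,\xi)=A(x+h)\xi$ is again linear with $VMO$ coefficients, so the Iwaniec--Sbordone $L^p$ theory applies for all $1<p<\infty$ and the restriction $p<p_0$ disappears; it is not a matter of a better H\"older balance.
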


\noindent
Theorem \ref{mainBesovhomog} extends Theorem \ref{maintriebel}, because \eqref{hajlasz2} implies $(\cA4)$ (indeed $F^\alpha_{\frac{n}\alpha,\infty}\subset B^\alpha_{\frac{n}\alpha,\infty}$). The situation changes drastically if one looks at the inhomogeneous equation \eqref{defeq}. Difficulties appear mainly with the third index $q$, due to the fact that if $1\leq p<\frac{n}\alpha$ and $p^\ast_\alpha=\frac{np}{n-\alpha p}$ then the embedding $B^\alpha_{p,q}\subset L^{p^\ast_\alpha}$ only holds if $1\leq q\leq p^\ast_\alpha$, and fails otherwise. We obtain the following result. 

\begin{thm}\label{mainBesov}
Let $0<\alpha<1$, and $1\leq q\leq \infty$. Assume that $\cA$ satisfies $(\cA1),(\cA2),(\cA3), (\cA4) $. There exists a number $p_0=p_0(n,\nu,\ell)>2$ such that the implication
$$G\in B^\alpha_{p,q} \hspace{1cm}\Rightarrow\hspace{1cm}Du\in B^\alpha_{p,q}$$
holds locally, provided that  $ \max\{p_0',\frac{nq}{n+\alpha q}\}<p<\min\{\frac{n}{\alpha}, p_0\}$ and $u$, $G$ satisfy \eqref{defeq}.
\end{thm}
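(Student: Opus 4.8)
The plan is to reduce the inhomogeneous problem to the homogeneous one already settled in Theorem \ref{mainBesovhomog}, by freezing the datum $G$ and treating it as a perturbation. First I would fix a ball $B=B(x_0,2R)\Subset\Omega$ and, using the $L^p$ Calder\'on--Zygmund theory for $\cA$ (available because $(\cA4)$ for finite $q$ forces a locally uniform $VMO$ condition on $x\mapsto\cA(x,\xi)$, and by the $L^q$ well-posedness result announced in the abstract), establish the self-improving bound $Du\in L^p_{loc}$ for the admissible range $p_0'<p<p_0$; here the lower bound $p>p_0'$ is exactly what lets us invoke Meyers-type estimates on the inhomogeneous equation. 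Next, for each $h\in\R^n$ with $|h|$ small I would test the difference of the equations for $u$ and its translate $\tau_h u$ with a cut-off times $\tau_h u-u$, obtaining a Caccioppoli-type inequality for finite differences: the left-hand side controls $\int_{B_R}\eta^2|\tau_h Du-Du|^2$ via $(\cA1)$, while on the right-hand side three terms appear — one coming from $\tau_h G-G$, one from the $x$-oscillation of $\cA$ (controlled by $(\cA4)$ through the functions $g_k$ with $2^{-k}\sim|h|$), and the usual one involving $\nabla\eta$.

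The core of the argument is then to convert these first-order difference estimates into a Besov bound with the \emph{same} index $\alpha$. I would use the characterization of $B^\alpha_{p,q}$ by the $L^p$-modulus of smoothness, i.e. $\|v\|_{B^\alpha_{p,q}}\sim\|v\|_{L^p}+\big(\sum_k (2^{k\alpha}\|\tau_{h}v-v\|_{L^p})^q\big)^{1/q}$ for $|h|\sim 2^{-k}$ (dyadic average over $h$), as recalled in Section \ref{prelim}. Plugging the Caccioppoli difference inequality into this sum, the $\nabla\eta$ term produces $\|Du\|_{L^p}$ which is already controlled; the $(\cA4)$ term produces, after H\"older with exponents $\frac n\alpha$ and $p^\ast_\alpha$, a factor $\|g_k\|_{L^{n/\alpha}}$ times $\|Du\|_{L^{p^\ast_\alpha}}$, and since $(g_k)_k\in\ell^q(L^{n/\alpha})$ this is summable in $\ell^q$ precisely when $\|Du\|_{L^{p^\ast_\alpha}}$ is finite; finally the $G$ term produces $2^{k\alpha}\|\tau_h G-G\|_{L^p}$, whose $\ell^q$ norm is exactly $\|G\|_{B^\alpha_{p,q}}$ by hypothesis. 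The constraint $p>\frac{nq}{n+\alpha q}$ enters here: it is equivalent to $q\le p^\ast_\alpha$, which is exactly the range in which the embedding $B^\alpha_{p,q}\hookrightarrow L^{p^\ast_\alpha}$ holds, so that the quantity $\|Du\|_{L^{p^\ast_\alpha}}$ appearing on the right can be absorbed or bootstrapped from the Besov information itself.

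This last point is the main obstacle: one has a genuine fixed-point/bootstrap structure, since the $B^\alpha_{p,q}$ norm of $Du$ we are trying to bound reappears on the right-hand side (through the embedding into $L^{p^\ast_\alpha}$) multiplied by $\|(g_k)\|_{\ell^q(L^{n/\alpha})}$. I would handle this exactly as in the homogeneous case (Theorem \ref{mainBesovhomog}): first prove an a priori estimate assuming $Du\in B^\alpha_{p,q}$ locally, with the dangerous term carrying a coefficient $\varepsilon$ that is small when localized to a ball of small radius (by absolute continuity of $\int g_k^{n/\alpha}$), absorb it, and then remove the a priori assumption by an approximation/continuity argument — regularizing $\cA$ and $G$, running the estimate uniformly, and passing to the limit. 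The interpolation needed to make the absorption legitimate (trading a bit of the top integrability for smallness) is the technically delicate step, but structurally it is identical to what must already be done to prove Theorem \ref{mainBesovhomog}, with the sole addition of the harmless, already-$\ell^q$-summable contribution of $\tau_h G-G$. Finally, a standard covering argument upgrades the single-ball estimate to the local statement on $\Omega$.
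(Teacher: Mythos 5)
Your overall skeleton --- difference quotients, a Caccioppoli-type inequality for $\Delta_h u$, dyadic summation in $h$ using $(\cA4)$, and H\"older with exponents $\frac{n}{\alpha}$ and $p^\ast_\alpha$ --- matches the paper's strategy, but there are two genuine gaps.

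First, your difference-quotient Caccioppoli lives in the wrong Lebesgue space. Testing the subtracted equations with a cut-off times $\tau_h u-u$ and using $(\cA1)$ controls only $\int\eta^2|\Delta_h Du|^2$, i.e.\ the $L^2$ norm of $\Delta_h Du$, whereas the quantity you must feed into the Besov seminorm is $\|\Delta_h Du\|_{L^p}$ for general $p\in(p_0',p_0)$; indeed your right-hand side terms ($2^{k\alpha}\|\Delta_h G\|_{L^p}$ and $\|g_k\|_{L^{n/\alpha}}\|Du\|_{L^{p^\ast_\alpha}}$) are already $L^p$ quantities. For $p\neq 2$ the passage from the $L^2$ difference estimate to an $L^p$ one is precisely the nontrivial point. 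The paper rewrites the subtracted equation as a genuine new equation $\div\cA_h(x,Dv_h)=\div G_h$ for $v_h=\Delta_h u/|h|^\alpha$ and applies the non-standard Caccioppoli inequality of Lemma \ref{nonstandardcaccioppoli}, whose proof rests on the Hodge decomposition of $|Dv_h|^{p-2}Dv_h$ and whose constant can be absorbed only for $p$ close to $2$: this, and not a Meyers-type integrability estimate for $Du$ itself, is where the restriction $p_0'<p<p_0$ enters.

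Second, the ``fixed-point/bootstrap structure'' you describe is an artifact of applying the embedding $B^\alpha_{p,q}\subset L^{p^\ast_\alpha}$ to $Du$. The paper applies it to $G$ instead: the hypothesis $p>\frac{nq}{n+\alpha q}$ (equivalently $q\le p^\ast_\alpha$) yields $G\in L^{p^\ast_\alpha}_{loc}$, and then Theorem \ref{mainvmo} (the $VMO$-based $L^q$ theory for the \emph{original} equation; or Lemma \ref{nonstandardcaccioppoli} when $p^\ast_\alpha<2$) gives $Du\in L^{p^\ast_\alpha}_{loc}$ unconditionally. Hence the factor $\|Du\|_{L^{p^\ast_\alpha}}$ multiplying $\|(g_k)_k\|_{\ell^q(L^{n/\alpha})}$ is a finite constant known in advance, the estimate closes in a single pass via Lemma \ref{localbesov2}, and no absorption, no smallness of $\|(g_k)_k\|_{\ell^q(L^{n/\alpha}(B))}$ on small balls, and no a priori qualitative assumption $Du\in B^\alpha_{p,q}$ (whose removal by approximation you only gesture at, and which is exactly what is to be proved) are needed. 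As written, your argument is circular at this step unless that approximation scheme is actually carried out.
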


\noindent
The above theorem is sharp, in the sense that one cannot expect $Du$ to belong to a Besov space $B^\beta_{p',q'}$ for any $\beta>\alpha$ (as can be seen from the constant coefficient setting). Moreover, our arguments also show that the restriction $p_0'<p<p_0$ can be removed if $\cA$ is linear in the gradient variable. In fact, we have the following

\begin{thm}\label{mainBesovlinear}
Let $0<\alpha<1$ and $1\leq q\leq\infty$, and assume that $\cA(x,\xi)=A(x)\xi$ satisfies $(\cA1),(\cA2),(\cA3)$. Suppose that the entries of $A(x)$ belong to $B^\alpha_{\frac{n}\alpha, q}$. Then the implication
$$G\in B^\alpha_{p,q} \hspace{1cm}\Rightarrow\hspace{1cm}Du\in B^\alpha_{p,q}$$
holds locally, provided that $\max\{1,\frac{nq}{n+\alpha q}\}< p<\frac{n}{\alpha}$ and $u$, $G$ satisfy \eqref{defeq}.
\end{thm}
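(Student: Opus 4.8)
The plan is to deduce Theorem \ref{mainBesovlinear} from Theorem \ref{mainBesov} by removing the artificial restriction $p_0'<p<p_0$ in the linear case. First, I would observe that for $\cA(x,\xi)=A(x)\xi$ the equation $\div(A(x)Du)=\div G$ is genuinely linear, so higher integrability of $Du$ is no longer tied to the Gehring/Meyers exponent $p_0$: once we know $Du\in L^p_{loc}$ for the natural exponent $p=2$, a Calder\'on--Zygmund argument (or directly the classical linear $L^p$ theory for elliptic equations with $VMO$, hence with $B^\alpha_{n/\alpha,q}\subset VMO$, coefficients) gives $Du\in L^p_{loc}$ for every $p\in(1,\infty)$ whenever $G\in L^p_{loc}$. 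This is precisely the role played in the excerpt by the ``main tool'' on local well-posedness in $L^q$ under a locally uniform $VMO$ condition, which applies here because $B^\alpha_{n/\alpha,q}\hookrightarrow VMO_{loc}$. Thus in the linear setting the a priori integrability hypothesis $p>p_0'$ that was needed to start the fractional bootstrap is free, and the upper restriction $p<p_0$ is likewise unnecessary; the only genuine constraints that survive are $p<\frac{n}{\alpha}$ (so that $A\in B^\alpha_{n/\alpha,q}$ can be paired with $Du\in B^\alpha_{p,q}$ via the relevant multiplication/paraproduct estimate, using $\frac{\alpha}{n}+\frac1p>\frac1{p^\ast_\alpha}$ type conditions) and $p>\frac{nq}{n+\alpha q}$ (equivalently $q\le p^\ast_\alpha$, so that the embedding $B^\alpha_{p,q}\subset L^{p^\ast_\alpha}$ used to handle the inhomogeneous term is available, exactly as flagged before Theorem \ref{mainBesov}).

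Concretely, I would run the same difference-quotient / Besov scheme used to prove Theorem \ref{mainBesov}, but replace every appeal to the non-standard Caccioppoli inequality (Lemma \ref{nonstandardcaccioppoli}) and the Gehring-type self-improvement — which is what forced $p_0'<p<p_0$ there — by the linear $L^p$ Calder\'on--Zygmund estimate. That is: rewrite \eqref{defeq} as $\div(A(x)Du)=\div G$, freeze the translation, and for $h\in\R^n$ set $\tau_h u(x)=u(x+h)$; then $\tau_h u - u$ solves $\div(A(x)D(\tau_h u-u)) = \div\big(G(x+h)-G(x)\big) - \div\big((A(x+h)-A(x))D\tau_h u\big)$. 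Applying the linear $L^p$ estimate on a slightly smaller ball,
\begin{equation}\label{linftykey}
\|D(\tau_h u - u)\|_{L^p(B')} \lesssim \|G(\cdot+h)-G(\cdot)\|_{L^p(B)} + \|(A(\cdot+h)-A(\cdot))\,D\tau_h u\|_{L^p(B)}.
\end{equation}
For the last term I would use $(\cA4)$ (equivalently $A\in B^\alpha_{n/\alpha,q}$) together with H\"older's inequality with exponents $\frac{n}{\alpha}$ and $p^\ast_\alpha$ and the Sobolev--type embedding $B^\alpha_{p,q}\subset L^{p^\ast_\alpha}$ — which is exactly where $q\le p^\ast_\alpha$, i.e. $p>\frac{nq}{n+\alpha q}$, enters — to bound it by $|h|^\alpha(g_k(\cdot)+g_k(\cdot+h))$-type factors times $\|D\tau_h u\|_{L^{p^\ast_\alpha}}$, and then close the estimate by absorbing/iterating. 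Finally, dividing \eqref{linftykey} by $|h|^\alpha$, taking $L^q$ norms in $h$ over dyadic annuli (using the sequence $(g_k)_k\in\ell^q(L^{n/\alpha})$ to control the coefficient contribution), and invoking the difference characterization of $B^\alpha_{p,q}$ recalled in Section \ref{prelim} yields $Du\in B^\alpha_{p,q}$ locally.

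The main obstacle I anticipate is the bootstrap in the integrability exponent: the estimate \eqref{linftykey} for a given $p$ produces, on its right-hand side, a quantity controlled by $\|D\tau_h u\|_{L^{p^\ast_\alpha}}$ with $p^\ast_\alpha>p$, so one must first know $Du\in L^{p^\ast_\alpha}_{loc}$ before one can conclude the Besov bound at level $p$. In the linear case this is not circular: one starts from $Du\in L^2_{loc}$, upgrades to $Du\in L^r_{loc}$ for all finite $r$ by the linear Calder\'on--Zygmund theory (provided $G$ is in the corresponding $L^r_{loc}$, which follows from $G\in B^\alpha_{p,q}\subset L^{p^\ast_\alpha}_{loc}$ and a further interpolation), and only then runs the difference-quotient argument at the desired $p$. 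Making this chain of integrability gains precise — in particular checking that no loss in the differentiability order $\alpha$ is incurred at any step, and that the $\ell^q$ summability of $(g_k)$ survives the H\"older pairings — is the technical heart of the argument; everything else is a routine adaptation of the proof of Theorem \ref{mainBesov} with $p_0$ sent to $\infty$ and $p_0'$ sent to $1$.
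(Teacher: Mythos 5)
Your proposal is correct and follows essentially the same route as the paper: the paper likewise removes the restriction $p_0'<p<p_0$ by noting that the difference-quotient equation is linear with $VMO$ (indeed $B^\alpha_{\frac{n}{\alpha},q}$) coefficients, replaces the non-standard Caccioppoli inequality by the Iwaniec--Sbordone linear $L^p$ theory, uses $q\leq p^\ast_\alpha$ (i.e.\ $p>\frac{nq}{n+\alpha q}$) to get $G\in L^{p^\ast_\alpha}_{loc}$ and hence $Du\in L^{p^\ast_\alpha}_{loc}$, and then closes the estimate by pairing the $\ell^q(L^{n/\alpha})$ Hajlasz gradient of $A$ with $Du\in L^{p^\ast_\alpha}$ over dyadic annuli in $h$. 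The only cosmetic difference is that you freeze the coefficient at $A(x)$ and put $(A(x+h)-A(x))D\tau_hu$ on the right, while the paper uses $A(x+h)$ and $Du(x)$; this changes nothing.
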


\noindent
We do not know if Theorems \ref{mainBesovhomog}, \ref{mainBesov} and \ref{mainBesovlinear} remain true in the Triebel-Lizorkin setting, that is, replacing $\ell^q(L^\frac{n}{\alpha})$ by $L^\frac{n}{\alpha}(\ell^q)$ and $B^\alpha_{p,q}$ by $F^\alpha_{p,q}$. \\
\\
Theorems \ref{maintriebel}, \ref{mainBesovhomog}, \ref{mainBesov} and \ref{mainBesovlinear} rely on the basic fact that the Besov spaces $B^\alpha_{\frac{n}\alpha,q}$ and the Triebel-Lizorkin space $F^\alpha_{\frac{n}\alpha,\infty}$ continuously embed into the $VMO$ space of Sarason (e.g. \cite[Prop. 7.12]{Ha}). Linear equations with $VMO$ coefficients are known to have a nice $L^p$ theory (see \cite{I} for $n=2$ or  \cite{IS} for $n\geq2$). A first nonlinear growth counterpart was found in \cite{KZ} for $\cA(x,\xi)=\langle A(x)\xi,\xi\rangle^{s-2}\,A(x)\xi$, $2\leq s\leq n$. Given $s\in[2,n]$, we say that $\fA:\Omega\times \R^n\to\R^n$ is a \emph{Carath\'eodory function of growth $s-1$} if there are constants $L,\ell,\nu>0$ and $0\leq\mu\leq 1$ such that
\begin{enumerate}
\item[{\rm ($\fA1$)}] $\langle \fA(x,\xi)-\fA(x,\eta),\xi-\eta\rangle \geq    \nu(\mu^2+|\xi|^2+|\eta|^2)^{ \frac{s-2}{2}}|\xi-\eta|^2$,
\item[{\rm ($\fA2$)}] $|\fA(x,\xi)-\fA(x,\eta)|\leq L (\mu^2+|\xi|^2+|\eta|^2)^{ \frac{s-2}{2}}|\xi-\eta|$, and
\item [{\rm ($\fA3$)}]$|\fA(x,\xi)|\leq \ell (\mu^2+|\xi|^2)^{\frac{ s-1}{2}}$,
\end{enumerate}
The following result is our main tool for proving Theorems \ref{maintriebel}, \ref{mainBesovhomog}, \ref{mainBesov} and \ref{mainBesovlinear}.

\begin{thm}\label{mainvmo}
Let  $2\leq s\leq n$, and $q>s$. Assume that $(\fA1), (\fA2) , (\fA3)$ hold, and also that $x\mapsto \fA(x,\xi)$ is locally uniformly in $VMO$. If $u$ is a weak solution of 
\begin{equation}\label{defeqs}\div\fA(x,Du)=\div(|G|^{s-2}G)\end{equation}
with $G\in L^q_{loc}$, then $Du\in L^q_{loc}$. Moreover, there exists a constant $\lambda> 1$ such that the Caccioppoli inequality
$$
\fint_{B} |Du|^q \leq C(n,\lambda, \nu,\ell, L, s,q)\left(1+\frac1{|B|^{q/n}}\fint_{\lambda B}|u|^q+\fint_{\lambda B} |G|^q \right)
$$
holds for any ball $B$ such that $\lambda B\subset \Omega$.
\end{thm}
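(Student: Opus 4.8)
The plan is to establish the $L^q$ estimate by a Calderón–Zygmund / perturbation argument in the spirit of Caffarelli–Peral and Kristensen–Mingione, adapted to the $VMO$ setting following \cite{KZ} and \cite{IS}. The local nature of the claim means we may fix a ball $B_0$ with $\lambda B_0\Subset\Omega$ and work there. First I would record the basic a priori estimate: since $u\in W^{1,2}_{loc}$ is a weak solution and $\fA$ has growth $s-1$, the standard Caccioppoli inequality gives local control of $\fint_{B}|Du|^{s}$ by $1+|B|^{-s/n}\fint_{\lambda B}|u|^{s}+\fint_{\lambda B}|G|^{s}$; this is the starting point from which higher integrability is bootstrapped. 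The key device is a \emph{comparison map}: on each small ball $B_r(x_0)$, let $v$ solve the frozen-coefficient problem $\div\bar{\fA}_{B_r}(Dv)=0$ with $v=u$ on $\partial B_r$, where $\bar{\fA}_{B_r}(\xi)$ is a suitable average of $\fA(\cdot,\xi)$ over $B_r$. Because the frozen operator still satisfies $(\fA1),(\fA2),(\fA3)$, classical regularity for homogeneous equations of growth $s-1$ gives that $|Dv|^{s}$ satisfies a reverse-Hölder / higher-integrability inequality with a gain $\delta>0$, and that $Dv$ has good decay estimates on smaller balls.

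Next I would estimate the difference $u-v$. Testing the two equations against $u-v$ and using ellipticity, one controls $\fint_{B_r}|Du-Dv|^{s}$ (or the natural $V$-function analogue $|V(Du)-V(Dv)|^2$) by two contributions: a term measuring the oscillation of the coefficients, namely $\fint_{B_r}|\fA(x,Du)-\bar{\fA}_{B_r}(Du)|^{s'}\,dx$, which by the growth condition and the definition of $\bar{\fA}_{B_r}$ is bounded by a quantity of the form $\omega(r)\big(1+\fint_{\lambda B_r}|Du|^{s}\big)$ with $\omega$ the (locally uniform) $VMO$-modulus of $x\mapsto\fA(x,\xi)$; and a term coming from the right-hand side, bounded by $\fint_{\lambda B_r}|G|^{s}$. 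The crucial point is that the $VMO$ hypothesis makes $\omega(r)\to0$ as $r\to0$, so the coefficient error can be made arbitrarily small by shrinking the ball, \emph{uniformly} in $x_0$ over the fixed compact subset — this is precisely where ``locally uniformly in $VMO$'' is used, and it is also the main obstacle, since one must track that the smallness is genuinely uniform and interacts correctly with the exponents when $s<n$.

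With the comparison estimates in hand, I would run the standard machinery: define the (super-level sets of the) maximal function of $|Du|^{s}$, and use the comparison with $v$ together with the higher integrability of $Dv$ to verify the hypotheses of a Calderón–Zygmund-type covering/stopping-time lemma (e.g. the Vitali-type lemma of Caffarelli–Peral, or Gehring-type iteration as in \cite{KZ}). This yields a decay estimate $|\{M(|Du|^s)>\Lambda^k\}|\le (C\varepsilon)^k|\{M(|Du|^s)>1\}|+\dots$, from which $|Du|^s\in L^{q/s}_{loc}$, i.e. $Du\in L^q_{loc}$, for every $q>s$ follows by summation, provided $\varepsilon$ (controlled by $\omega(r)$) is chosen small enough depending on $q$; note $\delta$ is allowed to be small because the iteration only needs \emph{some} self-improvement of $Dv$, not a quantitative one. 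Finally, the quantitative Caccioppoli-type bound in the statement is obtained by making the stopping-time argument scale-invariant: all constants depend only on $n,\lambda,\nu,\ell,L,s,q$ (with $\lambda>1$ the dilation factor produced by the covering), and tracking the normalizations $|B|^{-q/n}\fint|u|^q$ and $\fint|G|^q$ through the iteration gives exactly the displayed inequality. I expect the routine but delicate parts to be: (i) choosing the correct average $\bar{\fA}_{B_r}$ and the correct function-space framework ($V$-functions vs.\ plain gradients) so that the growth conditions close up; and (ii) handling the borderline integrability bookkeeping when $s$ is close to $n$, where Sobolev embedding of $u$ into $L^s$ on $\lambda B$ must be invoked with care.
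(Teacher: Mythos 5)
Your overall architecture --- freeze the coefficients on small balls, compare $u$ with the solution $v$ of the homogeneous frozen problem, and use the locally uniform $VMO$ modulus to make the comparison error small --- is exactly the paper's first step (Theorem \ref{vmolocal}). Where you diverge is in the machinery that converts the comparison estimate into $L^q$ bounds: you propose a Caffarelli--Peral level-set/covering iteration on $\{M(|Du|^s)>\Lambda^k\}$, whereas the paper estimates the restricted sharp maximal function $\M^\sharp_{s,k_0d}(Dw)$ pointwise and invokes the Fefferman--Stein-type inequality of Lemma \ref{sharpmax}. The price of the paper's route is that the absorption step only works if $\|Dw\|_{L^q}$ is already finite, so Theorem \ref{vmolocal} is only an a priori estimate; the paper then spends the second half of the proof removing that assumption by mollifying $\fA$ and $G$, solving regularized Dirichlet problems $(P_\varepsilon)$ whose solutions are automatically in $W^{1,q}$, proving strong $W^{1,s}$ convergence $u_\varepsilon\to u$, and iterating the Caccioppoli inequality over a decreasing chain of exponents $q_j=nq_{j-1}/(n+q_{j-1})$ down to some $q_h\le s^*$ so that the $u_\varepsilon$-term on the right passes to the limit. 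Your level-set route could in principle bypass this approximation entirely (the geometric decay of the level sets yields $L^q$ directly from $Du\in L^s_{loc}$), which would be a genuine simplification --- but you never address this qualitative issue, and if you instead fall back on the sharp-maximal/Gehring variant you inherit exactly the a priori finiteness problem the paper has to work around.

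Two concrete points need repair. First, your remark that ``$\delta$ is allowed to be small because the iteration only needs some self-improvement of $Dv$'' is wrong for the stated range: a reverse-H\"older gain $\delta$ on $|Dv|^s$ would only yield $Du\in L^q_{loc}$ for $q<s+\delta$. To reach every $q>s$ the covering (or sharp-maximal) argument needs the interior sup-bound $\sup_{\lambda B}|Dv|\lesssim\big(\fint_B(1+|Dw|^s)\big)^{1/s}$ together with the oscillation decay $\fint_{\delta B}|Dv-(Dv)_{\delta B}|^s\lesssim\delta^{\alpha s}\fint_B(1+|Dw|^s)$ for the frozen homogeneous problem; this is Lemma \ref{autonomousbvp}, and it is indispensable. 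Second, your bound of the coefficient-oscillation term by $\omega(r)\big(1+\fint|Du|^s\big)$ does not follow from the $VMO$ hypothesis alone: the hypothesis controls only the unweighted average $\fint_B V(x,B)\,dx$, while the comparison produces the weighted integral $\int_B V(x,B)(\mu^2+|Dw|^2)^{(s-1)/2}|Dw-Dv|$. One must first invoke the Gehring/Meyers higher integrability $Du\in L^t_{loc}$, $t>s$ (Theorem \ref{highint}), and apply H\"older's inequality with exponents $t/(t-s)$ and $t/s$, using the boundedness of $V$ to reduce $\int V^{ts'/(t-s)}$ to $\int V$; this is estimate \eqref{I1} in the paper and is precisely where the ``interaction with the exponents'' you flag as delicate is actually resolved. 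With these two additions your outline becomes a viable, genuinely alternative proof.
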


\noindent
See Section \ref{sectvmo} for the precise definition of \emph{locally uniformly VMO}. The proof of Theorem \ref{mainvmo} is inspired by that of \cite{KZ}, although now new technical difficulties arise from the fully nonlinear structure of $\cA$. The result has its own interest, especially for two reasons. First,  $\xi\mapsto\fA(x,\xi)$ is not assumed to be ${\mathcal C}^1$-smooth. Second, the allowed independent terms do not restrict to finite measures. Under these assumptions many interesting bounds on the size and the oscillations of the solutions and gradients are established in \cite{KM} and \cite{KM2}. Unfortunately, and in contrast to the linear situation, this time the lack of self-adjointness is an obstacle to extend the result for values $q\in(1,s)$. \\
\\
The paper is structured as follows. In Section \ref{prelim} we give some preliminaries on Harmonic Analysis. In Section \ref{sectvmo} we prove Theorem \ref{mainvmo}. In Section \ref{sectiontriebel} we prove Theorem \ref{maintriebel} as it is illustrative for proving Theorem \ref{mainBesovhomog} later. In Section \ref{fract} we prove Theorems \ref{mainBesovhomog}, \ref{mainBesov} and Theorem \ref{mainBesovlinear} .\\
\\
\textbf{Acknowledgements}. Antonio Luis Bais\'on, Albert Clop and Joan Orobitg are partially supported by projects MTM2013-44699 (Spanish Ministry of Science), 2014-SGR-75 (Generalitat de Catalunya) and FP7-607647 (MAnET- European Union). Albert Clop is partially supported by the research program ``Ram\'on y Cajal" of the Spanish Ministry of Science. Raffaella Giova and Antonia Passarelli di Napoli are supported  by the Gruppo Nazionale per l'Analisi Matematica, la Probabilit\`a e le loro Applicazioni (GNAMPA) of the Istituto Nazionale di Alta Matematica (INdAM). Raffaella Giova has been partially supported by Project Legge 5/2007 Regione Campania ``Spazi pesati ed applicazioni al calcolo dell variazioni". Antonia Passarelli di Napoli ihas been partially supported by MIUR through the Project
PRIN (2012)``Calcolo delle Variazioni''  .

\section{Notations and Preliminary Results}\label{prelim}

\bigskip

\noindent
In this paper we follow the usual convention and denote by $c$ a
general constant that may vary on different occasions, even within the
same line of estimates.
Relevant dependencies on parameters and special constants will be suitably emphasized using
parentheses or subscripts. The norm we use
on $\mathbb{R}^n$  will be the standard euclidean one and it will be denoted
by $| \cdot |$. In particular, for vectors $\xi$, $\eta \in \mathbb{R}^n$ we write
$\langle \xi , \eta \rangle $ for the usual inner product
of $\xi$ and $\eta$, and $| \xi | := \langle \xi , \xi \rangle^{\frac{1}{2}}$ for the
corresponding euclidean norm. In what follows, $B(x,r)=B_r(x)=\{y\in \R^n:\,\, |y-x|<r\}$ will denote the ball centered at $x$ of radius $r$.
We shall omit the dependence on the center and on the radius when no confusion arises. 

\bigskip

\subsection{Besov-Lipschitz spaces}

Given $h\in \R^n$ and $v:\R^n\to\R$, let us denote $\tau_hv(x)=v(x+h)$ and $\Delta_hv(x)=v(x+h)-v(x)$. As in \cite[ Section 2.5.12]{Tr}, given $0<\alpha<1$ and $1\leq p,q<\infty$, we say that $v$ belongs to the Besov space $B^\alpha_{p,q}(\R^n)$ if $v\in L^p(\R^n)$ and
$$
\|v\|_{B^\alpha_{p,q}(\R^n)}  =\|v\|_{L^p(\R^n)}+[v]_{\dot{B}^\alpha_{p,q}(\R^n)}<\infty$$
where
$$
[v]_{\dot{B}^\alpha_{p,q}(\R^n)}=\left(\int_{\R^n} \left(\int_{\R^n}\frac{|v(x+h)-v(x)|^p}{|h|^{\alpha p}} dx\right)^\frac{q}{p}\frac{dh}{|h|^n}\right)^\frac1q<\infty
$$
Equivalently, we could simply say that $v\in L^p(\R^n)$ and $\frac{\Delta_h v}{|h|^\alpha}  \in L^q\left(\frac{dh}{|h|^n}; L^p(\R^n)\right)$. As usually, if one simply integrates for $h\in B(0,\delta)$ for a fixed $\delta>0$ then an equivalent norm is obtained, because
$$
\left(\int_{\{|h| \geq\delta\}} \left(\int_{\R^n}\frac{|v(x+h)-v(x)|^p}{|h|^{\alpha p}} dx\right)^\frac{q}{p}\frac{dh}{|h|^n}\right)^\frac1q\leq c(n,\alpha, p, q,\delta)\,\|v\|_{L^p(\R^n)}
$$
Similarly, we say that $v\in B^\alpha_{p,\infty}(\R^n)$ if $v\in L^p(\R^n)$ and
$$
[v]_{\dot{B}^\alpha_{p,\infty}(\R^n)} =\sup_{h\in\R^n}   \left(\int_{\R^n}\frac{|v(x+h)-v(x)|^p}{|h|^{\alpha p }} dx\right)^\frac{1}{p} <\infty$$
Again, one can simply take supremum over $|h|\leq \delta$ and obtain an equivalent norm.  By construction, $B^\alpha_{p,q}(\R^n)\subset L^p(\R^n)$. One also has the following version of Sobolev embeddings (a proof can be found at \cite[Prop. 7.12]{Ha}).

\begin{lem}\label{embedding} 
Suppose that $0<\alpha<1$.
\begin{itemize}
\item[(a)] If $1<p<\frac{n}{\alpha}$ and $1\leq q\leq p^\ast_\alpha$ then there is a continuous embedding $B^\alpha_{p,q }(\R^n)\subset L^{p^\ast_\alpha}(\R^n)$. 
\item[(b)] If $p=\frac{n}{\alpha}$ and $1\leq q\leq \infty$ then there is a continuous embedding $B^\alpha_{p,q}(\R^n)\subset BMO(\R^n)$. 
\end{itemize}
\end{lem}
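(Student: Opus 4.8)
\textbf{Proof proposal for Lemma \ref{embedding}.}

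The plan is to reduce both statements to the classical Sobolev-type inequalities for Besov spaces by exploiting the difference-quotient characterization of $B^\alpha_{p,q}(\R^n)$ given above, and to argue directly rather than invoking a Fourier-analytic dyadic decomposition. For part (a), the starting point is the integral representation
$$
v(x)-\fint_{B(x,t)}v(y)\,dy=\fint_{B(x,t)}\Delta_{y-x}v(x)\,dy,
$$
which, after dyadic splitting $\R^n=\bigcup_j\{2^{-j-1}t\le|h|<2^{-j}t\}$, lets us control the oscillation of $v$ at scale $t$ by the family of averaged differences $\big(\fint_{|h|\le 2^{-j}t}|\Delta_h v(x)|^p\,dh\big)^{1/p}$. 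Summing the telescoping series $v=\lim_{t\to 0}\fint_{B(x,t)}v$ and using Minkowski's inequality in $L^p$, one obtains a pointwise bound of the form
$$
|v(x)|\le c\,\Big(\fint_{B(x,1)}|v|\Big)+c\sum_{j\ge 0}2^{-j\alpha}\,g_j(x),\qquad g_j(x):=\Big(2^{jn}\int_{|h|<2^{-j}}|\Delta_h v(x)|^p\,\frac{dh}{|h|^{0}}\Big)^{1/p},
$$
which, after adjusting exponents, identifies the right-hand side as essentially the Besov norm. The hard part here is the bookkeeping with the exponents so that the final $L^{p^\ast_\alpha}$ norm comes out: one uses the elementary inequality that for a nonnegative sequence $(g_j)$, $\big\|\sum_j 2^{-j\alpha}g_j\big\|_{p^\ast_\alpha}\le c\,\big(\sum_j(2^{j(\frac{n}{p}-\frac{n}{p^\ast_\alpha}-\alpha)})^{?}\dots\big)$, interpolated against the trivial bound $\|g_j\|_p$; the interplay $\frac{1}{p^\ast_\alpha}=\frac1p-\frac\alpha n$ is exactly what makes the geometric series converge, and the condition $q\le p^\ast_\alpha$ enters when passing from the $\ell^q$ summability of the Besov seminorm to the $\ell^{p^\ast_\alpha}$ (or weaker) summability needed for the Hardy-type inequality in the sum over $j$.

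For part (b), the argument is shorter: to show $v\in BMO(\R^n)$ it suffices to bound, uniformly over balls $B=B(x_0,r)$, the mean oscillation $\fint_B|v-\fint_B v|$. Writing $v-\fint_B v=\fint_B(v(x)-v(y))\,dy$ and splitting the increment $v(x)-v(y)$ through a dyadic chain of concentric averages between scale $r$ and scale $0$ as before, each step contributes a term controlled by $\big(\fint_{|h|<2^{-j}r}|\Delta_h v|^p\big)^{1/p}$ integrated over $B$; by Jensen/H\"older with exponent $p=\frac n\alpha$ this is at most $c\,r^{\alpha-n/p}\big(\int_{|h|<2^{-j}r}\int_{\R^n}|\Delta_h v|^p\,dx\,\frac{dh}{|h|^{n}}\big)^{1/p}\cdot(2^{-j}r)^{n/p}=c\,2^{-jn/p}\,(2^{-j}r)^{?}\dots$, and since $\alpha p=n$ the powers of $r$ cancel and one is left with a geometric factor $2^{-j\alpha}$ times the $B^\alpha_{n/\alpha,q}$ seminorm (the $\ell^q$-to-$\ell^1$ passage over $j$ is free here precisely because $2^{-j\alpha}$ is summable for every $q\le\infty$, using H\"older in $j$ when $q>1$). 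This yields $\sup_B\fint_B|v-\fint_B v|\le c\,[v]_{\dot B^\alpha_{n/\alpha,q}}$, i.e.\ the claimed embedding.

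In both cases the main obstacle I anticipate is not any single deep estimate but the careful matching of scaling exponents in the dyadic sums, together with justifying the telescoping identity $v(x)=\lim_{t\to 0}\fint_{B(x,t)}v(y)\,dy$ at a.e.\ $x$ (Lebesgue differentiation suffices, since $B^\alpha_{p,q}\subset L^p$) and the interchange of the sum over $j$ with the outer $L^p$ or $L^{p^\ast_\alpha}$ integration (Minkowski's integral inequality, valid because $p\ge 1$ and $p^\ast_\alpha\ge p\ge 1$). Since this is a standard fact, an alternative and arguably cleaner route is simply to cite the Littlewood-Paley description of $B^\alpha_{p,q}$ and invoke the classical Besov embedding theorems of Triebel; but I would prefer the self-contained difference-quotient proof sketched above, as it keeps the paper's toolkit uniform. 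In any event, the reference \cite[Prop.~7.12]{Ha} already contains a complete proof, so here it suffices to indicate the mechanism.
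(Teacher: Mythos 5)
First, a point of comparison: the paper does not prove Lemma \ref{embedding} at all --- it simply quotes \cite[Prop.~7.12]{Ha} --- so your closing fallback (cite the reference, indicate the mechanism) is exactly what the authors do, and as a citation your proposal is fine. Your sketch of part (b) is also essentially correct, and can be compressed: for a ball $B$ of radius $r$ one bounds $\fint_B|v-v_B|$ by $c\fint_B\fint_{|h|\le 2r}|\Delta_h v(x)|\,dh\,dx$, applies H\"older in $x$ with exponent $p=\frac{n}{\alpha}$, and observes that $|B|^{-1/p}|h|^{\alpha}\le c$ precisely because $\alpha p=n$; no dyadic chain is needed, and only the $q=\infty$ seminorm enters, which dominates all the others.

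Part (a), however, has a genuine gap at exactly the step you defer to ``bookkeeping''. The exponent you leave as ``$?$'', namely $\frac{n}{p}-\frac{n}{p^\ast_\alpha}-\alpha$, is identically zero, since $\frac{1}{p^\ast_\alpha}=\frac{1}{p}-\frac{\alpha}{n}$ gives $\frac{n}{p^\ast_\alpha}=\frac{n}{p}-\alpha$. So the sum over $j$ carries \emph{no} geometric decay: the claim that ``the interplay $\frac{1}{p^\ast_\alpha}=\frac1p-\frac\alpha n$ is exactly what makes the geometric series converge'' is the opposite of what happens --- that interplay is exactly what kills the decay. The same cancellation occurs if one tries the triangle inequality together with the interpolation $\|g\|_{p^\ast_\alpha}\le\|g\|_p^{1-\theta}\|g\|_\infty^{\theta}$, $\theta=\frac{\alpha p}{n}$: the factors $2^{-j\alpha}$ and $2^{j\theta n/p}$ cancel and one is left needing $\ell^1$-summability of $\varepsilon_j^{1-\theta}$, which is far more restrictive than $(\varepsilon_j)\in\ell^q$ with $q\le p^\ast_\alpha$. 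This is an endpoint embedding (it genuinely fails for $q>p^\ast_\alpha$), so the hypothesis $q\le p^\ast_\alpha$ must enter the critical summation in an essential, non-geometric way --- typically via a distribution-function/good-$\lambda$ argument splitting $v$ at a $\lambda$-dependent scale $J$, or via the Jawerth--Franke embedding $B^\alpha_{p,q}\hookrightarrow F^0_{p^\ast_\alpha,2}=L^{p^\ast_\alpha}$ for $q\le p^\ast_\alpha$. As written, your sketch does not supply that mechanism, so the self-contained route does not close; if you want a proof in the text rather than the citation, this is the step that must be written out.
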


\noindent
Given a domain $\Omega\subset\R^n$ , we say that $v$ belongs to the local Besov space $B^\alpha_{p,q,loc}$ if $\varphi\,v$ belongs to the global Besov space $B^\alpha_{p,q}(\R^n)$ whenever $\varphi$  belongs to the class $\mathcal{C}^\infty_c(\Omega)$ of smooth functions with compact support contained in $\Omega$. The following Lemma is an easy exercise.  

\begin{lem}\label{localbesov2}
A function $v\in L^p_{loc}(\Omega)$ belongs to the local Besov space $B^{\alpha}_{p,q,loc}$ if and only if
$$\left\|\frac{\Delta_h v}{|h|^\alpha}\right\|_{L^q\left(\frac{dh}{|h|^n}; L^p(B)\right)}<\infty$$
for any ball $B\subset 2B\subset\Omega$ with radius $r_B$. Here the measure $\frac{dh}{|h|^n}$ is restricted to the ball $B(0, r_B)$ on the $h$-space.
\end{lem}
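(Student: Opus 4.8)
The plan is to reduce the local statement to the global one by a cutoff argument, using the fact (recorded just above) that the finite-difference seminorm restricted to small increments $|h| < \delta$ is equivalent to the full Besov seminorm. First I would prove the easy direction: if $v \in B^\alpha_{p,q,loc}$, pick a ball $B$ with $2B \subset \Omega$ and a cutoff $\varphi \in \mathcal{C}^\infty_c(2B)$ with $\varphi \equiv 1$ on $B$ and $\varphi \equiv 1$ on $\frac{3}{2}B$ say, so that $\varphi v \in B^\alpha_{p,q}(\R^n)$ by definition. For $|h| < r_B/2$ and $x \in B$ we have $\Delta_h v(x) = \Delta_h(\varphi v)(x)$, since both $x$ and $x+h$ lie in the region where $\varphi = 1$; hence
$$\left\|\frac{\Delta_h v}{|h|^\alpha}\right\|_{L^q(\frac{dh}{|h|^n};\,L^p(B))} \leq \left\|\frac{\Delta_h(\varphi v)}{|h|^\alpha}\right\|_{L^q(\frac{dh}{|h|^n};\,L^p(\R^n))} \leq c\,\|\varphi v\|_{B^\alpha_{p,q}(\R^n)} < \infty,$$
where the measure $\frac{dh}{|h|^n}$ on the left is already restricted to $B(0,r_B)$ and we further shrink to $B(0, r_B/2)$, which only decreases the integral.

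For the converse I would argue as follows. Suppose the displayed finite-difference quantity is finite for every ball $B$ with $2B \subset \Omega$; I must show $\varphi v \in B^\alpha_{p,q}(\R^n)$ for an arbitrary $\varphi \in \mathcal{C}^\infty_c(\Omega)$. Cover $\supp\varphi$ by finitely many balls $B_1,\dots,B_N$ with $2B_j \subset \Omega$ (possible since $\supp\varphi$ is compact in the open set $\Omega$), and set $r = \min_j r_{B_j}$. First, $\varphi v \in L^p(\R^n)$ is clear since $v \in L^p_{loc}$ and $\varphi$ is bounded with compact support. For the seminorm, by the remark preceding the lemma it suffices to bound
$$\left(\int_{\{|h| < r\}} \left(\int_{\R^n} \frac{|\Delta_h(\varphi v)(x)|^p}{|h|^{\alpha p}}\,dx\right)^{q/p} \frac{dh}{|h|^n}\right)^{1/q}.$$
Write the Leibniz-type splitting $\Delta_h(\varphi v)(x) = \varphi(x+h)\,\Delta_h v(x) + v(x)\,\Delta_h\varphi(x)$. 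The second term is controlled by $\|v\|_{L^p(\supp\varphi + B(0,r))}$ times the Besov seminorm of the smooth compactly supported function $\varphi$, which is finite (smooth functions lie in every Besov space), so it contributes a finite amount. For the first term, since $\varphi$ is bounded and $x \mapsto \varphi(x+h)\Delta_h v(x)$ is supported, for $|h|<r$, in a fixed compact neighbourhood $K$ of $\supp\varphi$ with $K \subset \bigcup_j B_j$, we estimate the inner $L^p$ norm over $K$ by a finite sum of $L^p$ norms over the $B_j$, and thus the whole expression is dominated by $\sum_j \|\frac{\Delta_h v}{|h|^\alpha}\|_{L^q(\frac{dh}{|h|^n};\,L^p(B_j))}$, which is finite by hypothesis. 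Combining the two contributions gives $\varphi v \in B^\alpha_{p,q}(\R^n)$, as required. (The case $q = \infty$ is identical with the outer integral replaced by a supremum.)

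The only mildly delicate point — the one I would be most careful about — is the bookkeeping in the converse direction: one must make sure that for $|h|$ below the common threshold $r$ the translated supports $\supp\varphi + h$ stay inside the union of the $B_j$, so that the localization of the inner $L^p$ integral is legitimate and uniform in $h$. This is purely a compactness-plus-finite-cover argument and involves no analysis beyond the triangle inequality and the product rule for finite differences; everything else follows from the definitions and the equivalence of norms already established in this section.
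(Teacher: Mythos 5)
Your proof is correct and takes essentially the same route as the paper's: the same Leibniz splitting of $\Delta_h(\varphi v)$ into a $\Delta_h\varphi$ term (harmless because it is $O(|h|)$, hence $|h|^{-\alpha}\Delta_h\varphi$ is integrable against $\frac{dh}{|h|^n}$ on bounded sets) plus a $\varphi\,\Delta_h v$ term, followed by the localization to balls that the paper dismisses as ``clear''. The only point to tighten is in the converse: choosing $r=\min_j r_{B_j}$ does not by itself guarantee $\supp\varphi+B(0,r)\subset\bigcup_j B_j$, so you should also take $r$ smaller than the (positive) distance from the compact set $\supp\varphi$ to the complement of the open set $\bigcup_j B_j$; likewise, in the forward direction the range $r_B/2\le|h|<r_B$ is trivially handled by $\|\Delta_h v\|_{L^p(B)}\le 2\|v\|_{L^p(2B)}$.
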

\begin{proof}
Let us fix a smooth and compactly supported test function $\varphi$. We have the pointwise identity
$$
\frac{\Delta_h(\varphi v)(x)}{|h|^\alpha}= v(x+h)\, \frac{\Delta_h \varphi(x)}{|h|^\alpha} +\frac{\Delta_hv(x)}{|h|^\alpha}\,\varphi(x).$$
It is clear that 
$$
\left|v(x+h) \, \frac{ \Delta_h \varphi(x) }{|h|^\alpha}\right|\leq |v(x+h)|\,\|\nabla\varphi\|_\infty\,|h|^{1-\alpha}
$$
and therefore one always has $\frac{\Delta_h \varphi}{|h|^\alpha}  \in L^q\left(\frac{dh}{|h|^n}; L^p(\R^n)\right)$. As a consequence, we have the equivalence
$$
\varphi v\in B^\alpha_{p,q}(\R^n)\hspace{1cm}\Longleftrightarrow\hspace{1cm} \frac{\Delta_hv }{|h|^\alpha}\,\varphi\in  L^q\left(\frac{dh}{|h|^n}; L^p(\R^n)\right).
$$
However, it is clear that $\frac{\Delta_hv }{|h|^\alpha}\,\varphi\in  L^q\left(\frac{dh}{|h|^n}; L^p(\R^n) \right)$ for each $\varphi\in C^\infty_c(\Omega)$ if and only if the same happens for every $\varphi=\chi_B$ and every ball $B\subset 2B\subset\Omega$. The claim follows.
\end{proof}

\noindent
As in \cite[Section 2.5.10]{Tr}, we say that a function $v:\R^n\to\R$ belongs to the Triebel-Lizorkin space $F^\alpha_{p,q}(\R^n)$ if $v\in L^p(\R^n)$ and
$$\|v\|_{F^\alpha_{p,q}(\R^n)}=\|v\|_{L^p(\R^n)}+[v]_{\dot{F}_\alpha^{p,q}(\R^n)}<\infty,$$
where
$$
[v]_{\dot{F}_\alpha^{p,q}(\R^n)}=\left(\int_{\R^n}\left(\int_{\R^n}\frac{|v(x+h)-v(x)|^q}{|h|^{n+\alpha q}}\,dh\right)^\frac{p}{q}dx\right)^\frac1p
$$
Equivalently, we could simply say that $v\in L^p(\R^n)$ and $\frac{\Delta_h v}{|h|^\alpha}  \in L^p\left(dx; L^q(\frac{dh}{|h|^n})\right)$.\\
\\
It turns out that Besov-Lipschitz and Triebel-Lizorkin spaces of fractional order $\alpha\in(0,1)$ can be characterized in pointwise terms. Given a measurable function $v:\R^n\to\R$, a \emph{fractional $\alpha$-Hajlasz gradient for $v$} is a sequence $(g_k)_k$ of measurable, non-negative functions $g_k:\R^n\to\R$, together with a null set $N\subset\R^n$, such that the inequality
$$|v(x)-v(y)|\leq |x-y|^\alpha\,(g_k(x)+g_k(y))$$
holds whenever $k\in\Z$ and $x,y\in\R^n\setminus N$ are such that $2^{-k}\leq|x-y|<2^{-k+1}$. We say that $(g_k)\in \ell^q(\Z; L^p(\R^n))$ if 
 $$\|(g_k)_k\|_{\ell^q(L^p)} = \left(\sum_{k\in\Z}\|g_k\|_{L^p(\R^n)}^q\right)^\frac1q<\infty.$$
Similarly, we write $(g_k)\in L^p(\R^n; \ell^q(\Z))$ if
 $$\|(g_k)_k\|_{L^p(\ell^q)} = \left(\int_{\R^n}\|(g_k(x))_k\|_{\ell^q(\Z)}^p\,dx\right)^\frac1p<\infty.$$
The following result was proven in \cite{KYZ}.

\begin{thm}
Let $0<\alpha<1$, $1\leq p<\infty$ and $1\leq q\leq \infty$. Let $v\in L^p(\R^n)$.
\begin{enumerate}
\item One has $v\in B^\alpha_{p,q}(\R^n)$ if and only if there exists a fractional $\alpha$-Hajlasz gradient $(g_k)_k\in \ell^q(\Z; L^p(\R^n))$  for $v$. Moreover,
$$\|v\|_{B^\alpha_{p,q}(\R^n)}\simeq \inf \|(g_k)_k\|_{\ell^q(L^p)}$$
where the infimum runs over all possible fractional $\alpha$-Hajlasz gradients for $v$.
\item One has $v\in F^\alpha_{p,q}(\R^n)$ if and only if there exists a fractional $\alpha$-Hajlasz gradient $(g_k)_k\in L^p(\R^n;\ell^q(\Z))$  for $v$. Moreover,
$$\|v\|_{F^\alpha_{p,q}(\R^n)}\simeq \inf \|(g_k)_k\|_{L^p(\ell^q)}$$
where the infimum runs over all possible fractional $\alpha$-Hajlasz gradients for $v$.
\end{enumerate}
\end{thm}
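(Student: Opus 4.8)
The plan is to prove each of the two equivalences by establishing both implications; the ``only if'' halves of (1) and (2) can be treated by one and the same construction, so that the genuine difference between the Besov and the Triebel--Lizorkin statements appears only in the ``if'' halves. For the \emph{sufficiency} in (1), I would start from a fractional $\alpha$-Hajlasz gradient $(g_k)_k$ for $v$: for $h$ with $2^{-k}\le|h|<2^{-k+1}$ one has $|\Delta_h v(x)|^p\le 2^{p-1}|h|^{\alpha p}(g_k(x+h)^p+g_k(x)^p)$, so integrating in $x$ and using translation invariance gives $\int_{\R^n}|h|^{-\alpha p}|\Delta_h v|^p\,dx\le 2^p\|g_k\|_{L^p}^p$. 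Decomposing the $h$-integral defining $[v]_{\dot B^\alpha_{p,q}}$ over the dyadic annuli $A_k=\{2^{-k}\le|h|<2^{-k+1}\}$, on each of which $\int_{A_k}dh/|h|^n$ is a dimensional constant, then yields $[v]_{\dot B^\alpha_{p,q}}\lesssim\|(g_k)_k\|_{\ell^q(L^p)}$ (with the obvious change when $q=\infty$). For (2) the same decomposition is used, but now pointwise in $x$: on $A_k$ one has $|h|^{-n-\alpha q}|\Delta_h v(x)|^q\le 2^{q-1}|h|^{-n}(g_k(x+h)^q+g_k(x)^q)$, and since $\int_{A_k}|h|^{-n}g_k(x+h)^q\,dh\lesssim\fint_{B(x,2^{1-k})}g_k^q\le M(g_k^q)(x)$ — $M$ being the Hardy--Littlewood maximal operator — summing in $k$ gives $\int_{\R^n}|h|^{-n-\alpha q}|\Delta_h v(x)|^q\,dh\lesssim\sum_k M(g_k^q)(x)$; I would then raise to the power $p/q$, integrate, and invoke the Fefferman--Stein vector-valued maximal inequality to conclude $[v]_{\dot F^\alpha_{p,q}}\lesssim\|(g_k)_k\|_{L^p(\ell^q)}$ for $1<q\le\infty$, handling $q\in\{1,\infty\}$ by a direct estimate.

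For the \emph{necessity}, the plan is to exhibit a canonical Hajlasz gradient built from averaged oscillations. Writing $v_{B(x,r)}=\fint_{B(x,r)}v$, I would set
\begin{gather*}
\operatorname{osc}_j v(x)=\fint_{B(x,2^{-j})}\fint_{B(x,2^{-j})}|v(z)-v(w)|\,dz\,dw,\\
g_k(x)=2^{k\alpha}\sum_{j\ge k-2}\operatorname{osc}_j v(x),
\end{gather*}
and first check that $(c_n^{-1}g_k)_k$ is a fractional $\alpha$-Hajlasz gradient for $v$: at common Lebesgue points $x,y$ with $2^{-k}\le|x-y|<2^{-k+1}$, both $B(x,2^{-k})$ and $B(y,2^{-k})$ sit inside $B:=B(x,2^{-k+2})$, so one decomposes $v(x)-v(y)$ through $v_{B(x,2^{-k})}$, $v_B$, $v_{B(y,2^{-k})}$ by the triangle inequality, bounds the two ``central'' differences by $\fint_B|v-v_B|\le\operatorname{osc}_{k-2}v(x)$ up to a dimensional factor, and telescopes $v(x)-v_{B(x,2^{-k})}=\sum_{j\ge k}(v_{B(x,2^{-j-1})}-v_{B(x,2^{-j})})$ with each term $\le2^n\operatorname{osc}_j v(x)$; this is the step that forces the sum to start at $j\ge k-2$. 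Then I would bound the relevant norm: using $\operatorname{osc}_j v(x)\le2\fint_{|h|<2^{-j}}|\Delta_h v(x)|\,dh$, decomposing $\{|h|<2^{-j}\}$ into dyadic shells $S_i=\{2^{-i-1}\le|h|<2^{-i}\}$ ($i\ge j$), and collapsing the resulting geometric double sum, one gets $g_k(x)\lesssim\sum_{i\ge k-2}2^{(k-i)\alpha}\big(2^{i\alpha}c_i(x)\big)$ with $c_i(x)=\fint_{S_i}|\Delta_h v(x)|\,dh$, i.e. $(g_k(x))_k$ is dominated by the discrete convolution of $(2^{i\alpha}c_i(x))_i$ with the $\ell^1$-sequence $(2^{-m\alpha}\mathbf 1_{m\ge-2})_m$ (this uses $\alpha>0$). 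A final application of Young's inequality for sequences — to $(\|g_k\|_{L^p})_k$ in case (1), pointwise in $x$ in case (2) — followed by Jensen's inequality (to replace $c_i$ by $(\fint_{S_i}|\Delta_h v|^q\,dh)^{1/q}$) and the comparison $2^{i\alpha}\simeq|h|^{-\alpha}$ on $S_i$, reduces everything to $[v]_{\dot B^\alpha_{p,q}}$, respectively $[v]_{\dot F^\alpha_{p,q}}$, and taking the infimum over all Hajlasz gradients closes both equivalences.

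The dyadic-shell bookkeeping and the geometric sums are routine. I expect the two delicate points to be, first, the verification of the pointwise Hajlasz inequality for the canonical gradient (the telescoping argument, which is what dictates the shift from $j\ge k$ to $j\ge k-2$), and second, and more seriously, the appeal to the Fefferman--Stein vector-valued maximal inequality in the ``if'' part of (2): it is this step that confines that argument to $1<p<\infty$ and, in its vector-valued form, to $1<q\le\infty$, so that the endpoints $q\in\{1,\infty\}$ must be dealt with by hand.
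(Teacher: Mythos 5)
First, a caveat about the comparison itself: the paper does not prove this theorem at all --- it is quoted from the reference \cite{KYZ} --- so there is no in-paper proof to measure yours against, and I can only assess your argument on its own terms. Your necessity construction (the canonical gradient $g_k=2^{k\alpha}\sum_{j\ge k-2}\operatorname{osc}_jv$, the telescoping of $v(x)-v_{B(x,2^{-k})}$ through dyadic averages, and the discrete Young convolution against the $\ell^1$ kernel $(2^{-m\alpha}\mathbf 1_{m\ge-2})_m$) is sound, as is the sufficiency for the Besov scale, where Fubini over the dyadic annuli is all that is needed. (Two smaller items you should not gloss over: the low-frequency terms $k\le 0$ of the canonical gradient, which are needed both to get $g_k\in L^p$ and to recover $\|v\|_{L^p}\lesssim\inf\|(g_k)_k\|$ in the stated norm equivalence, and the usual $q=\infty$ modifications.)

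The genuine gap is in the sufficiency half of part (2). After your pointwise estimate you must prove $\bigl\|\sum_kM(g_k^q)\bigr\|_{L^{p/q}}\lesssim\bigl\|\sum_kg_k^q\bigr\|_{L^{p/q}}$, and this is \emph{not} an instance of the Fefferman--Stein inequality: written for the functions $f_k=g_k^q$ it is the $\ell^1$-valued maximal inequality (equivalently, the vector-valued inequality for $M_q=(M(|\cdot|^q))^{1/q}$ with inner exponent exactly $q$), and that endpoint is false --- take $f_k=\chi_{[k,k+1]}$, $k=1,\dots,N$, in $\R$, and the left-hand side gains a factor $\log N$. The step can be repaired when $p\ge q$ by \emph{not} passing to the maximal function: keep the single-scale averages $u_k(x)=\fint_{B(x,2^{1-k})}g_k^q$ and dualize, $\sum_k\int u_k\psi=\sum_k\int g_k^q\,A_k\psi\le\int\bigl(\sum_kg_k^q\bigr)\,M\psi$, which bounds $\|\sum_ku_k\|_{L^{p/q}}$ for $p/q\ge1$. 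For $p<q$ (which the statement allows) even this breaks down, and the raw difference-quotient route seems to run out of steam; the proof in \cite{KYZ} avoids the problem by estimating smooth means instead, $|\varphi_k*v(x)-v(x)|\lesssim2^{-k\alpha}(g_k(x)+Mg_k(x))$, which involves only $L^1$-averages of $g_k$ so that Fefferman--Stein applies legitimately with inner exponent $q>1$, and then by invoking the (nontrivial) equivalence between the Littlewood--Paley and difference characterizations of $F^\alpha_{p,q}$. The same obstruction reappears at $q=\infty$ on the $F$-scale, where an essential supremum of $g_k(\cdot+h)$ over $h$ arises that no average controls, so ``by hand'' will not rescue that endpoint either without this change of strategy.
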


\subsection{Nonlinear elliptic equations in divergence form}


This section is devoted to recall some fundamentals results of $L^p_{loc}$-theory for solutions of nonlinear elliptic equations in divergence form that we shall use in what follows.  Our first result is a very well known higher integrability property, that we state in a version suitable for our purposes. 

\begin{thm}\label{highint}
Let $s\in[2,n]$, and let $\fA:\Omega\times\R^n\to\R^n$ satisfy ($\fA1$)--($\fA3$). Let $u\in W^{1,s}_{\mathrm{loc}}(\Omega)$ be a local solution of \eqref{defeqs}. If $G\in L^q_{loc}(\Omega)$ for some $q>s$, then there exists an exponent $t$, $s<t<q$ such that $Du\in L^t_{loc}(\Omega)$. Moreover, the following estimate
$$\left(\fint_{B_R}|Du|^t\,dx\right)^{\frac{1}{t}}\le C \left(\fint_{B_{2R}}|Du|^s\,dx\right)^{\frac{1}{s}}+\left(\fint_{B_{2R}}|G|^t\,dx\right)^{\frac{1}{t}},$$
holds for every ball $B_R\subset B_{2R}\Subset \Omega$.
\end{thm}
\noindent
For the proof we refer to \cite[Theorem 6.7, p. 204]{Giusti}. Next, we state a regularity result for solutions of homogenous non linear elliptic equations of the form
$$ \div {\cal{B}}(Du)=0$$
where $ \mathcal{B}:\R^n\to\R^n$ an {\emph{autonomous Carath\'eodory function with growth $s-1$}}. This means that 
\begin{enumerate}
\item[($\cB1$)] $ \langle \cB(\xi)-\cB(\eta),\xi-\eta\rangle \geq    \nu(\mu^2+|\xi|^2+|\eta|^2)^{\frac{s-2}{2}}|\xi-\eta|^2$, 
\item[($\cB2$)] $ |\cB(\xi)-\cB(\eta)|\leq L (\mu^2+|\xi|^2+|\eta|^2)^{\frac{s-2}{2}}|\xi-\eta|$ , and
\item[($\cB3$)] $ |\cB(\xi)|\leq \ell(\mu^2+|\xi|^2)^\frac{s-1}{2}$,
\end{enumerate}
for each $\xi,\eta\in \R^n$.  We recall the following.

\begin{thm}\label{liphold}
Let $\cB:\R^n\to\R^n$ be such that $(\cB1),(\cB2),(\cB3)$ hold, and $v\in W^{1,s}_{loc}(\Omega)$ be a solution of
$$\div\cB(Dv)=0\hspace{1cm} \mathrm{in}\,\, \Omega,$$
Then, for every ball $B\Subset \Omega$, we have
\begin{itemize}
\item $\sup_{x\in\lambda B}|Dv(x)|\leq \frac{C}{\diam(B) (1-\lambda)}\left(\fint_{B}(1+|Dv|^s)\right)^\frac1s$ for all $0<\lambda<1$.
\item $\fint_{\delta B}|Dv-(Dv)_{\delta B}|^s\leq C\,\delta^{\alpha s}\,\fint_B(1+|Dv|^s)$ \quad for all $0<\delta<1$ and some $\alpha>0$.
\end{itemize}\end{thm}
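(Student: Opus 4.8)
The statement to prove is Theorem~\ref{liphold}, which asserts two \emph{a priori} estimates -- a Lipschitz bound and a decay/Campanato-type estimate for the gradient -- for solutions of the autonomous equation $\div\cB(Dv)=0$ under the standard $s$-growth and ellipticity conditions $(\cB1),(\cB2),(\cB3)$.

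\textbf{Overall strategy.} These are classical facts from the regularity theory of autonomous quasilinear systems, essentially due to Uhlenbeck in the $p$-Laplacian case and extended to general $\cB$ with $s$-growth (see Giaquinta--Modica, or the monographs \cite{Gia}, \cite{Giusti}). The plan is to reduce everything to two ingredients: (i) local boundedness of $Dv$ via a Caccioppoli inequality for the derivatives $D_\gamma v$ together with De Giorgi / Moser iteration, and (ii) an excess-decay estimate obtained by differentiating the equation, freezing coefficients, and comparing with the linear(ized) constant-coefficient system.

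\textbf{Step 1: Higher integrability and local boundedness of $Dv$.} First I would recall that $v\in W^{1,s}_{loc}$ solving the equation automatically enjoys a reverse H\"older inequality for $|Dv|$ (Gehring's lemma), hence $Dv\in L^{s+\varepsilon}_{loc}$; but more is true. Because the equation is autonomous, one may differentiate it formally in the direction $e_\gamma$: setting $w=D_\gamma v$, the function $w$ is a weak solution of the linear equation $\div\big(D\cB(Dv)\,Dw\big)=0$, whose coefficients $a_{ij}(x)=D_j\cB_i(Dv(x))$ satisfy, by $(\cB1)$--$(\cB2)$, the ellipticity bounds $\nu(\mu^2+|Dv|^2)^{\frac{s-2}2}|\lambda|^2\le \langle a(x)\lambda,\lambda\rangle$ and $|a(x)|\le L(\mu^2+|Dv|^2)^{\frac{s-2}2}$. (To make this rigorous for merely $W^{1,s}$ solutions and for $\cB$ not $\mathcal C^1$, one works with the difference quotients $\Delta_{h,\gamma}v/|h|$, derives the Caccioppoli inequality at the level of difference quotients using the monotonicity $(\cB1)$ and growth $(\cB2)$, and passes to the limit; this is standard.) Testing with $\eta^2 (\mu^2+|Dv|^2)^{\frac{s-2}2}\,\big((\mu^2+|Dv|^2)^{1/2}-k\big)_+ \,D_\gamma v$-type test functions and summing over $\gamma$ yields a Caccioppoli inequality for the superlevel sets of $V:=(\mu^2+|Dv|^2)^{1/2}$, from which De Giorgi's iteration gives $V\in L^\infty_{loc}$ with the quantitative bound
\[
\sup_{\lambda B}(\mu^2+|Dv|^2)^{1/2}\le \frac{C}{\diam(B)^{n/s}(1-\lambda)^{n/s}}\Big(\int_B (\mu^2+|Dv|^2)^{s/2}\Big)^{1/s},
\]
which after normalizing by $|B|$ and absorbing $\mu\le 1$ is the first bullet. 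I would present the Caccioppoli step and cite \cite[Ch.~8]{Giusti} or \cite{Gia} for the iteration, rather than reproducing Moser/De Giorgi in detail.

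\textbf{Step 2: Excess decay.} With $\|Dv\|_{L^\infty(B')}<\infty$ in hand on some interior ball $B'$, the coefficients $a_{ij}=D_j\cB_i(Dv)$ in the differentiated equation are now bounded and uniformly elliptic with ellipticity constants comparable to $\nu(1+\|Dv\|_\infty^2)^{\frac{s-2}2}$ and $L(1+\|Dv\|_\infty^2)^{\frac{s-2}2}$ (so, with $s\ge 2$, genuinely uniformly elliptic). The vector field $Dv$ is then a bounded weak solution of a linear elliptic system in divergence form with bounded measurable coefficients; by the classical $\mathcal C^{0,\alpha}$ regularity for such systems with one-variable structure -- or, cleaner, by the comparison argument: freeze $a$ at a point, let $h$ solve the frozen constant-coefficient system with $h=Dv$ on $\partial(\delta B)$, use Campanato estimates for $h$ (decay of $\fint_{\theta B}|Dh-(Dh)_{\theta B}|^2$ like $\theta^2$), and control $Dv-h$ by the oscillation of the coefficients, which is itself controlled by the oscillation of $Dv$ -- one obtains, for some $\alpha\in(0,1)$ depending on $n,\nu,L,s$ (and on $\|Dv\|_\infty$ only through the ellipticity ratio, hence in fact only on $n,s$ together with $L/\nu$), the Morrey-type decay
\[
\fint_{\delta B}|Dv-(Dv)_{\delta B}|^s \le C\,\delta^{\alpha s}\,\fint_{B}(1+|Dv|^s),\qquad 0<\delta<1.
\]
Here I would use the $L^\infty$ bound from Step~1 to convert the natural $L^2$-excess decay into the $L^s$ statement via interpolation with the uniform bound. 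I would cite \cite{Gia} (Giaquinta--Modica type results) for this step.

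\textbf{Main obstacle.} The delicate point is the justification of the differentiated equation and of the Caccioppoli inequality for $Dv$ when $\cB$ is only assumed Carath\'eodory with $(\cB1)$--$(\cB3)$ and \emph{not} $\mathcal C^1$. This is handled entirely at the level of second difference quotients: one never differentiates $\cB$, only uses $\langle \cB(\xi)-\cB(\eta),\xi-\eta\rangle$ and $|\cB(\xi)-\cB(\eta)|$ bounds with $\xi=Dv(x+he_\gamma)$, $\eta=Dv(x)$, which give exactly the coercivity and boundedness needed for a Caccioppoli inequality on $\Delta_{h,\gamma}v$ uniformly in $h$; letting $h\to0$ puts $Dv\in W^{1,2}_{loc}$ with weighted bounds, and then the iteration proceeds. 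Since both bullets are standard and appear (in this or equivalent form) in \cite{Gia,Giusti}, in the paper this theorem is merely recalled; accordingly the proof I would write consists of the reduction to the differentiated equation, a one-line statement of the two Caccioppoli inequalities, and precise references for the De Giorgi iteration and the Campanato excess-decay, without grinding through the iterations themselves.
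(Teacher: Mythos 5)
Your outline is correct and follows exactly the route of the reference the paper itself invokes for this statement (Sections 8.3 and 8.7 of Giusti, formulas (8.104) and (8.106)): difference quotients in place of differentiating $\cB$, a Caccioppoli inequality for the (weighted) gradient, De Giorgi iteration for the $L^\infty$ bound, and a frozen-coefficient comparison for the excess decay. Since the paper merely recalls the theorem with that citation, your reduction plus precise references is an adequate proof at the same level of detail.
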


\noindent
For the proof we refer to Sections 8.3 and 8.7 in \cite{Giusti} or, more specifically to formulas (8.104) and (8.106), p.302-303 in \cite{Giusti}. From previous Theorem, one can easily deduce the following.

\begin{lem}\label{autonomousbvp}
Let $\cB:\R^n\to\R^n$ be such that $(\cB1),(\cB2),(\cB3)$ hold. Let $B\Subset\Omega$ be a ball, and let $w\in W^{1, s }_{loc}(\Omega)$. Then the problem
$$
\begin{cases}
\div\cB(Dv)=0&x\in B,\\v=w&x\in\partial B.
\end{cases}
$$
has a unique  solution  $v\in W^{1,s}(B)$. Moreover, one has:
\begin{itemize}
\item $\sup_{x\in\lambda B}|Dv(x)|\leq \frac{C}{\diam(B) (1-\lambda)}\left(\fint_{B}(1+|Dw|^s)\right)^\frac1s$\quad  for all $0<\lambda<1$.
\item $\fint_{\delta B}|Dv-(Dv)_{\lambda B}|^s\leq C\,\delta^{\alpha s}\,\fint_B(1+|Dw|^s)$ \quad for all $0<\delta<1$ and some $\alpha>0$.
\end{itemize}\end{lem}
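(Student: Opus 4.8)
The plan is to derive Lemma~\ref{autonomousbvp} from Theorem~\ref{liphold} essentially for free, once existence and uniqueness of $v$ are settled. First I would address \emph{existence and uniqueness}. The natural class to work in is the affine subspace $w+W^{1,s}_0(B)$, and the solution is the unique minimizer of a strictly convex functional, or equivalently the unique solution of the monotone operator equation $\int_B\langle\cB(Dv),D\varphi\rangle=0$ for all $\varphi\in W^{1,s}_0(B)$. Conditions $(\cB1)$ and $(\cB2)$ say that $\xi\mapsto\cB(\xi)$ is strictly monotone and has growth $s-1$, so $v\mapsto\int_B\langle\cB(Dv),D(\cdot)\rangle$ is a bounded, coercive (thanks to $(\cB1)$, which forces $\langle\cB(\xi),\xi\rangle\gtrsim|\xi|^s-1$ up to constants, via $\cB(0)$ being controlled by $(\cB3)$), strictly monotone operator from $w+W^{1,s}_0(B)$ into its dual; the Browder--Minty theorem then yields a unique $v\in W^{1,s}(B)$ with $v-w\in W^{1,s}_0(B)$, which is what ``$v=w$ on $\partial B$'' means. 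I would state this briefly and cite a standard monotone-operator reference, since it is routine.

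Next I would invoke interior regularity. Since $v$ solves $\div\cB(Dv)=0$ in $B$ in the weak sense and $v\in W^{1,s}(B)\subset W^{1,s}_{loc}(B)$, Theorem~\ref{liphold} applies with $\Omega$ replaced by the \emph{open} ball $B$: for every ball $B'\Subset B$ we get the Lipschitz bound and the Morrey-type decay on concentric shrinkings of $B'$. The only thing to reconcile is that Theorem~\ref{liphold} is phrased for a ball $B'$ compactly contained in the domain, whereas here the domain is $B$ itself; this is handled by applying the theorem on a slightly smaller concentric ball $(1-\epsilon)B\Subset B$ and letting $\epsilon\to0$, or more simply by just invoking the interior estimate on $\lambda B$ for fixed $\lambda<1$, which is exactly the form in which the two bullet points are stated. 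Concretely, applying the first bullet of Theorem~\ref{liphold} on a ball slightly larger than $\lambda B$ but still inside $B$ gives
$$\sup_{x\in\lambda B}|Dv(x)|\leq \frac{C}{\diam(B)(1-\lambda)}\left(\fint_{B}(1+|Dv|^s)\right)^\frac1s,$$
and similarly the second bullet gives the decay estimate with $(Dv)_{\lambda B}$ in place of the average over the shell.

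Finally I would replace $Dv$ by $Dw$ on the right-hand sides, which is the only genuinely substantive point. Because $v$ is the energy minimizer (or by testing the equation with $\varphi=v-w\in W^{1,s}_0(B)$ and using $(\cB1)$, $(\cB2)$, $(\cB3)$ together with Young's inequality), one gets the comparison estimate $\int_B(1+|Dv|^s)\leq C\int_B(1+|Dw|^s)$; dividing by $|B|$ turns this into $\fint_B(1+|Dv|^s)\leq C\fint_B(1+|Dw|^s)$. Substituting this into the two displayed inequalities yields exactly the two bullet points of the Lemma. I expect the main obstacle — really the only place demanding care — to be this energy comparison: one must check that the ellipticity and growth constants combine so that the cross terms coming from the degenerate weight $(\mu^2+|Dv|^2+|Dw|^2)^{(s-2)/2}$ can be absorbed, which is standard when $s\geq2$ (the weight is then bounded below away from the origin and the usual Young-with-$\epsilon$ absorption works) but would be worth one or two lines. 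Everything else is a direct quotation of Theorem~\ref{liphold}.
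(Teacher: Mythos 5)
Your proposal is correct and follows exactly the route the paper intends: the paper gives no written proof, stating only that the lemma ``can easily be deduced'' from Theorem~\ref{liphold}, and your three steps (existence/uniqueness by monotone-operator or direct methods, interior estimates from Theorem~\ref{liphold} applied on slightly smaller concentric balls, and the energy comparison $\int_B(1+|Dv|^s)\leq C\int_B(1+|Dw|^s)$ obtained by testing with $v-w$ and absorbing via Young's inequality) are precisely the missing details. The energy comparison is indeed the only substantive point, and your treatment of it is sound for $s\geq 2$.
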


\noindent
We conclude this section with a very well known iteration Lemma, that   finds an important application in the so called hole-filling method.  Its proof can be found for example in \cite[Lemma 6.1]{Giusti}.

\begin{lem}\label{holf} Let $h:[r, R_{0}]\to \mathbb{R}$ be a non-negative bounded function and $0<\vartheta<1$,
$A, B\ge 0$ and $\beta>0$. Assume that
$$
h(s)\leq \vartheta h(t)+\frac{A}{(t-s)^{\beta}}+B,
$$
for all $r\leq s<t\leq R_{0}$. Then
$$
h(r)\leq \frac{c A}{(R_{0}-r)^{\beta}}+cB ,
$$
where $c=c(\vartheta, \beta)>0$.
\end{lem}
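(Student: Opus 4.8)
The plan is to prove Lemma \ref{holf} by the standard geometric iteration over a sequence of radii increasing from $r$ to $R_0$, exploiting the contraction factor $\vartheta<1$ to sum a geometric series. Since the hypothesis only relates $h$ at two points $s<t$, the idea is to apply it along a carefully chosen chain of intermediate radii and then let the chain length tend to infinity, using the boundedness of $h$ to kill the tail.

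Concretely, I would fix a parameter $\tau\in(0,1)$, to be specified at the end, and set $t_0=r$ and $t_{i+1}=t_i+(1-\tau)\,\tau^i\,(R_0-r)$ for $i\ge 0$. A direct computation gives $t_i=r+(1-\tau^i)(R_0-r)$, so that $r\le t_i<t_{i+1}<R_0$ and $t_i\uparrow R_0$, while $t_{i+1}-t_i=(1-\tau)\tau^i(R_0-r)$. Applying the assumed inequality with $s=t_i$ and $t=t_{i+1}$ yields
$$
h(t_i)\le \vartheta\,h(t_{i+1})+\frac{A}{(1-\tau)^\beta\,\tau^{i\beta}\,(R_0-r)^\beta}+B .
$$
Iterating this bound from $i=0$ up to $i=N-1$, one obtains
$$
h(r)=h(t_0)\le \vartheta^N h(t_N)+\frac{A}{(1-\tau)^\beta(R_0-r)^\beta}\sum_{i=0}^{N-1}\bigl(\vartheta\,\tau^{-\beta}\bigr)^i+B\sum_{i=0}^{N-1}\vartheta^i .
$$

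Now I would choose $\tau\in(0,1)$ so that $\vartheta\,\tau^{-\beta}<1$; this is possible precisely because $\vartheta<1$, so that $\vartheta^{1/\beta}<1$ and any $\tau\in(\vartheta^{1/\beta},1)$ works, the choice depending only on $\vartheta$ and $\beta$. With this choice both geometric series converge: $\sum_{i\ge0}(\vartheta\tau^{-\beta})^i=(1-\vartheta\tau^{-\beta})^{-1}$ and $\sum_{i\ge0}\vartheta^i=(1-\vartheta)^{-1}$. Finally, since $h$ is bounded on $[r,R_0]$ and $0<\vartheta<1$, we have $\vartheta^N h(t_N)\to0$ as $N\to\infty$; letting $N\to\infty$ in the displayed inequality gives
$$
h(r)\le \frac{A}{(1-\tau)^\beta\,(1-\vartheta\tau^{-\beta})\,(R_0-r)^\beta}+\frac{B}{1-\vartheta},
$$
which is the claim with $c=\max\{(1-\tau)^{-\beta}(1-\vartheta\tau^{-\beta})^{-1},\,(1-\vartheta)^{-1}\}$, a constant depending only on $\vartheta$ and $\beta$.

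The argument has essentially no hard analytic content; the only points that require care, and which I would flag explicitly, are (i) that the boundedness hypothesis on $h$ is genuinely used, and only, to ensure $\vartheta^N h(t_N)\to0$, and (ii) that the admissible range for $\tau$ is nonempty exactly because the contraction constant satisfies $\vartheta<1$ — this is where that hypothesis enters. Everything else is a telescoping iteration plus summation of two convergent geometric series.
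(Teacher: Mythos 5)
Your proof is correct and is exactly the standard geometric-iteration argument of the reference the paper cites for this lemma (Giusti, Lemma 6.1): the choice $\tau\in(\vartheta^{1/\beta},1)$ making $\vartheta\tau^{-\beta}<1$, the telescoping along $t_i=r+(1-\tau^i)(R_0-r)$, and the use of boundedness only to kill $\vartheta^N h(t_N)$ are all as in the classical proof. Nothing to add.
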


\subsection{Hodge decomposition}

\noindent
The interested reader can check the contents of this section in the monograph \cite{IM}. We recall that for a vector field $F\in L^p(\mathbb{R}^n,\mathbb{R}^n)$, with $1<p<+\infty$, the Poisson equation $$\Delta w=\mathrm{div}F$$
is  solved by a function $w\in W^{1,p}$ whose gradient can be expressed in terms of the Riesz transform as follows
$$Dw=-(\mathbf{R}\otimes \mathbf{R})(F).$$
The tensor product operator $ \mathbf{R}\otimes \mathbf{R}$ is the $n\times n$ matrix whose entries are the second order Riesz transforms $R_j\circ R_k$ $(1\le j,k\le n)$ and therefore the above identity reads as
$$D_jw=-\sum_{k=1}^n R_jR_k F^k,$$
where $F^k$ denotes the $k-th$ component of the vector field $F$.  Setting $\mathbf{E}= -(\mathbf{R}\otimes \mathbf{R})$
and $\mathbf{B}=\mathbf{Id}-\mathbf{E}$
we then have that
$$F=\mathbf{E}(F)+\mathbf{B}(F).$$
By construction, $\mathbf{E}(F)$ is curl free and $\mathbf{B}(F)$ is divergence free.
Standard Calderon-Zygmund theory yields $L^p$ bounds for the operators $\mathbf{E}$ and $\mathbf{B}$, whenever $1<p<+\infty$. However, we will need a more precise estimate, which is contained in the following stability property of the Hodge decomposition.

\begin{lem}\label{hodget}
Let $w\in W^{1,p}(\R^n)$, and let  $1<p<\infty$. Then there exist vector fields $\E\in L^{p'}(\R^n)$ with $\mathrm{curl}(\E)=0$ and $\B\in L^{p'}(\R^n)$ with $\mathrm{div}(\B)=0$ such that
\begin{equation}\label{hodgell}
Dw|Dw|^{p-2}=\E+\B.
\end{equation}
Moreover
\begin{equation}\label{stihodge1l}
\|\E\|_{L^{p'}(\R^n)}\le C\, \|Dw\|^{p-1}_{L^{p}(\R^n)}
\end{equation}
and
\begin{equation}\label{stihodge2}
\|\B\|_{L^{p'}(\R^n)}\le C\,\max\{p-2,p'-2\}\,\|Dw\|^{p-1}_{L^{p}(\R^n)},
\end{equation}
where $C$ is a universal constant.
\end{lem}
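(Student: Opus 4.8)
The statement to prove is Lemma~\ref{hodget}, the stability of the Hodge decomposition for the nonlinear field $Dw|Dw|^{p-2}$. The plan is to first reduce the problem to the linear Calderón--Zygmund setting by writing $Dw|Dw|^{p-2}$ in its Hodge components. Concretely, apply the decomposition $F=\E(F)+\B(F)$ recalled above to the vector field $F=Dw|Dw|^{p-2}\in L^{p'}(\R^n)$ (note $F\in L^{p'}$ exactly because $w\in W^{1,p}$, since $|F|^{p'}=|Dw|^{(p-1)p'}=|Dw|^p$). This immediately gives \eqref{hodgell} with $\E:=\E(F)$ curl-free and $\B:=\B(F)$ divergence-free, together with the trivial bound $\|\E\|_{L^{p'}}\le \|\E\|_{\mathrm{op}}\,\|F\|_{L^{p'}}\le C\|Dw\|_{L^p}^{p-1}$, which is \eqref{stihodge1l} since the operator norm of $\E=-(\mathbf R\otimes\mathbf R)$ on $L^{p'}$ is bounded by a constant depending only on $n$ and $p'$. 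The only non-routine part is the \emph{sharp} bound \eqref{stihodge2}, where the constant must degenerate linearly in $\max\{p-2,p'-2\}$ rather than just being an unspecified $C_p$; this is the step I expect to be the main obstacle.

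For \eqref{stihodge2} the key observation is that when $p=2$ the field $Dw|Dw|^{p-2}=Dw$ is already a gradient, hence $\B=0$; so $\|\B\|_{L^{p'}}$ should be controlled by how far $p$ is from $2$. The standard way to quantify this (this is the Iwaniec--Martin/Iwaniec--Sbordone interpolation trick) is to regard $\B$ as the value at the relevant exponent of an operator that vanishes at $p=2$ and to use complex interpolation / analytic dependence on $p$. More concretely, one writes $\B = F - \E(F) = \B(F)$ and estimates the difference $Dw|Dw|^{p-2} - Dw\cdot(\text{something that is a pure gradient and agrees at }p=2)$. A cleaner route, and the one I would actually carry out: set $\B = (\Id-\E)(Dw|Dw|^{p-2})$ and observe $(\Id - \E)(Dw) = 0$; then
$$
\B = (\Id-\E)\big(Dw|Dw|^{p-2}-Dw\big)= (\Id-\E)\big(Dw(|Dw|^{p-2}-1)\big),
$$
but this naive splitting does not by itself produce the linear factor. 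The correct device is to differentiate the family $t\mapsto \B_t$ associated with exponents interpolating between $2$ and $p$ (equivalently between $2$ and $p'$), note $\B_2=0$, and integrate: $\|\B_p\|\le \int$ of a derivative whose size is $O(1)$, giving the factor $|p-2|$ (and by the dual formulation $|p'-2|$, hence $\max\{p-2,p'-2\}$). I would invoke the precise quantitative Hodge estimate as presented in the monograph \cite{IM} — this is essentially \cite[the sharp form of the Hodge decomposition]{IM} — rather than reprove it from scratch.

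In summary, the steps in order are: (1) verify $F=Dw|Dw|^{p-2}\in L^{p'}(\R^n)$ with $\|F\|_{L^{p'}}=\|Dw\|_{L^p}^{p-1}$; (2) apply the Hodge/Helmholtz splitting to $F$ to get \eqref{hodgell} with the stated curl-free and divergence-free properties; (3) bound $\|\E\|_{L^{p'}}$ by Calderón--Zygmund $L^{p'}$-boundedness of $-(\mathbf R\otimes \mathbf R)$, yielding \eqref{stihodge1l}; (4) for \eqref{stihodge2}, exploit that $\B$ vanishes identically when $p=2$ and use the analytic-interpolation argument (as in \cite{IM}) to extract the linear gain $\max\{p-2,p'-2\}$. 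The universality of $C$ follows since all operator norms involved depend only on $n$ once the exponent degeneration has been factored out. The main obstacle, as noted, is step (4): getting the \emph{linear} rather than merely continuous dependence of the divergence-free part on the exponent, which is exactly the content that makes this lemma more than a routine application of Calderón--Zygmund theory.
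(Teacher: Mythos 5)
Your proposal is correct and follows essentially the same route as the paper, which gives no self-contained argument but simply cites \cite[Theorem 4]{IS} for the stability estimate and \cite{kmel} for the universality of the constant; your identification of the key mechanism (that $\B$ annihilates gradients, so $\B_2=0$, and the linear factor $\max\{p-2,p'-2\}$ comes from analytic dependence on the exponent) is exactly the content of the cited results. The only loose end is that your step (3) yields a constant depending on $n$ and $p'$ rather than a universal one, which is precisely the point the paper delegates to \cite[Corollary 3]{kmel}.
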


\noindent
The proof of previous Lemma is contained in \cite[Theorem 4]{IS}.  The fact that the constant is independent of $n$ and  $p$ can be derived as in \cite[Corollary 3]{kmel}. We use the above Hodge decomposition to prove the following non-standard Caccioppoli inequality, which is well-known for the community and whose proof is included for the reader's convenience. 

\begin{lem}\label{nonstandardcaccioppoli}
Let $\cA$ be such that $(\cA1),(\cA3)$ hold. There exists a number $p_0=p_0(n,\nu,L)>2$ with the following property. If $p\in(p_0', p_0)$ and $u\in W^{1,p}_{loc}(\Omega)$ is a weak solution of \eqref{defeq} for some $G\in L^2_{loc}\cap L^p_{loc}$ then
$$
\int_{B_0}|Du|^p\leq C\left(\mu^p+\frac{1}{|B_0|^{p/n}}\int_{2B_0}|u|^p+\int_{2B_0}|G|^p\right)
$$
for every ball $B_0\subset 2B_0\subset\Omega$.
\end{lem}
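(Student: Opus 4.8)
The plan is to run the standard Caccioppoli/hole-filling argument but to compensate for the "wrong" exponent $p\neq 2$ via the stability of the Hodge decomposition (Lemma \ref{hodget}), so that the divergence-free error term can be absorbed when $p$ is close enough to $2$. Fix a ball $B_0\subset 2B_0\subset\Omega$, pick a cut-off $\eta\in C^\infty_c(2B_0)$ with $\eta\equiv 1$ on $B_0$, $0\le\eta\le 1$ and $|D\eta|\le c/r_{B_0}$, and set $w=\eta(u-a)$ where $a$ is a suitable constant (e.g. an average of $u$ on an annulus, or simply $a=0$ after translating), extended by zero to all of $\R^n$, so that $w\in W^{1,p}(\R^n)$. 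Apply Lemma \ref{hodget} to $w$ to write $Dw|Dw|^{p-2}=\E+\B$ with $\mathrm{curl}\,\E=0$, $\mathrm{div}\,\B=0$, and the bounds \eqref{stihodge1l}, \eqref{stihodge2}. Since $\E$ is curl-free with compact support considerations, $\E=D\varphi$ for some $\varphi\in W^{1,p'}(\R^n)$, and $\varphi$ is an admissible test function for the equation \eqref{defeq} (after checking the integrability: $D\varphi=\E\in L^{p'}$ pairs with $\cA(x,Du)\in L^p$ near $\mathrm{supp}\,\eta$ by $(\cA3)$ and $u\in W^{1,p}_{loc}$, and with $G\in L^p$).

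Next I would test \eqref{defeq} with $\varphi$:
$$\int \langle \cA(x,Du),D\varphi\rangle = \int\langle G,D\varphi\rangle,$$
and rewrite $D\varphi=\E=Dw|Dw|^{p-2}-\B$. On the region where $\eta=1$ one has $Dw=Du$, so the leading term produces $\int_{B_0}\langle\cA(x,Du),Du\rangle|Du|^{p-2}$, which by $(\cA1)$ (taking $\eta=0$ there, i.e. comparing with $\cA(x,0)$ and using $(\cA3)$ to handle $\cA(x,0)$, whose size is $\le\ell\mu$) is bounded below by $c\int_{B_0}|Du|^p$ minus lower-order terms involving $\mu^p$. The remaining contributions are: (i) the terms supported in $2B_0\setminus B_0$ where $Dw=\eta Du+(u-a)D\eta$, which after Young's inequality and Lemma \ref{hodget} give a piece $\vartheta\int_{2B_0}|Du|^p$ with small $\vartheta$ plus $c(\mu^p+|B_0|^{-p/n}\int_{2B_0}|u-a|^p+\int_{2B_0}|G|^p)$; (ii) the $\B$-term, $-\int\langle\cA(x,Du),\B\rangle+\int\langle G,\B\rangle$, which by $(\cA3)$, Hölder, and \eqref{stihodge2} is bounded by $C\max\{p-2,p'-2\}\|Dw\|_{L^p}^{p}$ plus a $G$-term — crucially the constant $C\max\{p-2,p'-2\}$ can be made $<1/2$ by choosing $p_0>2$ close enough to $2$ (this is where $p_0=p_0(n,\nu,L)$ is fixed, via the universal constant in Lemma \ref{hodget} and the ellipticity/boundedness constants).

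Collecting terms gives
$$\int_{B_0}|Du|^p\le \vartheta\int_{2B_0}|Du|^p + C\Big(\mu^p+\frac1{|B_0|^{p/n}}\int_{2B_0}|u-a|^p+\int_{2B_0}|G|^p\Big)$$
with $\vartheta<1$; running this over a family of concentric balls and applying the hole-filling iteration Lemma \ref{holf} removes the $\vartheta\int|Du|^p$ term. Finally, choosing $a=0$ (or bounding $\int|u-a|^p\le c\int|u|^p$ after the standard Poincaré-type manipulation, whichever the statement wants) yields the claimed inequality. The main obstacle is step (ii): one must verify that the universal constant in the Hodge stability estimate \eqref{stihodge2}, when multiplied by the structural constants from $(\cA1)$–$(\cA3)$, genuinely produces a coefficient strictly less than the ellipticity constant for $p$ in a nonempty interval around $2$ — i.e. that $p_0$ depends only on $n,\nu,L$ and not on the solution — together with the bookkeeping needed to make $\varphi$ a legitimate test function given only $u\in W^{1,p}_{loc}$ rather than $W^{1,2}_{loc}$.
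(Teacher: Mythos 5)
Your proposal follows essentially the same route as the paper's proof: localize with a cutoff $w=\eta u$, apply the stable Hodge decomposition of Lemma \ref{hodget} to $|Dw|^{p-2}Dw$, test the equation with the potential of the curl-free part, absorb the divergence-free part using the factor $\max\{p-2,p'-2\}$ for $p$ near $2$ (this is exactly how $p_0(n,\nu,L)$ is fixed in the paper), and finish with the hole-filling iteration of Lemma \ref{holf}. The minor variations (subtracting a constant $a$, which the paper does not do since the stated estimate involves $\int|u|^p$, and your explicit handling of $(\cA1)$ via comparison with $\cA(x,0)$) do not change the argument.
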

\begin{proof}
Let $B_r$ be a ball of radius $r$, such that $B_r\subset 2B_r\subset\Omega$. Choose radii $r<s<t<2r$, and let $\eta\in {\mathcal C}^\infty_c(\Omega)$ be a cut off function such that $\chi_{B_s}\leq\eta\leq \chi_{B_t}$, and $\|\nabla \eta\|_\infty\leq \frac{c}{t-s}$. We apply Lemma \ref{hodget} to $w=\eta u$, so that we can write
$$|Dw|^{p-2}\,Dw = \E+\B$$
with $\E, \B\in L^{p'}(\R^n)$, both supported on $B_t$, $\mathrm{div}(\B)=0$, $\mathrm{curl}(\E)=0$, and moreover
\begin{equation}\label{pbounds}\aligned
\|\E\|_{L^{p'}(B_t)}&\leq C\|Dw\|_{L^{p}(B_t)}^{p-1},\\
\|\B\|_{L^{p'}(B_t)}&\leq C\,\max\{p-2,p'-2\}\,\|Dw\|_{L^{p}(B_t)}^{p-1}.\\
\endaligned
\end{equation} 
From $\mathrm{curl}(\E)=0$ and $1<p'<\infty$ we know that there is $\varphi\in W^{1,p'}_0(B_t)$ such that $\E=D\varphi$.  Now we test \eqref{defeq} with $\varphi$, and obtain
$$
\int_{B_t} \langle\cA(x, Du),Dw\rangle |Dw|^{p-2}=\int_{B_t}\langle\cA(x,Du),\B\rangle+\int_{B_t} \langle G,D\varphi\rangle
$$
whence
$$
\int_{B_t} \langle\cA(x,Du),Du\rangle\,\eta|Dw|^{p-2}=-\int_{B_t}\langle\cA(x,Du),D\eta\rangle\,u\,|Dw|^{p-2}+\int_{B_t}\langle\cA(x,Du),\B\rangle+\int_{B_t} \langle G,D\varphi\rangle.
$$
Using now $(\cA1)$, $(\cA3)$ and the properties of $\eta$, we get
$$
\nu\int_{B_s}   |Du|^p\leq \ell\int_{B_t\setminus B_s}(\mu+|Du|)\,|D\eta|\,|u|\,|Dw|^{p-2}+\ell\int_{B_t}(\mu+|Du|)\,|\B|+\int_{B_t}|G|\,|D\varphi|.
$$
Now, since $w=\eta u$, Young's inequality tells us that
$$
\int_{B_t\setminus B_s}(\mu+|Du|)\,|D\eta|\,|u|\,|Dw|^{p-2}
\leq C(p)\int_{B_t\setminus B_s}|u|^p\,|D\eta|^{p}+C(p)\int_{B_t\setminus B_s}|Du|^p+ C(p)\mu^p\,|2B_r|
$$
Also, by estimate \eqref{stihodge2} and Young's inequality,
$$\aligned
\int_{B_t}(\mu+|Du|)\,|\B|
&\leq \|\mu+|Du|\|_{L^p(B_t)}\,\|\B\|_{L^{p'}(B_t)}\\
&\leq C\,\max\{p-2,p'-2\}\,\|\mu+|Du|\|_{L^p(B_t)}\,\|Dw\|_{L^{p}(B_t)}^{p-1}\\
&\leq C\,2^{p-2} \max\{p-2,p'-2\}\,\|\mu+|Du|\|_{L^p(B_t)}\,(\|u\,D\eta\|_{L^{p}(B_t)}^{p-1}+\| \eta\,Du\|_{L^{p}(B_t)}^{p-1})\\
&\leq C(p)\mu^p\,|2B_r|+C\,2^{p-2} \max\{p-2,p'-2\}\,\|Du\|_{L^p(B_t)}^p+C(p)\,\|u\,D\eta\|_{L^{p}(B_t)}^p \endaligned$$
where $C$ is the universal constant from Lemma \ref{hodget}. Finally, also by \eqref{stihodge1l} and Young's inequality we get
$$\aligned
\int_{B_t}|G|\,|D\varphi|
&\leq \|G\|_{L^p(B_t)}\,\|D\varphi\|_{L^{p'}(B_t)}\\
&\leq \varepsilon \|D\varphi\|_{L^{p'}(B_t)}^{p'}+C(\varepsilon,p)\|G\|_{L^p(B_t)}^p\\
&\leq C\varepsilon \|Dw\|_{L^{p}(B_t)}^{p}+C(\varepsilon,p)\|G\|_{L^p(B_t)}^p\\
&\leq C\,2^{p-1}\varepsilon \|Du\|_{L^{p}(B_t)}^{p}+C\,2^{p-1}\varepsilon \|u\,D\eta\|_{L^{p}(B_t)}^{p}+C(\varepsilon,p)\|G\|_{L^p(B_t)}^p,
\endaligned$$
where $\varepsilon>0$ will be chosen later. Putting this together, 
$$\aligned
\nu\int_{B_s}   |Du|^p
&\leq
C(p,\ell,\varepsilon)\int_{B_t}|u|^p\,|D\eta|^{p}+C(p,\ell)\int_{B_t\setminus B_s}|Du|^p+C(p)\mu^p\,|2B_r|\\
&+
C(\ell)\,2^{p-2}(\max\{p-2,p'-2\}+2\varepsilon)\,\|Du\|_{L^p(B_t)}^p
+C(\varepsilon)\|G\|_{L^p(B_t)}^p.\\
\endaligned$$
Adding $C(p,\ell)\int_{B_s}|Du|^p$ at both sides, and using the properties of $\eta$, we get
$$\aligned
(\nu+C(p,\ell))\int_{B_s}   |Du|^p
&\leq
\frac{C(p,\ell,\varepsilon)}{(t-s)^p}\int_{2B_r}|u|^p+C(p)\mu^p\,|2B_r|\\
&+\bigg(
C(p,\ell)+
C(\ell)\,2^{p-2}(\max\{p-2,p'-2\}+2\varepsilon\bigg)\,\|Du\|_{L^p(B_t)}^p
+C(\varepsilon)\|G\|_{L^p(B_t)}^p.
\endaligned$$
Above, it is clear that one can always attain
$$
C(\ell)\,2^{p-2}( \max\{p-2,p'-2\}+\varepsilon)\,\leq \frac{\nu}2,
$$
if $\varepsilon>0$ is chosen small enough, and if $p$ is chosen close enough to $2$. We write this as $p\in(p_0', p_0)$. At this point we can use the iteration Lemma \ref{holf} to finish the proof.
\end{proof}

\noindent
The number $p_0$ was precisely described in \cite{AIS} when $n=s=2$, and is unknown otherwise.

\subsection{Maximal functions}
\noindent
Let $1\leq s<\infty$, and let $u\in L^s_{loc}(\R^n; \R)$. We will denote 
$$\M_{s}(u)(x)=\sup_{r>0}\left(\fint_{B(x,r)}|u|^s\right)^\frac1s,$$
When $s=1$ this is the classical Hardy-Littlewood maximal operator. We will also denote
$$\M^\sharp_{s}(u)(x)=\sup_{r>0}\left(\fint_{B(x,r)}|u-u_{B(x,r)}|^s\right)^\frac1s$$
$$\M^\sharp_{s,R}(u)(x)=\sup_{0<r<R}\left(\fint_{B(x,r)}|u-u_{B(x,r)}|^s\right)^\frac1s$$
When $s =1$ they go back to the well-known Fefferman-Stein sharp maximal function. These operators are classical tools in harmonic analysis, we refer the interested reader to \cite{Gr, Gr2}.\\
\\
The following lemma is proven in \cite{KZ} for $s=1$. Its proof for $s>1$ follows similarly. 
 
\begin{lem}\label{sharpmax}
Let $1\leq s<q< \infty$, and let $u\in L^s(\R^n)$.
\begin{itemize}
\item[(i)] One has $\|\M_su\|_{L^q(\R^n)}\leq C(n,s,q)\|u\|_{L^q(\R^n)}$.
\item[(ii)] There exists a constant $k_0=k_0(n,s,q)\geq 2$ such that if $u$ is supported on a ball $B(x_0,R)$ then 
$$\|\M^\sharp_{s,k_0 R}u\|_{L^q(B(x_0, k_0 R))}\geq C(n,s,q)\|u\|_{L^q(B(x_0,R))}.$$
\end{itemize}
\end{lem}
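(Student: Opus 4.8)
The plan is to prove both statements by reducing them to classical facts about the Hardy--Littlewood maximal operator and the Fefferman--Stein sharp function, interpolated from the $s=1$ case quoted from \cite{KZ}. For part (i), I would start from the pointwise inequality $\M_s u = \bigl(\M_1(|u|^s)\bigr)^{1/s}$, which is immediate from the definition. Since $q>s$, the exponent $q/s$ is strictly larger than $1$, so the classical strong $(q/s,q/s)$ boundedness of $\M_1$ applies to the function $|u|^s$, giving
$$\|\M_s u\|_{L^q(\R^n)}^s = \bigl\|\M_1(|u|^s)\bigr\|_{L^{q/s}(\R^n)} \leq C(n,q/s)\,\bigl\||u|^s\bigr\|_{L^{q/s}(\R^n)} = C(n,q/s)\,\|u\|_{L^q(\R^n)}^s,$$
and taking $s$-th roots yields the claim with $C=C(n,s,q)$. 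This step is entirely routine.

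For part (ii), the idea is the same: write $\M^\sharp_{s,R} u$ in terms of a standard sharp-type maximal function of $|u|^s$, or more directly adapt the proof from \cite{KZ} for $s=1$. Concretely, fix the ball $B(x_0,R)$ and assume $u$ is supported there. The key elementary observation is that for any ball $B(x,r)$ one has $\bigl(\fint_{B(x,r)}|u-u_{B(x,r)}|^s\bigr)^{1/s}\simeq \inf_{c\in\R}\bigl(\fint_{B(x,r)}|u-c|^s\bigr)^{1/s}$, with constants depending only on $s$; this lets one work with arbitrary constants rather than averages. Then, following the $s=1$ argument, one covers $B(x_0,R)$ by a bounded number of balls and uses a good-$\lambda$ or direct duality argument to recover the full $L^q$-norm of $u$ from the truncated sharp function at scale $k_0R$ over the enlarged ball $B(x_0,k_0R)$; the constant $k_0$ appears precisely to guarantee that balls $B(x,r)$ with $x\in B(x_0,k_0R)$ and $r$ comparable to $R$ still "see" a definite fraction of the support of $u$, so that the oscillation of $u$ on such a ball is bounded below by a multiple of a local average of $|u|$. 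Since $q>s\geq 1$, the Fefferman--Stein inequality $\|f\|_{L^q}\lesssim \|\M^\sharp_1 f\|_{L^q}$ (valid for $f\in L^{q}$ with, say, $f$ vanishing at infinity) applies to $f=u$, and combining it with the localization just described produces the stated lower bound.

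The main obstacle I anticipate is step (ii), specifically the bookkeeping that produces the explicit dimensional-and-exponent-dependent constant $k_0$ and guarantees the reverse inequality rather than just the trivial direction. The subtlety is that $\M^\sharp_{s,k_0R}$ is a \emph{truncated} sharp function: one must show that restricting to radii $r<k_0R$ loses nothing, which is where the support condition on $u$ and the enlargement factor $k_0$ interact. One has to check that for points $x$ far from $\supp u$ the truncated sharp function still controls $|u|$ through balls that reach into $B(x_0,R)$, and that the contribution of large radii (which are cut off) was negligible anyway because $u$ is compactly supported. This is exactly the argument carried out in \cite{KZ} for $s=1$; for general $s\in[1,q)$ nothing new happens beyond replacing $|u|$ by $|u|^s$ and $q$ by $q/s$ throughout, so I would present it as "the proof follows that of \cite{KZ} verbatim, reducing to the case $s=1$ by the substitution $u\mapsto |u|^s$, $q\mapsto q/s$", and only spell out the reduction identities $\M_s u=(\M_1|u|^s)^{1/s}$ and the comparability of $\M^\sharp_{s,R}u$ with the corresponding object for $|u|^s$.
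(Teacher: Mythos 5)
The paper gives no written proof of this lemma: it simply states that the case $s=1$ is proven in \cite{KZ} and that ``its proof for $s>1$ follows similarly,'' so the comparison here is necessarily against that intended reduction. Your part (i) is correct and is exactly the right argument: $\M_s u=\bigl(\M_1(|u|^s)\bigr)^{1/s}$ pointwise, and the strong $(q/s,q/s)$ bound for the Hardy--Littlewood maximal operator (valid since $q/s>1$) gives the claim.

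For part (ii), however, the specific reduction you propose has a genuine gap. You assert ``the comparability of $\M^\sharp_{s,R}u$ with the corresponding object for $|u|^s$,'' i.e.\ that the substitution $u\mapsto|u|^s$, $q\mapsto q/s$ carries the sharp function over as it does the plain maximal function. This is false: mean oscillation does not commute with taking $s$-th powers, and there is no pointwise two-sided comparison between $\bigl(\fint_B|u-u_B|^s\bigr)^{1/s}$ and $\fint_B\bigl||u|^s-(|u|^s)_B\bigr|$ without extra information such as an $L^\infty$ bound (the natural inequality $\bigl||a|^s-|b|^s\bigr|\leq s(|a|+|b|)^{s-1}|a-b|$ introduces exactly such a factor). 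The good news is that the statement you need is only a \emph{lower} bound, and for that there is a one-line fix that is simpler than anything you wrote: since $s\geq 1$, Jensen's inequality gives $\bigl(\fint_B|u-u_B|^s\bigr)^{1/s}\geq\fint_B|u-u_B|$ for every ball, hence $\M^\sharp_{s,k_0R}u\geq\M^\sharp_{1,k_0R}u$ pointwise, and part (ii) for general $s$ follows immediately from the $s=1$ case of \cite{KZ} with the same $k_0=k_0(n,q)$. (Alternatively, one can rerun the \cite{KZ} localization argument with $L^s$ averages in place of $L^1$ averages, using, as you note, that averages can be replaced by arbitrary constants up to a factor depending only on $s$; but the Jensen monotonicity makes this unnecessary.) Your description of the role of $k_0$ and of the truncation is accurate, but it belongs to the quoted $s=1$ result rather than to the $s>1$ extension.
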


\section{$VMO$ coefficients in $\R^n$}\label{sectvmo}

\noindent
In this section, we assume that $n\geq 2$, and that $\fA:\Omega\times  \R^{n} \to  \R^{n}$ is a Caratheodory function  such that assumptions $(\fA1),(\fA2),(\fA3)$ in the introduction are satisfied.  We also require a control on the oscillations, which is described as follows. Given a ball $B\subset\Omega$, let us denote
$$\fA_B(\xi)=\fint_B\fA(x,\xi)\,dx$$
One can easily check that the operator $\fA_B(\xi)$ also satisfies assumption $(\fA1),(\fA2),(\fA3)$. Now set
\begin{equation}\label{maxosc}
V(x,B)=\sup_{\xi\neq 0}\frac{\left|\fA(x,\xi)-\fA_B(\xi)\right|}{(\mu^2+|\xi|^2)^\frac{s-1}2},
\end{equation}
for $x\in\Omega$ and $B\subset\Omega$.  If $\fA$ is given by the weighted ${s}$-laplacian, that is $\fA(x,\xi)=\gamma(x)\,|\xi|^{s-2}\xi$, one obtains
$$V(x,B)=|\gamma(x)-\gamma_B|,$$
where $\gamma_B=\fint_B \gamma(y)dy$, and so any reasonable $VMO$ condition on $\gamma$ requires that the mean value of $V(x,B)$ on $B$ goes to $0$ as $|B|\to 0$.  Our $VMO$ assumption on general Carath\'eodory functions $\fA$ consists of a uniform version of this fact. Namely, we will say that $x\mapsto\fA(x,\xi)$ is \emph{locally uniformly in $VMO$} if for each compact set $K\subset\Omega$ we have that
\begin{equation}\label{vmocond}
\lim_{R\to 0}\sup_{r(B)<R} \sup_{c(B)\in K}\fint_{B}V(x,B)\,dx=0.
\end{equation}
Here $c(B)$ denotes the center of the ball $B$, and $r(B)$ its radius. \\
\\
The main result in this section is an a priori estimate for weak solutions belonging to $W^{1,q}$ for some $q>s$. It is a local nonlinear version of the classical Theorem by Iwaniec and Sbordone \cite{IS}. Our proof relies on arguments similar to those used in \cite{KZ}.
\noindent

\begin{thm}\label{vmolocal}
Assume that $\fA$ satisfies $(\fA1),(\fA2),(\fA3)$ and that it is locally uniformly in $VMO$, and let $q>s$. There exists $\lambda=\lambda(n,s,q)>1$ with the following property. If $x_0\in\Omega$ then there is a number $d_0>0$ (depending on $\nu, \ell, L$, $s,q,n$, $\fA$ and $x_0$) such that if $u\in W^{1,q}(\Omega;\R)$ is such that
\begin{equation}\label{equation}
\div \fA(x,Du)= \div( | G |^{s-2} G )\hspace{1cm}\text{ weakly in }\Omega
\end{equation}
for some $G\in L^q_{loc}(\Omega; \R)$, then the estimate
\begin{equation}\label{localvmobound}
\fint_{\B_0}|Du|^q\leq C \left(\mu^q+\frac1{d^q}\fint_{\lambda\B_0}|u|^q + \fint_{\lambda\B_0}|G|^q\right)
\end{equation}
holds whenever $0<d<d_0$, $\B_0=B(x_0,d)$ and $\lambda\B_0\subset\Omega$.
\end{thm}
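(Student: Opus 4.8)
The plan is to establish the a priori estimate \eqref{localvmobound} by a comparison argument combined with a bootstrap on the Fefferman–Stein sharp maximal function, in the spirit of \cite{KZ} and \cite{IS}. First I would fix the ball $\B_0=B(x_0,d)$ with $\lambda\B_0\subset\Omega$, multiply $u$ by a cutoff to localize, and for every ball $B$ with center near $x_0$ and small radius compare the solution $u$ with the solution $v$ of the frozen autonomous problem $\div\fA_B(Dv)=0$ in $B$ (that is, with the averaged operator $\fA_B$), taking $v=u$ on $\partial B$; existence and uniqueness of $v$ and its Lipschitz and decay estimates come from Lemma \ref{autonomousbvp}. The key comparison estimate is obtained by testing the weak formulations of the equations for $u$ and $v$ against $u-v$: using $(\fA1)$ for the ellipticity, $(\fA2)$, $(\fA3)$ to absorb the growth, the definition \eqref{maxosc} of $V(x,B)$ to control the term $\int_B\langle\fA(x,Du)-\fA_B(Du),D(u-v)\rangle$, and the $VMO$ hypothesis \eqref{vmocond} to make the oscillation factor $\fint_B V(x,B)\,dx$ as small as we wish once $d<d_0$. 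Here a subtlety is that $V(x,B)$ enters with an exponent, so one needs Hölder with a slightly higher integrability exponent for $Du$ than $s$ — this is exactly where Theorem \ref{highint} is used, to gain $Du\in L^t_{loc}$ for some $t>s$, and the loss is reabsorbed because the $VMO$ smallness beats the fixed higher-integrability norm.

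Next I would convert the comparison estimate into a pointwise bound on the sharp maximal function of $|Du|^s$ (or of $Du$ in the appropriate sense): on each small ball $B$ one writes $Du = Dv + D(u-v)$, estimates $\fint_B|Dv-(Dv)_B|^s$ by the decay estimate $\delta^{\alpha s}\fint(1+|Dw|^s)$ from Lemma \ref{autonomousbvp}, and estimates $\fint_B|D(u-v)|^s$ by the comparison term, which is small (a power of $d$ plus the $VMO$ modulus) times an average of $\mu^s+|Du|^s$ plus the contribution of $|G|^s$. This yields an inequality of the schematic form
$$
\M^{\sharp}_{s,R}\big(|Du|^s\chi\big)(x)\leq \varepsilon\,\M_{s}\big((\mu^s+|Du|^s)\chi\big)(x)+C_\varepsilon\,\M_{s}\big((\mu^s+|G|^s)\chi\big)(x),
$$
valid after multiplying $Du$ by a suitable cutoff and with $\varepsilon$ small once $d_0$ is small; the parameter $\lambda$ appears because the maximal functions on the right must be taken over a dilated ball. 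Then I would apply Lemma \ref{sharpmax}: part (ii) bounds $\|Du\chi\|_{L^q}$ below by $\|\M^{\sharp}_{s,k_0R}\|_{L^q}$ of the localized gradient, and part (i) bounds $\|\M_s\|_{L^q}$ above by the $L^q$ norm; choosing $\varepsilon$ small enough, the term $\varepsilon\|\M_s(|Du|^s\chi)\|$ is absorbed into the left-hand side, leaving
$$
\|Du\,\chi\|_{L^q}\leq C\big(\|\mu\|_{L^q(\lambda\B_0)}+\|G\|_{L^q(\lambda\B_0)}+\|u\,D\chi\|_{L^q(\lambda\B_0)}\big),
$$
which after rescaling the cutoff (so that $|D\chi|\lesssim 1/d$) gives exactly \eqref{localvmobound} with the factor $1/d^q$ in front of the $u$-term. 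The finiteness of $\|Du\chi\|_{L^q}$ needed to run the absorption is guaranteed by the hypothesis $u\in W^{1,q}(\Omega)$.

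The main obstacle I expect is the comparison step in the fully nonlinear, non-$C^1$ setting. Unlike the linear case of \cite{IS}, one cannot differentiate the equation, and unlike \cite{KZ} the principal part is genuinely nonlinear in $\xi$ with only monotonicity $(\fA1)$ and Lipschitz growth $(\fA2)$ available; handling the cross term $\int_B\langle \fA_B(Du)-\fA_B(Dv),D(u-v)\rangle$ together with $\int_B\langle \fA(x,Du)-\fA_B(Du),D(u-v)\rangle$ requires carefully balancing the monotonicity lower bound (which degenerates like $(\mu^2+|Du|^2+|Dv|^2)^{(s-2)/2}$ when $s>2$) against the $VMO$-small upper bound, and this is precisely where the higher-integrability exponent $t>s$ from Theorem \ref{highint} must be tracked through every Hölder inequality so that the exponents close. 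A second, more technical point is the bookkeeping of the dilation constants: the radius $R$ up to which the sharp maximal function is taken, the constant $k_0$ from Lemma \ref{sharpmax}, and the constant $\lambda$ in the statement must be chosen consistently so that all comparison balls used stay inside $\lambda\B_0\subset\Omega$, and so that the $VMO$ smallness is uniform over all of them — this forces $d_0$ to depend on the compact neighborhood of $x_0$, on $\fA$ through the modulus in \eqref{vmocond}, and on the structural constants, exactly as stated.
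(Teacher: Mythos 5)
Your proposal follows essentially the same route as the paper: comparison with the averaged (frozen) operator $\fA_{B}$ via Lemma \ref{autonomousbvp}, H\"older against the oscillation $V(x,B)$ using the higher integrability exponent $t>s$ from Theorem \ref{highint}, a pointwise sharp-maximal inequality absorbed through Lemma \ref{sharpmax}, and smallness coming jointly from the decay parameter $\delta$ and the $VMO$ modulus for $d<d_0$. The only step you gloss over is that the localization $w=\eta^{s'}u$ leaves an annulus term $\int_{\B_r\setminus\B_\rho}|Du|^q$ with a non-small constant, which the paper removes by hole-filling and the iteration Lemma \ref{holf} rather than by direct absorption.
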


\begin{proof}[Proof of Theorem \ref{vmolocal}]
Let $k_0\geq 2$ be the constant in Lemma \ref{sharpmax}.  Let $\delta\in (0,1)$ to be determined later. We are given a ball $\B_0=B(x_0,d)$, such that $\tilde\B_0=(1+\frac2\delta)k_0\B_0\subset\Omega$. 
\\
The first step consists of localizing the problem. This is done by  choosing arbitrary radii $0<\frac{d}{2}<\rho<r<d$, balls $\B_\rho=B(x_0,\rho)$ and $\B_r=B(x_0,r)$, and a cut off function $\eta:\R^n\to\R$ such that $\eta\in C^\infty_c(\R^n)$, $\chi_{\B_\rho }\leq \eta\leq \chi_{\B_r }$ and $\| D \eta\|_\infty\leq\frac{c(n)}{r-\rho}$. Set $w= \eta^{s'}\,u$. Then clearly $w\in W^{1,q}(\R^n)$ has compact support in $\B_r$ and we have that
\begin{eqnarray}\label{localisation}
	\div \fA(x, Dw)&=&\div \big(\fA(x,Dw)-\fA(x,\eta^{s'}Du)\big)+ \div\big(\fA(x,\eta^{s'}Du)-\eta^s \fA(x,Du)\big) \cr\cr
		&+&\div(\eta^s \fA(x,Du))\cr\cr
		&=&\div (\fA(x,Dw)-\fA(x,\eta^{s'}Du))+\div\big(\fA(x,\eta^{s'}Du)-\eta^s \fA(x,Du)\big)\cr\cr
&+& \eta^s \div(\fA(x,Du))+D(\eta^s)\fA(x,Du).
\end{eqnarray}
For each $y\in k_0\B_0$ and each $0<R<\frac{k_0 d}{\delta}$ we set $B_R=B(y,R)$. Then  $B_R\subset (1+\frac1\delta)k_0\B_0\subset \Omega$, and thus the quantity
$$
\fA_{B_R}(\xi)=\fint_{B_R} \fA(x,\xi)\,dx
$$
is well defined. Let $v$ be the unique solution to the following Dirichlet problem
\begin{equation}\label{Dirichlet}
\begin{cases}
\div\fA_{B_R}(Dv)=0&x\in B_R\\v=w&x\in\partial B_R.
\end{cases}	
\end{equation}
Now, we multiply both sides of the  equality  \eqref{localisation} by $v-w$ and, since $v-w$ vanishes outside of $B_R$, we can integrate by parts thus  getting
\begin{eqnarray*}
\int_{B_R}\Big\langle \fA(x, Dw), Dv-Dw\Big\rangle &=&	\int_{B_R}\Big\langle \fA(x,Dw)-\fA(x,\eta^{s'}Du), Dv-Dw\Big\rangle\cr\cr
&+& \int_{B_R}\Big\langle \fA(x,\eta^{s'}Du)-\eta^s \fA(x,Du), Dv-Dw\rangle \cr\cr
&+& \int_{B_R}\Big\langle \fA(x,Du),  D (\eta^s(v-w))\Big\rangle- \int_{B_R}D(\eta^s)\fA(x,Du)(v-w)\cr\cr
&=&\int_{B_R}\Big\langle \fA(x,Dw)-\fA(x,\eta^{s'}Du), Dv-Dw\Big\rangle\cr\cr
&+&  \int_{B_R}\Big\langle \fA(x,\eta^{s'}Du)-\eta^s \fA(x,Du), Dv-Dw\rangle \cr\cr
&+& \int_{B_R}|G|^{s-2}\Big\langle G,  D (\eta^s(v-w))\Big\rangle- \int_{B_R}D(\eta^s)\fA(x,Du)(v-w),
\end{eqnarray*}
where, in the last equality, we used that $u$ is a solution of the equation  \eqref{equation}. On the other hand, since $v$ is a solution of the Dirichlet problem \eqref{Dirichlet}, we also have
$$\int_{B_R} \Big\langle\fA_B(D v)-\fA_B(Dw), Dv-Dw\Big\rangle = \int_{B_R}\Big\langle \fA_B(Dw), Dw-Dv\Big\rangle$$
and so
\begin{eqnarray*}
\int_{B_R} \Big \langle\fA_B(D v)-\fA_B(Dw), Dv-Dw\Big\rangle &=& \int_{B_R} \Big\langle \fA_B(Dw)-\fA(x,Dw), Dw-Dv\Big\rangle\cr\cr
&+&	\int_{B_R}\Big\langle \fA(x,Dw)-\fA(x,\eta^{s'}Du), Dw-Dv\Big\rangle\cr\cr
&+& \int_{B_R}\Big\langle \fA(x,\eta^{s'}Du)-\eta^s \fA(x,Du), Dw-Dv\rangle \cr\cr
&+& \int_{B_R}|G|^{s-2}\Big\langle G,  D (\eta^s(w-v))\Big\rangle- \int_{B_R}D(\eta^s)\fA(x,Du)(w-v)\cr\cr
&\le&\int_{B_R}|\fA_B(Dw)-\fA(x,Dw)|| Dw-Dv|\cr\cr
&+&	\int_{B_R}| \fA(x,Dw)-\fA(x,\eta^{s'}Du)|| Dw-Dv|\cr\cr
&+&  \int_{B_R}| \fA(x,\eta^{s'}Du)-\eta^s \fA(x,Du)|\,| Dw-Dv|  \cr\cr
&+& \int_{B_R}|G|^{s-1}| D (\eta^{s}(w-v))|+ \int_{B_R}| D (\eta^{s})||\fA(x,Du)||w-v|.
\end{eqnarray*}
We write previous inequality as follows
$$ I_0\le I_1+I_2+I_3+I_4+I_5$$
and we estimate $I_j$ separately. Since $s\ge 2$, by virtue of the ellipticity assumption ($\fA1$), we have that
\begin{equation}\label{I0}
\nu\int_{B_R}|Dv-Dw|^s
\leq \nu\,\int_{B_R}|Dv-Dw|^2\,(\mu^2+|Dv|^2+|Dw|^2)^\frac{s-2}{2}\le I_0
\end{equation}
By the definition of $V(x,B)$ in \eqref{maxosc}, thanks to the assumption \eqref{vmocond} and Young's and H\"older's inequalities, we estimate $I_1$ as follows
\begin{eqnarray}
\label{I1}
I_1&\le& \int_{B_R}V(x,B)\,(\mu^2+|Dw|^2)^\frac{s-1}{2}\,|Dw-Dv|\cr\cr
&\leq& \varepsilon\,\int_{B_R}|Dv-Dw|^s+C(\varepsilon, s) \int_B V(x,B)^{s'}\,(\mu^2+|Dw|^2)^\frac{s}{2}\cr\cr
&\leq &\varepsilon\,\int_{B_R}|Dv-Dw|^s+C(\varepsilon, s) \left(\int_{B_R} V(x,B)^\frac{ts'}{t-s}\right)^\frac{t-s}{t}\,\left(\int_{B_R}(\mu^2+|Dw|^2)^\frac{t}{2}\right)^\frac{s}{t}\cr\cr
&\leq& \varepsilon\,\int_{B_R}|Dv-Dw|^s+C(\varepsilon, s,t,\ell) \left(\int_{B_R} V(x,B)\right)^\frac{t-s}{t}\,\left(\int_{B_R}(\mu^2+|Dw|^2)^\frac{t}{2}\right)^\frac{s}{t},	
\end{eqnarray}	
where $t>s$ is the exponent determined in Lemma \ref{highint},  $\varepsilon>0$ is a parameter that will be chosen later and we used that  the function $V(x,B)$, by virtue of assumption ($\fA3$), is bounded in $\Omega$. By assumption ($\fA2$), the definition of $w$, Young's inequality and the properties of $\eta$, we have
\begin{eqnarray}\label{I2}
I_2 &\le & L \int_{B_R}|Dw-\eta^{s'}Du|(\mu^2+|Dw|^2+|\eta^{s'}Du|^2)^{\frac{s-2}{2}}|Dv-Dw|\cr\cr
&=&L \int_{B_R}|D(\eta^{s'})u|(\mu^2+|Dw|^2+|Dw-D(\eta^{s'})u|^2)^{\frac{s-2}{2}}|Dv-Dw|\cr\cr
&\le& c(s,L)\int_{B_R}|D(\eta^{s'})u|(\mu+|Dw|+|D(\eta^{s'})u|)^{s-2}|Dv-Dw|\cr\cr
&\le& c(s,L)\int_{B_R}|D(\eta^{s'})u|^{s-1}|Dv-Dw|+c(s)\int_{B_R}|D(\eta^{s'})u|(\mu+|Dw|)^{s-2}|Dv-Dw|\cr\cr
&\le& \varepsilon \int_{B_R}|Dv-Dw|^s+\sigma\int_{B_R}(\mu+|Dw|)^s + \frac{c(\varepsilon,\sigma,s)}{(r-\rho)^s}\int_{B_R}|u|^{s},
\end{eqnarray}
where $\varepsilon,\sigma>0$ will be chosen later. We now proceed with the estimate of $I_3$. The properties of $\eta$ and Young's inequality yield
\begin{eqnarray}\label{I3}
I_3 &\le &  \int_{B_R\setminus \B_r}|\fA(x,0)||Dv-Dw|+\int_{B_R\cap (\B_r\setminus \B_\rho)}|\fA(x,\eta^{s'}Du)-\eta^s \fA(x,Du)|| Dv-Dw|\cr\cr
&\le & C(\varepsilon)\int_{B_R\setminus \B_r}| \fA(x,0)|^{s'}+C(\varepsilon)\int_{B_R\cap (\B_r\setminus \B_\rho)}|\fA(x,\eta^{s'}Du)- \fA(x,Du)|^{s'}\cr\cr
&+& C(\varepsilon)\int_{B_R\cap (\B_r\setminus \B_\rho)}|\fA(x,Du)- \eta^{s}\fA(x,Du)|^{s'}
+\varepsilon \int_{B_R}|Dv-Dw|^s\cr\cr
&\le & C(\varepsilon,\ell,s) \mu^s\,R^n+C(\varepsilon,L,s)\int_{B_R\cap(\B_r\setminus \B_\rho)}|\eta^{s'}Du- Du|^{s'}(\mu+|\eta^{s'}Du|+|Du|)^{s'(s-2)}\cr\cr
&+& C(\varepsilon,\ell,s)\int_{B_R\cap (\B_r\setminus \B_\rho)}(1-\eta^{s})^{s'}
|Du|^{s}
+\varepsilon \int_{B_R}|Dv-Dw|^s\cr\cr
&\le & C(\varepsilon,\ell,s) \mu^s\,R^n+C(\varepsilon,L,\ell,s)\int_{B_R}|Du|^s\chi_{_{\B_r\setminus \B_\rho}}+\varepsilon\int_{B_R}|Dv-Dw|^s,
\end{eqnarray}
where we also used assumptions ($\fA2$) and  ($\fA3$). Using Young's inequality again and the properties of $\eta$, we have that
\begin{eqnarray}\label{I4}
I_4 &\le & \int_{B_R}\eta^{s}|G|^{s-1}|Dv-Dw|+\int_{B_R}|D(\eta^{s})||G|^{s-1}|v-w|\cr\cr
&\le& \varepsilon \int_{B_R}|Dv-Dw|^s+ c(\varepsilon)\left(\frac{R^s}{(r-\rho)^s}+1\right)\int_{B_R}|G|^s+\varepsilon\int_{B_R}\frac{|v-w|^s}{R^s}\cr\cr
&\le& \varepsilon \int_{B_R}|Dv-Dw|^s+ c(\varepsilon)\left(\frac{d^sk^s_0}{\delta^s(r-\rho)^s}+1\right)\int_{B_R}|G|^s+C(n,s)\varepsilon \int_{B_R}|Dv-Dw|^s,
\end{eqnarray}
where, in the last estimate, we  used Poincar\'e - Wirtinger inequality and the bound $R<\frac{k_0d}{\delta}$.
Finally, by virtue of Young and Poincar\'e - Wirtinger inequalities and again the properties of $\eta$, we estimate
\begin{eqnarray}\label{I5}
I_5 &\le &\ell \int_{B_R}|D(\eta^{s})|(\mu+|Du|)^{s-2}(\mu+|Du|)|v-w|\cr\cr
&\le& \frac{c(n,\ell)}{(r-\rho)}\int_{B_R}\eta^{\frac1{s-1}}(\mu+\eta^{s'}|Du|)^{s-2}(\mu+|Du|)|v-w|\cr\cr
&=&  \frac{c(n,\ell)}{(r-\rho)}\int_{B_R}\eta^{\frac1{s-1}}(\mu+|Dw-D(\eta^{s'})u|)^{s-2}(\mu+|Du|)|v-w|\cr\cr
&\le&  \frac{c(n,\ell,s)}{(r-\rho)}\int_{B_R}|Dw|^{s-2}(\mu+|Du|)|v-w|\cr\cr
&+& \frac{c(n,\ell,s)}{(r-\rho)}\int_{B_R}(\mu+|D(\eta^{s'})u|)^{s-2}(\mu+|Du|)|v-w|\cr\cr
&\le &  \varepsilon \int_{B_R}|Dv-Dw|^s+\sigma\int_{B_R}|Dw|^s+C(n,\varepsilon,\sigma,\ell,s)\frac{R^s}{(r-\rho)^s}\int_{B_R}|Du|^{s} \cr\cr
&&\quad +\frac{C(n,\varepsilon,\sigma,\ell,s)}{(r-\rho)^s}\int_{B_R}|u|^{s}+C\frac{\mu^s\,R^{n+s}}{(r-\rho)^s}.
\end{eqnarray}
Combining estimates \eqref{I0}, \eqref{I1}, \eqref{I2}, \eqref{I3},   \eqref{I4} and \eqref{I5} we conclude that
\begin{eqnarray*}
\nu\int_{B_R}|Dv-Dw|^s &\le &	\varepsilon(5+C(n,s))\,\int_{B_R}|Dv-Dw|^s+2\sigma\int_{B_R}|Dw|^s+ \frac{c}{(r-\rho)^s}\int_{B_R}|u|^{s}\cr\cr
&+&c \left(\int_{B_R} V(x,B)\right)^\frac{t-s}{t}\,\left(\int_{B_R}(\mu+|Dw|^2)^\frac{t}{2}\right)^\frac{s}{t}\cr\cr
&+&c\frac{R^s}{(r-\rho)^s}\int_{B_R}|Du|^{s}+c\frac{d^sk^s_0}{\delta^s(r-\rho)^s}\int_{B_R}|G|^s\cr\cr
&+&  c\int_{B_R}|Du|^s\chi_{_{\B_r\setminus \B_\rho}} +c\,\mu^s\,R^n+C\frac{\mu^s\,R^{n+s}}{(r-\rho)^s},\end{eqnarray*}
where $c=c(\varepsilon,\sigma,s,n,\ell,L)$.
Choosing $\varepsilon=\frac{\nu}{2(5+C(n,s))}$, we can reabsorb the first integral in the right hand side of previous estimate by the left hand side thus obtaining
\begin{eqnarray}\label{finest}
\frac{\nu}{2}\int_{B_R}|Dv-Dw|^s &\le &2\sigma\int_{B_R}|Dw|^s+ \frac{c}{(r-\rho)^s}\int_{B_R}|u|^{s}\cr\cr
&+&c \left(\int_{B_R} V(x,B)\right)^\frac{t-s}{t}\,\left(\int_{B_R}(\mu+|Dw|^2)^\frac{t}{2}\right)^\frac{s}{t}\cr\cr
&+& c\frac{R^s}{(r-\rho)^s}\int_{B_R}|Du|^{s}+c\left(\frac{d^sk^s_0}{\delta^s(r-\rho)^s}+1\right)\int_{B_R}|G|^s\cr\cr
&+&  c\int_{B_R}|Du|^s\chi_{_{\B_r\setminus \B_\rho}}+ c\,\mu^s\,R^n+C\frac{\mu^s\,R^{n+s}}{(r-\rho)^s},
\end{eqnarray}
where $c=c(\nu,\sigma,s,n,\ell,L)$.
Consider the ball $B_{\delta R}=B(y,\delta R)$, and observe that
$$
\fint_{B_{\delta R}}|Dw-(Dw)_{B_{\delta R}}|^s\leq C(s)\fint_{B_{\delta R}}|Dv-(Dv)_{B_{\delta R}}|^s + C(s)\,\delta^{-n}\fint_{B_R}|Dw-Dv|^s.
$$
Now we can estimate the two terms on the right hand side with the help of estimate \eqref{finest} and Lemma \ref{autonomousbvp} as follows
\begin{eqnarray*}
\fint_{B_{\delta R}}|Dw-(Dw)_{B_{\delta R}}|^s
 &\le &(C(s)\delta^{\alpha s}+2\sigma \delta^{-n})\fint_{B_R}|Dw|^s+ \frac{c\delta^{-n}}{(r-\rho)^s}\fint_{B_R}|u|^{s}\cr\cr
&+&c \delta^{-n}\left(\fint_{B_R} V(x,B)\right)^\frac{t-s}{t}\,\left(\fint_{B_R}(\mu+|Dw|^2)^\frac{t}{2}\right)^\frac{s}{t}\cr\cr
&+& c \delta^{-n}\frac{R^s}{(r-\rho)^s}\fint_{B_R}|Du|^{s}+c\delta^{-n}\left(\frac{d^sk^s_0\delta^{-s}}{(r-\rho)^s}+1\right)\fint_{B_R}|G|^s\cr\cr
&+&  c\delta^{-n}\fint_{B_R}|Du|^s\chi_{_{\B_r\setminus \B_\rho}} +c\,\mu^{s}\,\delta^{-n}+ \mu^s\,\delta^{-n-s}\frac{d^sk^s_0}{(r-\rho)^s}.
\end{eqnarray*}
By the classical theory, since $2B_R\subset\Omega$ and $u$ is a local solution, we have that
$$\fint_{B_R}|Du|^s\le \frac{C}{R^s}\fint_{B_{2R}}|u|^s+ C\fint_{B_{2R}}|G|^s$$
and therefore, from $B_{2R}\subset \tilde\B_0$ we conclude that
\begin{eqnarray*}\fint_{B_{\delta R}}|Dw-(Dw)_{B_{\delta R}}|^s &\leq&c (\,\delta^{\alpha s}+\sigma \delta^{-n})\, \fint_{B_{R}}|Dw|^s \,
\cr\cr
&+&c\left(\frac{\,d^s\,k_0^s\,\delta^{-n-s}}{(r-\rho)^s}+\delta^{-n}\right)\fint_{B_{2R}}|G\,\chi_{\tilde\B_0}|^s+c\left(\frac{\,\delta^{-n-s}}{(r-\rho)^s}+\delta^{-n}\right)\fint_{B_{2R}}|u\,\chi_{\tilde\B_0}|^s
\cr\cr
&+&
c\,\delta^{-n}\,\left(\fint_{B_R} V(x,B_R)dx\right)^\frac{t-s}{t}\,\left(\fint_{B_R}(\mu+|Dw|^2)^\frac{t}{2}\right)^\frac{s}{t}\cr\cr
&+&  c\delta^{-n}\int_{B_R}|Du|^s\chi_{_{\B_r\setminus \B_\rho} }+c\,\mu^s\,\delta^{-n}+\mu^s\,\delta^{-n-s}\frac{d^sk^s_0}{(r-\rho)^s}\cr\cr
&\leq& c\,\delta^{\alpha s}\, \fint_{B_R}|Dw|^s +\left(\frac{c\,k_0^s\,d^s\delta^{-n-s}}{(r-\rho)^s}+\delta^{-n}\right)\fint_{B_{2R}}|G\,\chi_{\tilde\B_0}|^s+\left(\frac{c\,\delta^{-n-s}}{(r-\rho)^s}+\delta^{-n}\right)\fint_{B_{2R}}|u\,\chi_{\tilde\B_0}|^s
\cr\cr
&+&
c\,\delta^{-n}\,\left(\fint_{B_R} V(x,B_R)dx\right)^\frac{t-s}{t}\,\left(\fint_{B_R}(\mu+|Dw|^2)^\frac{t}{2}\right)^\frac{s}{t}\cr\cr
&+&  c\delta^{-n}\int_{B_R}|Du|^s\chi_{_{\B_r\setminus \B_\rho}} +c\,\mu^s\,\delta^{-n}+\mu^s\,\delta^{-n-s}\frac{d^sk^s_0}{(r-\rho)^s},
\end{eqnarray*}
where we chose  $\sigma=\delta^{\alpha s+n}$. Now, we take supremum over all possible values $R\in (0,k_0 d/\delta)$, and we get
$$\aligned
\M^\sharp_{s,k_0 d}(Dw)(y)^s &\leq c\,\delta^{\alpha s}\,\M_{s}(Dw)(y)^s+c\,\delta^{-n}
\M_{s}(|Du| \chi_{_{\B_r\setminus \B_\rho}})^s(y)\\
&+c\left(\frac{d^s\,k_0^s\,\delta^{-n-s}}{(r-\rho)^s}+\delta^{-n}\right)\,\M_{s}(G\,\chi_{\tilde\B_0})^s(y)+c\left(\frac{\,\delta^{-n-s}}{(r-\rho)^s}+\delta^{-n}\right)\M_{s}(u\,\chi_{\tilde\B_0})^s(y)\\
&+c\,\delta^{-n}\,\bigg(\M_{t}(Dw)(y)^s\bigg)\,\sup_{0<R<k_0 d/\delta}\left(\fint_{B_R} V(x,B_R)dx\right)^\frac{t-s}{t}\\
&+c\,\mu^s\,\delta^{-n}+\mu^s\,\delta^{-n-s}\frac{d^sk^s_0}{(r-\rho)^s}.
\endaligned$$
Now, we raise to the power $\frac{q}{s}$, and then integrate with respect to $y$ over $k_0\B_0$. We obtain
$$\aligned
\|&\M^\sharp_{s,k_0 d}(Dw)\|_{L^q(k_0\B_0)}^q
\leq C(s,q)\,\delta^{\alpha q}\,\|\M_{s}(Dw)\|_{L^q(\R^n)}^q+c\,\delta^{\frac{-nq}{s}}{\|\M_{s}(|Du|\chi_{_{\B_r\setminus \B_\rho}})\|^q_{L^q(\R^n)}}\\
&+c\,\left(\frac{k_0^q\,d^{q}\delta^{\frac{-nq}{s}-q}}{(r-\rho)^q}+\delta^{\frac{-nq}{s}}\right)\,\|\M_{s}(G\,\chi_{\tilde\B_0})\|_{L^q(\R^n)}^q+c\,\left(\frac{\delta^{\frac{-nq}{s}-q}}{(r-\rho)^q}+\delta^{\frac{-nq}{s}}\right)\|\M_{s}(u\,\chi_{\tilde\B_0})\|_{L^q(\R^n)}^q\\
&+c\,\delta^{\frac{-nq}{s}}\,\left(\|\M_{t }( Dw)\|_{L^q(\R^n)}^q\right)\,\sup_{y\in k_0\B_0}\sup_{0<R<k_0 d/\delta}\left(\fint_{B_R} V(x,B_R)dx\right)^\frac{t-s}{t}\\
&+c\,\mu^q\,|k_0\B_0|\left(\delta^{\frac{-nq}{s}}+\frac{\delta^{\frac{-nq}{s}-q}d^q}{(r-\rho)^s}\right),
\endaligned$$
where $c=c(n,s,q,\ell,L,\nu,\delta)$.
Now we use Lemma \ref{sharpmax} (i) and (ii), and obtain
\begin{equation}\label{penultima}
\aligned
\|Dw&\|_{L^q(\B_0)}^q
\leq C(n,s,q)\,\delta^{\alpha q}\,\| Dw\|_{L^q(\B_0)}^q +c\,\delta^{\frac{-nq}{s}}{\|Du\,\chi_{_{\B_r\setminus \B_\rho}}\|_{L^q(\R^n)}^q}\\
&+c\,\left(\frac{d^{q}\,k_0^q\,\delta^{\frac{-nq}{s}-q}}{(r-\rho)^q}+\delta^{\frac{-nq}{s}}\right)\,\|G\,\chi_{\tilde\B_0}\|_{L^q(\R^n)}^q+c\left(\frac{\delta^{\frac{-nq}{s}-q}}{(r-\rho)^q}+\delta^{\frac{-nq}{s}}\right)\|u\,\chi_{\tilde\B_0}\|_{L^q(\R^n)}^q\\
&+c\,\delta^{\frac{-nq}{s}}\,\left(\|Dw\|_{L^q(\B_0)}^q\right)\,\sup_{y\in k_0\B_0}\sup_{0<R<k_0 d/\delta}\left(\fint_{B_R} V(x,B_R)dx\right)^\frac{t-s}{t}\\
&+c\,\mu^q\,|k_0\B_0|\left(\delta^{\frac{-nq}{s}}+\frac{\delta^{\frac{-nq}{s}-q}d^q}{(r-\rho)^s}\right).
\endaligned
\end{equation}
{Our next aim} consists of inserting the two terms with $Dw$ on the right hand side into the term on the left hand side, by making their coefficients as small as possible. To do this, we first look at the term $C(n,s,q)\,\delta^{\alpha q}\,\| Dw\|_{L^q(\B_0)}^q$. To be absorbed on the left hand side, it suffices to choose $\delta$ such that
$$
C(n,s,q)\,\delta^{\alpha q}= \frac14\quad\Longleftrightarrow\quad \delta=\frac{1}{[4C(n,s,q)]^{\frac{1}{\alpha q}}}.
$$
Note that this choice of $\delta=\delta(n,s,q,\alpha)>0$ is  independent of $d$. Therefore, taking into account that $\delta$ has been fixed, estimate \eqref{penultima} becomes
\begin{equation}\label{penultimab}
\aligned
\|Dw&\|_{L^q(\B_0)}^q
\leq c\,{\|Du\,\chi_{_{\B_r\setminus \B_\rho}}\|_{L^q(\R^n)}^q}\\
&+c\,\left(\frac{k_0^q\,d^{q}}{(r-\rho)^q}+1\right)\,\|G\,\chi_{\tilde\B_0}\|_{L^q(\R^n)}^q+\left(\frac{c}{(r-\rho)^q}+1\right)\|u\,\chi_{\tilde\B_0}\|_{L^q(\R^n)}^q\\
&+\tilde c\,\,\left(\|Dw\|_{L^q(\B_0)}^q\right)\,\sup_{y\in k_0\B_0}\sup_{0<R<k_0 d/\delta}\left(\fint_{B_R} V(x,B_R)dx\right)^\frac{t-s}{t} \\
&+c\,\mu^q\,|k_0\B_0|\frac{d^q}{(r-\rho)^q},
\endaligned
\end{equation}
with constants $c$ and $\tilde c$ depending on $n,s,q,\ell,L,\nu$ but independent of $d$. Now, if $k_0d<\frac{d(x_0,\partial\Omega)}{2}$ then
$$
\sup_{y\in k_0\B_0}\sup_{0<R<k_0 d/\delta}\left(\fint_{B_R} V(x,B_R)dx\right)^\frac{t-s}{t}
\leq
\sup_{y\in B(x_0, \frac{d(x_0,\partial\Omega)}2)}\sup_{0<R<k_0 d/\delta}\left(\fint_{B_R} V(x,B_R)dx\right)^\frac{t-s}{t}
$$
and moreover, from \eqref{vmocond} we have that
$$
\lim_{d\to 0}\sup_{y\in B(x_0, \frac{d(x_0,\partial\Omega)}2)}\sup_{0<R<k_0 d/\delta}\left(\fint_{B_R} V(x,B_R)dx\right)^\frac{t-s}{t}=0.$$
In particular, $d>0$ can be chosen small enough so that
$$
\tilde c\,\,\sup_{y\in k_0\B_0}\sup_{0<R<k_0 d/\delta}\left(\fint_{B_R} V(x,B_R)dx\right)^\frac{t-s}{t}<\frac14.
$$
Note that the chosen value $d$ certainly depends on $d(x_0,\partial\Omega)$, $\fA$, $\nu$, $\ell$, $L$, $s$, $q$ and $t$. Nevertheless, this allows us to insert the remaining term with $Dw$ into the left hand side. One then gets immediately from \eqref{penultima} and our choice of $w$ that
$$
\aligned
{\int_{\B_\rho}|Du|^q}&\leq
2^n\,\int_{\B_r}|Dw|^q
\leq c\int_{\B_r\setminus \B_\rho }|Du|^q+c\left(\frac{d^q}{(r-\rho)^q}+1\right)\,\int_{\tilde\B_0}|G|^q\\
&+c\left(\frac{1}{(r-\rho)^q}+1\right)\int_{\tilde\B_0}|u|^q+c\,\mu^q\,|\tilde\B_0|\left(\frac{d^q}{(r-\rho)^q}+1\right).
\endaligned
$$
Filling the hole, i.e. adding to both sides of previous inequality the quantity
$$c\int_{\B_\rho}|Du|^q $$ 
we obtain
$$
\aligned
{\int_{\B_\rho}|Du|^q}&\leq \vartheta{\int_{\B_r }|Du|^q}+c\left(\frac{d^q}{(r-\rho)^q}+1\right)\,\int_{\tilde\B_0}|G|^q\\
&+c\left(\frac{1}{(r-\rho)^q}+1\right)\int_{\tilde\B_0}|u|^q+c\,\mu^q\,|\tilde\B_0|\left(\frac{d^q}{(r-\rho)^q}+1\right),
\endaligned$$
with $0<\vartheta<1$. Since the above estimate is valid for arbitrary radii $\frac{d}2<\rho<r<d$, by virtue of Lemma \ref{holf}, we conclude that
$$
\fint_{\frac12\B_0}|Du|^q
\leq C(n,\nu,\ell,L,s,q)\,\left(\mu^q+\fint_{\tilde\B_0}|G|^q+\frac{1}{d^q}\fint_{\tilde\B_0}|u|^q\right).$$
Since $\tilde\B_0=(1+\frac2\delta)k_0\B_0$, the claim follows by simply choosing $\lambda=2(1+\frac2\delta)k_0$.
\end{proof}

\noindent
We are now in a position to give the 
\begin{proof}[Proof of Theorem \ref{mainvmo}]

{ Fix a ball $B_R(x_0)\Subset\Omega$ with $0<R<\lambda d_0$ where $\lambda$ and $d_0$ are the ones determined in Theorem \ref{vmolocal}. Moreover fix a smooth kernel $\phi \in C^{\infty}_{c}(B_{1}(0))$ with $\phi \geq 0$ and $\int_{B_{1}(0)}
\! \phi = 1$, let us consider the corresponding family of mollifiers $( \phi_{\varepsilon})_{\epsilon >0}$
and put
 \begin{equation}\label{funcapp}
\fA_\varepsilon(x,\xi):= \fA(\cdot,\xi)\ast \phi_\varepsilon(x)=\int_{B_1}\phi(\omega)\fA(x{\color{blue}-}\varepsilon\omega,\xi)\,d\omega
\end{equation}
and
\begin{equation}
G_\varepsilon=G\ast \phi_\varepsilon	
\end{equation}
 each positive
$\varepsilon< \mathrm{dist}(B_R,\partial \Omega)$.}
One can easily check that the assumptions $(\fA1),(\fA2),(\fA3)$ imply
\begin{enumerate}
\item[{\rm (H1)}]\qquad
$\langle \fA_\varepsilon(x,\xi)-\fA_\varepsilon(x,\eta),\xi-\eta\rangle \ge    \nu (\mu^2+|\xi|^2+|\eta|^2)^{\frac{s-2}{2}}|\eta-\xi|^2$
\item[{\rm (H2)}]
\qquad $|\fA_\varepsilon(x,\xi)-\fA_\varepsilon(x,\eta)|\le L |\xi-\eta |(\mu^2+|\xi|^2+|\eta|^2)^{\frac{s-2}{2}}$
\item[{\rm (H3)}] \qquad$|\fA_\varepsilon(x,\xi)|\le \ell (\mu^2+|\xi|^2)^{\frac{s-1}{2}} $
\end{enumerate}
for almost every $x\in\Omega$ and for all $\xi,\eta \in \mathbb{R}^{n}$. Moreover, setting
$$V_\varepsilon(x,B_R)=\sup_{\xi\neq 0}\frac{\left|\fA_\varepsilon(x,\xi)-\fA_{\varepsilon,B_R}(\xi)\right|}{(\mu^2+|\xi|^2)^\frac{s-1}2}\quad \mathrm{with}\quad  \fA_{\varepsilon,B_R}(\xi)=\fint_{B_R}\fA_\varepsilon(y,\xi)dy $$
since   $x\to \fA_\varepsilon(x,\xi)$ is ${\mathcal C}^\infty$ smooth,     we have that
$$
\lim_{r\to 0}\sup_{r(B)<r} \sup_{c(B)\in B_R}\fint_{B}V_\varepsilon(x,B)\,dx=0\,.\leqno{\rm (H4)}$$
{For further needs we record that, since $\fA_\varepsilon(x,Du)\in L^{\frac{s}{s-1}}(B_R)$, that
\begin{equation}\label{convdf1}
\fA_\varepsilon(x,Du)\to \fA(x,Du)\qquad \mathrm{strongly\,\,in}\,\,\in L^{\frac{s}{s-1}}(B_R)	
\end{equation}
and also that, since $G\in   L^q(B_R)$,
\begin{equation}\label{convdf2}
G_\varepsilon\to G\qquad \mathrm{strongly\,\,in}\,\,\in L^{q}_{\mathrm{loc}}(B_R).\end{equation}}
{Let $u\in W^{1,s}_{\mathrm{loc}}(\Omega)$ be a   solution of the equation \eqref{defeq}  and let  us denote by $\ue\in W^{1,s}(B_R)$  the unique  solution of the Dirichlet problem $$\begin{cases}
\div \fA_\varepsilon(x,D\ue)=	\div (|G_\varepsilon|^{s-1}G_\varepsilon)\qquad \mathrm{in}\,\, B_R\cr
\ue=u\qquad\qquad\qquad\qquad\qquad\qquad\quad	\mathrm{on}\,\, \partial B_R
\end{cases}\leqno{(P_\varepsilon)}$$
By the classical theory, since $x\to \fA_\varepsilon(x,\xi)$ is $C^\infty$ smooth, we have that $Du_\varepsilon\in L^q$, for every $q\ge s$.}

\noindent {Using $\varphi=\ue-u$ as test function in the equation $(P_\varepsilon)$ and in the equation \eqref{defeq}, we have
\begin{eqnarray*}\label{ugua}
&&\int_{B_R}\Big\langle \fA_\varepsilon(x,D\ue), Du-D\ue\Big\rangle dx=\int_{B_R}\Big|G_\varepsilon|^{s-1}\langle G_\varepsilon, Du-D\ue\Big\rangle dx\\
&&\int_{B_R}\Big\langle \fA(x,Du), Du-D\ue\Big\rangle dx=\int_{B_R}|G|^{s-1}\Big\langle G, Du-D\ue\Big\rangle dx	
\end{eqnarray*}}
{Subtracting the second equation from the first one, we obtain
\begin{equation}\label{ugua}
\int_{B_R}\Big\langle \fA_\varepsilon(x,D\ue)-\fA(x,Du), Du-D\ue\Big\rangle dx=\int_{B_R}\Big\langle|G_\varepsilon|^{s-1} G_\varepsilon-|G|^{s-1} G, Du-D\ue\Big\rangle dx	 \end{equation}}
{Inequality (H1) yields
\begin{eqnarray}\label{convforte}
&& \nu\int_{B_R}(\mu^2+|Du|^2+|D\ue|^2)^{\frac{s-2}{2}}|Du-D\ue|^2\,dx\cr\cr
&\le & 	\int_{B_R}\Big\langle \fA_\varepsilon(x,D\ue)-\fA_\varepsilon(x,Du), Du-D\ue\Big\rangle\,dx\cr\cr
&= & 	\int_{B_R}\Big\langle \fA(x,Du)-\fA_\varepsilon(x,Du), Du-D\ue\Big\rangle\,dx\cr\cr
&+& \int_{B_R}\Big\langle|G_\varepsilon|^{s-1} G_\varepsilon-|G|^{s-1} G, Du-D\ue\Big\rangle dx\cr\cr
&\le &\left(\int_{B_R}|\fA(x,Du)-\fA_\varepsilon(x,Du)|^{\frac{s}{s-1}}dx\right)^{\frac{s-1}{s}}\left(\int_{B_R}|Du-D\ue|^s\,dx\right)^{\frac{1}{s}}\cr\cr
&+&\left(\int_{B_R}| G_\varepsilon-G|^{s}dx\right)^{\frac{s-1}{s}}\left(\int_{B_R}|Du-D\ue|^s\,dx\right)^{\frac{1}{s}},
\end{eqnarray}
where we used the equality \eqref{ugua} and H\"older's inequality.}
{Since $s\ge 2$, by well known means, from estimate \eqref{convforte} we deduce
$$ \int_{B_R}|Du-D\ue|^s\,dx\le c\int_{B_R}| \fA(x,Du)-\fA_\varepsilon(x,Du)|^{\frac{s}{s-1}}\,dx+\int_{B_R}| G_\varepsilon-G|^{s}dx.$$
Taking the limit as $\varepsilon\to 0$ in previous inequality and recalling \eqref{convdf1} and \eqref{convdf2}, we deduce that $\ue$ converges strongly to $u$ in $W^{1,s}$. Since the operator $A_\varepsilon$ satisfies estimates (H1)--(H4) and $Du_\varepsilon\in L^q$ for every $q\ge s$, we are legitimate to apply  the a priori estimate of Theorem \ref{vmolocal} to each $u_\varepsilon$ thus getting
\begin{equation}\label{caccioeps}
\int _{B_\rho}\,|Du_\varepsilon|^q \leq C\,\left(\mu^q+\int_{B_{\lambda\rho}}|u_\varepsilon|^q\,+\int_{B_{\lambda\rho}} |G_\varepsilon|^q\,\right)	
\end{equation}
for every $q>s$ and for every positive $\rho$ such that $B_{\lambda\rho}\subset B_R$.}
{ Let us  define the  decreasing sequence of exponents
$$ \begin{cases}
q_0=q\\
q_j=\frac{nq_{j-1}}{n+q_{j-1}}\qquad j\in \mathbb{N}	
\end{cases}$$
Note that, since $q_j\searrow 0$, there exists $h\in \mathbb{N}$ such that $q_h\le s^*$. Chose now $\rho=\rho_h$ so small to have $\lambda^h\rho<R$ and let $r_i=\lambda^i\rho$.}
Since $G\in L^q(B_R)$ we have $G\in L^{q_i}(B_R)$ for every $i\in \mathbb{N}$ and so we can write inequality \eqref{caccioeps} as follows
\begin{eqnarray}\label{ite}
\int_{B_{r_i}}\,|Du_\varepsilon|^{q_i} &\leq & \frac{C_{q_i}}{r_{i+1}^{q_i}}\,\int_{B_{r_{i+1}}}|u_\varepsilon|^{q_i}+C_{q_i}\int_{B_{r_{i+1}}} |G_\varepsilon|^{q_i}+C_{q_i}\,\mu^{q_i}\,|B_{r_{i+1}}|\cr\cr
	&\leq & \frac{C_{q_i}}{r_{i+1}^{q_i}}\,\left(\int_{B_{r_{i+1}}}|u_\varepsilon|^{q_{i+1}}+|Du_\varepsilon|^{q_{i+1}}\right)^{\frac{q_i}{q_{i+1}}}+C_{q_i}\int_{B_{r_{i+1}}} |G_\varepsilon|^{q_i}+C_{q_i}\,\mu^{q_{i}}\,|B_{r_{i+1}}|\cr\cr
	&\leq & \frac{C_{q_i}}{r_{i+1}^{q_i}}
	\left[\int_{B_{r_{i+1}}}|u_\varepsilon|^{q_{i+1}}+\frac{C_{q_{i+1}}}{r_{i+2}^{q_{i+1}}}\int_{B_{r_{i+2}}}|u_\varepsilon|^{q_{i+1}}+C_{q_{i+1}}\int_{B_{r_{i+2}}}|G_\varepsilon|^{q_{i+1}}+C_{q_{i+1}}\,\mu^{q_{i+1}}\,|B_{r_{i+2}}|\right]^{\frac{q_{i}}{q_{i+1}}}\cr\cr
	&+&C_{q_i}\int_{B_{r_{i+1}}} |G_\varepsilon|^{q_i}+C_{q_i}\,\mu^{q_i}\,|B_{r_{i+1}}|\cr\cr
	&\leq & \frac{C_{q_i}C_{q_{i+1}}}{(r_{i+1}r_{i+2})^{q_i}}\left(\int_{B_{r_{i+2}}}|u_\varepsilon|^{q_{i+1}}\right)^{\frac{q_{i}}{q_{i+1}}}+\frac{C_{q_i}C_{q_{i+1}}}{(r_{i+1})^{q_i}}\left(\int_{B_{r_{i+2}}}|G_\varepsilon|^{q_{i+1}}\right)^{\frac{q_{i}}{q_{i+1}}}+C_{q_i}\int_{B_{r_{i+2}}} |G_\varepsilon|^{q_i}\cr\cr
	&+&C_{q_i}\,\mu^{q_i}\,|B_{r_{i+1}}|+C_{q_{i+1}}C_{q_i}\,\mu^{q_i}\,|B_{r_{i+1}}||B_{r_{i+2}}|^{\frac{q_{i}}{q_{i+1}}}
\end{eqnarray}
where we used first Sobolev inequality and again  inequality at \eqref{caccioeps} and finally Young's inequality.
Iterating estimate \eqref{ite}, from $i=0$ to $i=h-1$, we deduce that
$$\int_{B_{\rho}}\,|Du_\varepsilon|^{q} \leq  \tilde C_{h}\,\left(\int_{B_{\lambda^h \rho}}|u_\varepsilon|^{q_{h}}\right)^{\frac{q}{q_{h}}}
+\tilde C_h \int_{B_{R}} |G_\varepsilon|^{q}+	\bar C_{h}\,\mu^q,$$
where $\tilde C_h=\displaystyle{\Pi_{i=0}^{h-1} \frac{C_{q_i}}{r_{i+1}^{q_i}}}$. Since $q_h\le s^*$, by virtue of the strong convergence of $u_\varepsilon$ to $u$ in $W^{1,s}$, we can pass to limit as $\varepsilon\to 0$ in previous estimate to deduce
 $$\int_{B_{\rho}}\,|Du|^{q} \leq  \tilde C_{h}\,\left(\int_{B_{R}}|u|^{q_{h}}\right)^{\frac{q}{q_{h}}}
+\tilde C_h\int_{B_{R}} |G|^{q}+\tilde C_{h}\,\mu^q,$$
i.e. the conclusion.
\end{proof}

\noindent

 
\section{Proof of Theorem \ref{maintriebel}}\label{sectiontriebel}

\noindent
We first prove that if \eqref{hajlasz2} is satisfied then $\cA$ has the locally uniform $VMO$ property \eqref{vmocond}.  

\begin{lem}\label{triebelimpliesvmo}
Let $\cA$ be such that $(\cA1), (\cA2) ,(\cA3)$ hold. Assume that \eqref{hajlasz2} is satisfied. Then $\cA$ is locally uniformly in $VMO$, that is, \eqref{vmocond} holds with $s=2$.
\end{lem}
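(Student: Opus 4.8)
The plan is to obtain a pointwise bound for $V(x,B)$ straight from \eqref{hajlasz2}, average it, and then exploit the critical scaling of the exponent $\frac n\alpha$. Fix a compact set $K\subset\Omega$ and pick $R_0>0$ so small that $K_{R_0}:=\{x:\operatorname{dist}(x,K)\le R_0\}$ is a compact subset of $\Omega$; then every ball $B$ with center $c(B)\in K$ and radius $r(B)<R_0$ satisfies $B\subset K_{R_0}$. Note first that $(\cA3)$ makes $\cA(\cdot,\xi)$ locally bounded, so $\cA_B(\xi)=\fint_B\cA(y,\xi)\,dy$ is well defined. For such a ball and a.e. $x\in B$, write $\cA(x,\xi)-\cA_B(\xi)=\fint_B\big(\cA(x,\xi)-\cA(y,\xi)\big)\,dy$ and apply \eqref{hajlasz2} together with the fact that $|x-y|<2r(B)$ for $x,y\in B$; since for $s=2$ the weight in \eqref{maxosc} is exactly $(\mu^2+|\xi|^2)^{1/2}$, taking the supremum over $\xi\neq0$ gives
$$V(x,B)\le (2r(B))^{\alpha}\big(g(x)+g_B\big),\qquad g_B:=\fint_B g.$$

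Next I would average this inequality over $B$, which yields $\fint_B V(x,B)\,dx\le 2\,(2r(B))^{\alpha}\,g_B$. Then Hölder's inequality with exponents $\frac n\alpha$ and $\big(\frac n\alpha\big)'$ gives $g_B\le |B|^{-\alpha/n}\|g\|_{L^{n/\alpha}(B)}$, and since $|B|=c(n)\,r(B)^n$ the radius factors cancel exactly, leaving
$$\fint_B V(x,B)\,dx\le C(n,\alpha)\,\|g\|_{L^{n/\alpha}(B)},$$
with $C(n,\alpha)$ independent of $B$. This scale invariance is the whole point: $\frac n\alpha$ is precisely the integrability threshold at which \eqref{hajlasz2} produces a BMO‑type bound (and, as we are about to see, a VMO one) rather than an honest power‑decay in $r(B)$.

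Finally I would let $R\to0$ and invoke the \emph{absolute continuity of the Lebesgue integral} of $g^{n/\alpha}\in L^1(K_{R_0})$: given $\varepsilon>0$ there is $\delta>0$ with $\int_E g^{n/\alpha}<\varepsilon$ whenever $E\subset K_{R_0}$ and $|E|<\delta$. Choosing $R<R_0$ so that $c(n)R^n<\delta$, every admissible ball $B$ (i.e. $c(B)\in K$, $r(B)<R$) lies in $K_{R_0}$ and has $|B|<\delta$, hence $\|g\|_{L^{n/\alpha}(B)}<\varepsilon^{\alpha/n}$ and therefore $\fint_B V(x,B)\,dx\le C(n,\alpha)\,\varepsilon^{\alpha/n}$. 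Taking the supremum over all such $B$ and sending $\varepsilon\to0$ establishes \eqref{vmocond} with $s=2$. I expect the only step that is not pure bookkeeping to be this last one: one must recognize that, because the radius has been scaled out, the decay to zero can only come from the equi‑smallness of $\|g\|_{L^{n/\alpha}(B)}$ over balls shrinking inside a fixed compact set, i.e. from absolute continuity of the integral — everything else is a direct computation.
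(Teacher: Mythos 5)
Your proposal is correct and follows essentially the same route as the paper: both bound $V(x,B)$ by averaging $|\cA(x,\xi)-\cA(y,\xi)|$ over $y\in B$ via \eqref{hajlasz2}, apply H\"older with the critical exponent $\frac{n}{\alpha}$ so that the radius factors cancel, and conclude from the uniform smallness of $\|g\|_{L^{n/\alpha}(B)}$ over shrinking balls. The only (harmless) differences are that you pull out $|x-y|^\alpha\le(2r(B))^\alpha$ pointwise before H\"older instead of keeping it inside the double average, and that you spell out the absolute-continuity step that the paper leaves implicit.
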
 
\begin{proof}
We have
$$\aligned
\fint_B V(x,B)\,dx
&=\fint_B \sup_{\xi\neq 0}\frac{|\cA(x,\xi)-\cA_B(\xi)|}{(\mu^2+|\xi|^2)^\frac{1}{2}}\,dx\\
&\leq\fint_B \sup_{\xi\neq 0}\fint_B\frac{|\cA(x,\xi)-\cA (y,\xi)|}{(\mu^2+|\xi|^2)^\frac{1}{2}}\,dy\,dx\\
&\leq\fint_B \sup_{\xi\neq 0}\fint_B (g(x)+g(y))\,|x-y|^\alpha\,dy\,dx\\
&=\fint_B \fint_B (g(x)+g(y))\,|x-y|^\alpha\,dy\,dx\\
&\leq\left(\fint_B \fint_B (g(x)+g(y))^\frac{n}\alpha\,dy\,dx\right)^\frac{\alpha}{n}\,\left(\fint_B\fint_B|x-y|^\frac{n\alpha}{n-\alpha}\,dy\,dx\right)^\frac{n-\alpha}{n}\\
&\leq\left(\frac{1}{|B|}\,\int_{B}g^\frac{n}\alpha\right)^\frac{\alpha}{n}\,C(\alpha,n)\,|B|^\frac{\alpha}{n}=C(n,\alpha)\,\int_B g^\frac{n}{\alpha}\\
\endaligned$$
and thus \eqref{vmocond} holds. 
\end{proof}

\begin{proof}[Proof of Theorem \ref{maintriebel}]
Given a test function $\varphi\in\mathcal{C}^\infty_c(\Omega)$ such that $\supp\tau_{-h}\varphi\subset\Omega$, we test the equation
$$\div\cA(x,Du)=0$$
with $\varphi$ and $\tau_{-h}\varphi$, and combine the resulting identities. We have
$$
\int\langle \cA(x+h, Du(x+h))-\cA(x+h, Du),\nabla\varphi\rangle =-\int\langle\cA(x+h, Du(x))-\cA(x,Du(x)),\nabla\varphi\rangle.
$$
Now, by setting 
$$\cA_h(x,\xi)=\frac1{|h|^\alpha}\left(\cA(x+h, |h|^\alpha\,\xi+Du(x))-\cA(x+h, Du)\right)$$
and $v_h=\frac{\Delta_h u}{|h|^\alpha}$, we immediately see that $v_h$ is a weak solution of 
\begin{equation}\label{eqh0}
\div \cA_h(x,Dv_h)= \div G_h
\end{equation}
where
\begin{equation}\label{Gh2}
G_h(x)=- \frac1{|h|^\alpha}\,\left(\cA(x+h,Du(x))-\cA(x, Du(x))\right).
\end{equation}
It is immediate to check that the new $\cA_h$ still satisfies $(\cA1), (\cA2)$ with the same constants of $\cA$. Moreover, $(\cA3)$ is also satisfied by $\cA_h$ but now with $\mu=0$. We also note that 
$$
|G_h(x)|=\left|\frac{\cA(x+h,Du(x))-\cA(x, Du(x))}{|h|^\alpha} \right|\leq (g(x+h)+g(x))\,(\mu^2+|Du(x)|^2)^\frac{1}{2},
$$
Now, we know from Lemma \ref{triebelimpliesvmo} that $\cA$ is locally uniformly in $VMO$, and so Theorem \ref{mainvmo} ensures that $Du\in L^r_{loc}$ for each finite $r>2$. In particular, if $2\leq p<\frac{n}\alpha$ then $Du\in L^{p^\ast_\alpha}_{loc}$ and as a consequence $G_h\in L^p_{loc}$. It then follows that Lemma \ref{nonstandardcaccioppoli} can be applied to \eqref{eqh0} with $\mu=0$ and so there exists $p_0=p_0(n,\nu,\ell)>2$ such that if one further has $2\leq p<p_0$ then
\begin{equation}\label{apriorivh}
\|Dv_h\|_{L^p(B)}\leq C_0\left(\frac{1}{r_B}\|v_h\|_{L^p(2B)}+\|G_h\|_{L^p(2B)}\right)
\end{equation}
for each ball $B$ with radius $r_B$ such that $2B\subset\Omega$. In terms of $u$, this reads as
$$\aligned
\left\|\frac{\Delta_h(Du)}{|h|^\alpha}\right\|_{L^p(B)}
&\leq C_0\left(\frac{1}{r_B}\left\|\frac{\Delta_h u}{|h|^\alpha}\right\|_{L^p(2B)}+\left\|G_h\right\|_{L^p(2B)}\right)\\
&\leq C_0\left(\frac{1}{r_B}\left\|\frac{\Delta_h u}{|h|^\alpha}\right\|_{L^p(2B)}+\left\|g\right\|_{L^\frac{n}\alpha(2B)}\,\|(1+|Du|^2)^\frac12\|_{L^\frac{np}{n-\alpha p}(2B)}\right)
\endaligned$$
and so taking supremum for $|h|<\delta$, $\delta>0$ small enough, 
$$\aligned
\sup_{h}\left\|\frac{\Delta_h(Du)}{|h|^\alpha}\right\|_{L^p(B)}
&\leq C_0\left(\frac{1}{r_B}\sup_h\left\|\frac{\Delta_h u}{|h|^\alpha}\right\|_{L^p(2B)}+\left\|g\right\|_{L^\frac{n}\alpha(2B)}\,\|(1+|Du|^2)^\frac12\|_{L^\frac{np}{n-\alpha p}(2B)}\right)
\endaligned$$
We now use Lemma \ref{localbesov2} to see that the term $\sup_h\left\|\frac{\Delta_h u}{|h|^\alpha}\right\|_{L^p(2B)}$ is finite, since  $u\in W^{1, p}_{loc}$. We then obtain that $Du\in B^\alpha_{p,\infty,loc}$, as claimed. When $\cA$ is linear in the gradient variable, that is $\cA(x,\xi)=A(x)\xi$, one immediately sees that $x\mapsto\cA_h(x,\xi)$ is locally uniformly in $VMO$, and therefore the restriction $p<p_0$ at \eqref{apriorivh} is not needed. 
\end{proof}

\section{Proof of Theorems \ref{mainBesovhomog}, \ref{mainBesov} and \ref{mainBesovlinear}  }\label{fract}

\noindent
We first prove that if $\cA$ satisfies $(\cA1), (\cA2) ,(\cA3), (\cA4)$ then it is locally uniformly in $VMO$. When $\cA$ is linear in the second variable, this comes from Lemma \ref{embedding}. 

\begin{lem}\label{besovimpliesvmo}
Let $\cA$ be such that $(\cA1), (\cA2) ,(\cA3), (\cA4)$ hold. Then $\cA$ is locally uniformly in $VMO$ , that is, \eqref{vmocond} holds with $s=2$.\end{lem}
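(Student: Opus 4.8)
The plan is to estimate the mean oscillation $\fint_B V(x,B)\,dx$ by hand and to show that it tends to $0$ as the radius $r_B$ of $B$ tends to $0$, uniformly over balls $B$ with centre in a prescribed compact set $K\subset\Omega$; this is precisely condition \eqref{vmocond} with $s=2$. First I would record the two elementary consequences of $(\cA3)$ and $(\cA4)$. Since $\cA_B(\xi)=\fint_B\cA(y,\xi)\,dy$ still satisfies $(\cA3)$, one has $V(x,B)\le 2\ell$, so $V(\cdot,B)$ is bounded and its average is meaningful. On the other hand, writing $\cA(x,\xi)-\cA_B(\xi)=\fint_B(\cA(x,\xi)-\cA(y,\xi))\,dy$ and invoking $(\cA4)$ — for $x,y\in B$ with $x\ne y$ the vector $x-y$ lies in exactly one dyadic shell $2^{-k}\le|x-y|<2^{-k+1}$ — together with the comparability of $(\mu^2+|\xi|^2)^{1/2}$ and $(1+|\xi|^2)^{1/2}$ (up to a factor $\mu^{-1}$; the borderline $\mu=0$ is treated apart, using $(\cA2)$ and $\cA(x,0)=0$), the supremum over $\xi$ is absorbed and one is left with the purely scalar bound
$$V(x,B)\le C\fint_B|x-y|^{\alpha}\,\bigl(g_{k(x,y)}(x)+g_{k(x,y)}(y)\bigr)\,dy ,$$
where $k(x,y)$ is the integer with $2^{-k}\le|x-y|<2^{-k+1}$. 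This mimics the opening of the proof of Lemma \ref{triebelimpliesvmo}; the new feature — and the reason $(\cA4)$ is used in full — is that $g_k$ depends on the dyadic scale.

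Next I would integrate in $x$ over $B$, so that $\fint_B V(x,B)\,dx\le C|B|^{-2}\int_B\int_B|x-y|^{\alpha}(g_{k(x,y)}(x)+g_{k(x,y)}(y))\,dy\,dx$, and split this double integral according to the dyadic size of $|x-y|$. If $r_B$ is small only the indices $k\ge k_B$ with $2^{-k_B}\sim r_B$ occur, so $k_B\to\infty$ as $r_B\to0$. For a fixed such $k$, using that $|x-y|^{\alpha}\lesssim 2^{-k\alpha}$ on the shell, that for each $x$ the shell meets $B$ in measure $\lesssim 2^{-kn}$, and then H\"older's inequality in $x$, one obtains
$$\frac{1}{|B|^{2}}\int_B\int_{B\cap\{2^{-k}\le|x-y|<2^{-k+1}\}}|x-y|^{\alpha}\,g_k(x)\,dy\,dx\le C\,\frac{2^{-k(n+\alpha)}}{|B|^{1+\alpha/n}}\,\|g_k\|_{L^\frac{n}\alpha(B)},$$
and the same for the term carrying $g_k(y)$. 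The essential bookkeeping point is that $2^{-k_B(n+\alpha)}/|B|^{1+\alpha/n}\sim 1$ at the first scale, while this prefactor decays geometrically, like $2^{-(k-k_B)(n+\alpha)}$, for $k>k_B$.

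Summing over $k\ge k_B$ I would apply H\"older's inequality in the index $k$ with exponents $q$ and $q'$: the geometric factors contribute $\bigl(\sum_{k\ge k_B}2^{-k(n+\alpha)q'}\bigr)^{1/q'}\sim 2^{-k_B(n+\alpha)}\sim|B|^{1+\alpha/n}$, which cancels the $|B|^{-(1+\alpha/n)}$, leaving
$$\fint_B V(x,B)\,dx\le C(n,\alpha,q,\mu,\ell)\,\Bigl(\sum_{k\ge k_B}\|g_k\|_{L^\frac{n}\alpha(\Omega)}^{q}\Bigr)^{1/q}.$$
Since $(g_k)_k\in\ell^q(L^\frac{n}\alpha)$, the right-hand side is a tail of a convergent series and therefore tends to $0$ as $k_B\to\infty$, i.e.\ as $r_B\to0$; the bound being independent of the centre, \eqref{vmocond} follows. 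The modifications for $q=\infty$ (sums replaced by suprema, arguing as in the single-function estimate of Lemma \ref{triebelimpliesvmo}) are routine.

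I expect the core difficulty to be exactly this balancing: one has to track the interplay between the shrinking ball $B$ and the dyadic scales $2^{-k}$ so that the factor $|B|^{-(1+\alpha/n)}$ produced by localization is precisely offset by the geometric decay in $k$, leaving a genuine remainder of the $\ell^q$ sum rather than its full value — so it is the summability of $(\|g_k\|_{L^\frac{n}\alpha})_k$, not merely its boundedness, that does the work. The mismatch between the $(1+|\xi|^2)^{1/2}$ normalization in $(\cA4)$ and the $(\mu^2+|\xi|^2)^{1/2}$ normalization in $V$, and the care this needs when $\mu$ is small, is a minor but real technical wrinkle, not a serious obstruction.
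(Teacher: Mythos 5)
Your proof is correct, and its opening move --- decomposing the double integral over $B\times B$ into the dyadic shells $2^{-k}\le|x-y|<2^{-k+1}$ so that the scale-dependent functions $g_k$ from $(\cA4)$ can be brought in --- is exactly the paper's. Where you genuinely diverge is in how smallness is extracted at the end. The paper applies H\"older with exponents $\tfrac{n}{n-\alpha}$ and $\tfrac{n}{\alpha}$ to the whole sum at once, then H\"older in $k$, and lands on the bound $\fint_B V(x,B)\,dx\le C\bigl(\sum_k\|g_k\|_{L^{n/\alpha}(B)}^q\bigr)^{1/q}$ with the \emph{full} sum over $k$; it must then prove separately that this quantity tends to $0$ uniformly for centres in $K$, which it does via a continuity-of-the-norm plus dominated-convergence argument (choosing a maximizing centre $x_r\in K$). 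You instead exploit that only the scales $k\ge k_B$ with $2^{-k_B}\sim r_B$ contribute, estimate shell by shell, and after H\"older in $k$ the geometric factor $2^{-k_B(n+\alpha)}$ cancels $|B|^{-(1+\alpha/n)}$, leaving the \emph{tail} $\bigl(\sum_{k\ge k_B}\|g_k\|_{L^{n/\alpha}(\Omega)}^q\bigr)^{1/q}$. That the tail of a convergent series vanishes is immediate and manifestly uniform in the centre, so you dispense with the paper's compactness step entirely; this is arguably the cleaner route, at the modest price of the bookkeeping between $r_B$ and the dyadic scales that you correctly identify as the crux. Two caveats apply equally to both arguments: for $q=\infty$ neither the tail of an $\ell^\infty$ sequence nor the paper's dominated-convergence step actually yields smallness (the paper's limit argument is written for finite $q$), and the mismatch between the $(1+|\xi|^2)^{1/2}$ normalization in $(\cA4)$ and the $(\mu^2+|\xi|^2)^{1/2}$ in the definition of $V$ is silently ignored in the paper, whereas you at least flag it and sketch a fix.
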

\begin{proof}
Given a point $x\in\Omega$, let us write $A_k(x)=\{y\in\Omega: 2^{-k}\leq|x-y|<2^{-k+1}\}$. We have
$$\aligned
\fint_B V(x,B)\,dx
&=\fint_B \sup_{\xi\neq 0}\frac{|\cA(x,\xi)-\cA_B(\xi)|}{(\mu^2+|\xi|^2)^\frac{1}{2}}\,dx\\
&\leq\fint_B \sup_{\xi\neq 0}\fint_B\frac{|\cA(x,\xi)-\cA (y,\xi)|}{(\mu^2+|\xi|^2)^\frac{1}{2}}\,dy\,dx\\
&=\fint_B \sup_{\xi\neq 0}\frac{1}{|B|}\sum_k\int_{B\cap A_k(x)}\frac{|\cA(x,\xi)-\cA (y,\xi)|}{(\mu^2+|\xi|^2)^\frac{1}{2}}\,dy\,dx\\
&\leq\frac{1}{|B|^2}\sum_k\int_B \int_{B\cap A_k(x)} |x-y|^\alpha\,(g_k(x)+g_k(y))\,dy\,dx\\
\endaligned$$
The last term above is bounded by
$$
\left(\frac{1}{|B|^2}\sum_k\int_B \int_{B\cap A_k(x)} |x-y|^\frac{n\alpha}{n-\alpha}dy\,dx\right)^\frac{n-\alpha}{n}\,\left(\frac{1}{|B|^2}\sum_k\int_B\int_{B\cap A_k(x)}(g_k(x)+g_k(y))^\frac{n}\alpha dy\,dx\right)^\frac\alpha{n} =I \cdot II
$$
The first sum is very easy to handle, since
$$
I=\left(\frac{1}{|B|^2}\sum_k\int_B \int_{B\cap A_k(x)} |x-y|^\frac{n\alpha}{n-\alpha}dy\,dx\right)^\frac{n-\alpha}{n} \leq C(n,\alpha)\,|B|^\frac\alpha{n}
$$
Concerning the second, we see that
$$\aligned
II &\leq \left(\frac{1}{|B|^2}\sum_k|B\cap A_k(x)|\int_B  g_k(x) ^\frac{n}\alpha dx\right)^\frac\alpha{n} \\
 &\leq 
\left(\frac{1}{|B|^2}\sum_k \left(\int_B  g_k(x) ^\frac{n}\alpha dx\right)^\frac{\alpha q}{n} \right)^{\frac\alpha{n}\frac{n}{\alpha q}}
\left(\frac{1}{|B|^2}\sum_k|B\cap A_k(x)|^\frac{\alpha q}{\alpha q - n} \right)^{\frac\alpha{n}\frac{\alpha q - n}{\alpha q}}\\
&=
\frac{1}{|B|^\frac2q}\,\left(\sum_k\| g_k\|_{ L^\frac{n}\alpha(B))} ^q\right)^\frac1q
\frac{1}{|B|^{2(\frac\alpha{n}-\frac1q)}}\left(\sum_k|B\cap A_k(x)|^\frac{\alpha q}{\alpha q - n} \right)^{\frac\alpha{n}\frac{\alpha q - n}{\alpha q}}\\
&\leq 
\frac{1}{|B|^\frac2q}\,\left(\sum_k\| g_k\|_{ L^\frac{n}\alpha(B))} ^q\right)^\frac1q
\frac{1}{|B|^{2(\frac\alpha{n}-\frac1q)}}\,C(n,\alpha, q)|B|^\frac{\alpha}{n}=C(n,\alpha,q)\,|B|^{-\frac{\alpha}{n}}\,\left(\sum_k\| g_k\|_{ L^\frac{n}\alpha(B))} ^q\right)^\frac1q
\endaligned$$
thus
$$
\fint_B V(x,B)\,dx\leq I\cdot II \leq C(n,\alpha,q)\,\left(\sum_k\| g_k\|_{ L^\frac{n}\alpha(B))} ^q\right)^\frac1q.
$$
In order to get the $VMO$ condition, it just remains to prove that
$$
\lim_{r\to 0} \sup_{x \in K}\left(\sum_k\| g_k\|_{ L^\frac{n}\alpha(B(x ,r)))} ^q\right)^\frac1q =0
$$
on every compact set $K\subset\Omega$. To do this, we fix $r>0$ small enough, and observe that the function $x\mapsto \|g_k\|_{\ell^q(L^\frac{n}\alpha(B(x,r))}$ is continuous on the set $\{x\in\Omega:d(x,\partial\Omega>r)\}$, as a uniformly converging series of continuous functions. As a consequence, there is a point $x_r\in K$ (at least for small enough $r>0$) such that
$$
\sup_{x\in K}\|g_k\|_{\ell^q(L^\frac{n}\alpha(B(x,r)))} =  \|g_k\|_{\ell^q(L^\frac{n}\alpha(B(x_r,r)))}.
$$
Now, from $\|g_k\|_{ L^\frac{n}\alpha(B(x,r))}\leq \|g_k\|_{L^\frac{n}\alpha(B(x_r,r))}$ and this belongs to $\ell^q$, we can use dominated convergence to say that
$$ 
\lim_{r\to 0}\|g_k\|_{\ell^q(L^\frac{n}\alpha(B(x_r,r)))} = \left(\sum_k\lim_{r\to 0}\left(\int_{B(x_r,r)}g_k^\frac{n}{\alpha}\right)^\frac{q\alpha}{n}\right)^\frac1q .$$
Each of the limits on the term on the right hand side are equal to $0$, since the points $x_r$ cannot escape from the compact set $K$ as $r\to 0$. This finishes the proof.
\end{proof}

\noindent
We  now prove Theorem \ref{mainBesov}.

\begin{proof}[Proof of Theorem \ref{mainBesov}]
Given a test function $\varphi\in\mathcal{C}^\infty_c(\Omega)$ such that $\supp\tau_{-h}\varphi\subset\Omega$, we test the equation with $\varphi$ and $\tau_{-h}\varphi$, and combine the resulting identities. We have
$$
\aligned
\int\langle \cA(x+h, Du(x+h))&-\cA(x+h, Du),\nabla\varphi\rangle =\\
 &\int\langle\Delta_hG,\nabla\varphi\rangle-\int\langle\cA(x+h, Du(x))-\cA(x,Du(x)),\nabla\varphi\rangle.\endaligned
$$
Now, by setting 
$$\cA_h(x,\xi)=\frac1{|h|^\alpha}\left(\cA(x+h, |h|^\alpha\,\xi+Du(x))-\cA(x+h, Du)\right)$$
and $v_h=\frac{\Delta_h u}{|h|^\alpha}$, we immediately see that $v_h$ is a weak solution of 
\begin{equation}\label{eqh}
\div \cA_h(x,Dv_h)= \div G_h
\end{equation}
where
\begin{equation}\label{Gh1}
G_h(x)=\frac1{|h|^\alpha}\,\Delta_h G(x)- \frac1{|h|^\alpha}\,\left(\cA(x+h,Du(x))-\cA(x, Du(x))\right)
\end{equation}
As before, $\cA_h$ still satisfies $(\cA1), (\cA2) ,(\cA3)$ with same constants $\nu$, $L$, $\ell$ but now $\mu=0$. We also note that, by virtue of ($\cA4$) and the assumption on $G$, we have $G_h\in L^p_{loc}$ for almost every $h$. Indeed, this is clear for the first term at \eqref{Gh1}, since by assumption $G\in B^\alpha_{p,q,loc}$. On the other hand, ($\cA4$) tells us that 
$$
\left|\frac{\cA(x+h,Du(x))-\cA(x, Du(x))}{|h|^\alpha} \right|\leq (g_k(x+h)+g_k(x))\,(\mu^2+|Du(x)|^2)^\frac{1}{2},\hspace{.5cm}\text{if }2^{-k}\leq |h|<2^{-k+1}.
$$
Above, $g_k\in L^\frac{n}{\alpha}$ by assumption. Also, $(1+|Du(x)|^2)^\frac{1}{2}\in L^{p^\ast_\alpha}_{loc}$. To see this, use Lemma \ref{embedding} with $p<\frac{n}\alpha$ and $q\leq p^\ast_\alpha$ to see that $G\in L^{p^\ast_\alpha}_{loc}$,  and deduce then that $Du\in L^{p^\ast_\alpha}_{loc}$ from Theorem \ref{mainvmo} (if $p^\ast_\alpha\geq2$) or Lemma \ref{nonstandardcaccioppoli} (if $p^\ast_\alpha<2$ we still have $p_0'<p<p^\ast_\alpha$). Hence, we obtain that $G_h\in L^p_{loc}$. \\
\\
We can use now Lemma \ref{nonstandardcaccioppoli} at \eqref{eqh}. If $B$ is a ball with $(2+|h|)B\subset\Omega$,
\begin{equation}\label{caccvh}
\|Dv_h \|_{L^p(B)}\leq C_0\left(\frac{1}{r_B}\,\|v_h \|_{L^p(2B)}+\|G_h \|_{L^p(2B)}\right),\hspace{1cm}p_0'<p<p_0
\end{equation}
where $r_B$ denotes the radius of $B$, $p_0$ is as in Lemma \ref{nonstandardcaccioppoli}, and the constant $C_0=C_0(n,p,\nu,L,s)$ does not depend on $h$. We now write the above inequality in terms of $u$, and then take $L^q$ norm with the measure $\frac{dh}{|h|^n}$ restricted to the ball $B(0,R)$ on the $h$-space. We obtain that
$$
\left\|\frac{\Delta_h Du}{|h|^\alpha} \right\|_{L^q(\frac{dh}{|h|^n};L^p(B))}\leq C_0\left(\frac1{r_B}\,\left\|\frac{\Delta_hu}{|h|^\alpha} \right\|_{L^q(\frac{dh}{|h|^n};L^p(2B) )}+\|G_h \|_{L^q(\frac{dh}{|h|^n};L^p(2B) )}\right).$$
Above, the first term on the right hand side is finite, since $Du\in L^{p^\ast_\alpha}_{loc}$. In order to estimate the last term, we write
$$
\|G_h \|_{L^q(\frac{dh}{|h|^n}; L^p(2B))}
\leq \left\| \frac{\Delta_hG}{|h|^\alpha}\right\|_{L^q(\frac{dh}{|h|^n}; L^p(2B))}+\left\| \frac{\cA(\cdot+h,Du)-\cA(\cdot, Du)}{|h|^\alpha}\right\|_{L^q(\frac{dh}{|h|^n}; L^p(2B))}
$$
Above, the first term on the right hand side is finite, since by assumption $G\in B^\alpha_{p,q,loc}$. Concerning the second term, denote $r_k=2^{-k}\,R$. We write the $L^q$ norm in polar coordinates, so $h\in B(0,R)$ if and only if $h=r\xi$ for some $0\leq r<R$ and some $\xi$ in the unit sphere $S^{n-1}$ on $\R^n$. We denote by $d\sigma(\xi)$ the surface measure on $S^{n-1}$. We bound the last term above by
$$\aligned
 \int_0^R\int_{S^{n-1}}&\left\| \frac{\cA(\cdot+r\xi,Du)-\cA(\cdot, Du)}{r^\alpha} \right\|_{L^p(2B)}^qd\sigma(\xi)\,\frac{dr}{r} \\
&= \sum_{k=0}^\infty \int_{r_{k+1}}^{r_k}\int_{S^{n-1}}\left\| \frac{\cA(\cdot+r\xi,Du)-\cA(\cdot, Du)}{r^\alpha} \right\|_{L^p(2B)}^qd\sigma(\xi)\,\frac{dr}{r} \\
&\leq 2^{-\alpha q} \sum_{k=0}^\infty \int_{r_{k+1}}^{r_k}\int_{S^{n-1}}\left\|  (\tau_{r\xi}g_k + g_k)\,(1+|Du|^2)^\frac12 \right\|_{L^p(2B)}^qd\sigma(\xi)\,\frac{dr}{r} .
\endaligned$$
Now, using again that $Du\in L^{p^\ast_\alpha}_{loc}$,  
$$\aligned
\left\|  (\tau_{r\xi}g_k + g_k)\,(1+|Du|^2)^\frac12\right\|_{L^p(2B)}
\leq \left\|(1+|Du|^2)^\frac12 \right\|_{L^\frac{np}{n-\alpha p}(2B)}\,\left\| (\tau_{r\xi}g_k+g_k) \right\|_{L^\frac{n}{\alpha}(2B)} \endaligned
$$
On the other hand, we note that for each $\xi\in S^{n-1}$ and $r_{k+1}\leq r\leq r_k$ 
$$\aligned
\|(\tau_{r\xi}g_k+g_k)\|_{L^\frac{n}{\alpha}(2B)}
\leq \|g_k\|_{L^\frac{n}\alpha(2B-r_k\xi)} +\|g_k \|_{L^\frac{n}\alpha(2B)} 
\leq 2 \|g_k \|_{L^\frac{n}\alpha(\lambda B)} 
\endaligned$$
where $\lambda =2+\frac{R}{r_B}$. Hence
$$\aligned
\left\|\frac{\cA(\cdot+h,Du)-\cA(\cdot, Du)}{|h|^\alpha} \right\|_{L^q(\frac{dh}{|h|^n}; L^p(2B))}
&\leq C(n,\alpha,q)\, 
\left\|(1+|Du|^2)^\frac12 \right\|_{L^{p^\ast_\alpha}(2B)}\, 
\| \{g_k\}_k\|_{\ell^q(L^\frac{n}\alpha(\lambda B))}\endaligned$$
where $C(n,\alpha,q)= 2^{1-\alpha}\,\log2\,\sigma(S^{n-1})^\frac1q$. Summarizing,
$$\aligned
\frac1{C_0}\left\|\frac{\Delta_h Du}{|h|^\alpha} \right\|_{L^q(\frac{dh}{|h|^n};L^p(2B))}
&\leq \frac1{r_B}\,\left\|\frac{\Delta_hu}{|h|^\alpha} \right\|_{L^q(\frac{dh}{|h|^n};L^p(2B) )}+
\left\|\frac{\Delta_h G}{|h|^\alpha} \right\|_{L^q(\frac{dh}{|h|^n}; L^p(2B))}\\
&+C(n,\alpha,q)\,\|(1+|Du|^2)^\frac12\|_{L^{p^\ast_\alpha}(2B)}\,\|\{ g_k\}_k\|_{\ell^q(L^\frac{n}\alpha(\lambda B))} 
\endaligned$$
Lemma \ref{localbesov2} now guarantees that $Du\in B^\alpha_{p,q,loc}$ and this concludes the proof.
\end{proof}

\noindent
The proofs of Theorems \ref{mainBesovhomog} and \ref{mainBesovlinear} are almost the same. 

\begin{proof}[Proof of Theorem \ref{mainBesovhomog}]
Arguing again as in the proof of Theorem \ref{mainBesov}, the fact that $G=0$ now tells us that $q\leq p^\ast_\alpha$ is not needed to conclude that $G_h\in L^p_{loc}$ for every single $p<\frac{n}\alpha$, due to Theorem \ref{mainvmo}. As a consequence, \eqref{caccvh} holds for every $p<\min\{p_0,\frac{n}{\alpha}\}$. The rest of the proof follows in the same way.
\end{proof}

\begin{proof}[Proof of Theorem \ref{mainBesovlinear}]
Arguing again as in the Proof of Theorem \ref{mainBesov},  the new equation $\cA_h$ is now linear with $VMO$ coefficients, due to the linearity of $\cA(x,\xi)$ as a function of $\xi$. Also, from $\max\{1,\frac{nq}{n+\alpha q}\}< p<\frac{n}\alpha$ we have $q\leq p^\ast_\alpha<\infty$ and so $G\in L^{p^\ast_\alpha}_{loc}$ implies $Du\in L^{p^\ast_\alpha}_{loc}$ by the results at \cite{IS}. Hence, $G_h$ has an $L^p_{loc}$ majorant, and thus $Dv_h\in L^p_{loc}$ again by \cite{IS}, since $p>1$.  In particular, the restriction $p<\min\{p_0,\frac{n}\alpha\}$ can be replaced by $p<\frac{n}{\alpha}$, and the restriction $p>p_0'$ can be replaced by $p>\max\{1,\frac{nq}{n+\alpha q}\}$. The rest of the proof follows similarly.
\end{proof}

A. L. Bais\'on, A. Clop, J. Orobitg\\
Departament de Matem\`atiques\\
Universitat Aut\`onoma de Barcelona\\
08193-Bellaterra (CATALONIA)

R. Giova\\
Dipartimento di Studi Economici e Giuridici\\
Universit\`a degli Studi di Napoli ``Parthenope" \\
Palazzo Pacanowsky- Via Generale Parisi, 13\\
80123 Napoli (Italy)

A. Passarelli di Napoli\\
Dipartimento di Matematica e Appl. ``R.Caccioppoli"\\
Universit\`a degli Studi di Napoli ``Federico II" \\
Via Cintia, 80126 Napoli (Italy)
\end{document}